\declaretheoremstyle[
  spaceabove = 3pt,
  spacebelow = 3pt,
]{lecture}
\theoremstyle{lecture}
\declaretheorem[numberwithin = section]{theorem}
\declaretheorem[sibling = theorem]{corollary}
\declaretheorem[sibling = theorem]{definition}
\declaretheorem[sibling = theorem]{example}
\declaretheorem[sibling = theorem]{lemma}
\declaretheorem[sibling = theorem]{proposition}
\declaretheorem[sibling = theorem]{remark}
\declaretheorem[sibling = theorem]{convention}
\def\gitfootnote{\gdef\@thefnmark{}\@footnotetext}
\crefname{convention}{convention}{conventions}
\mathchardef\mhyphen="2D
\newcommand\dash{\nobreakdash-\hspace{0pt}}
\newcommand\bounded{\ensuremath{\mathrm{b}}}
\newcommand\compact{\ensuremath{\mathrm{c}}}
\newcommand\LLL{\ensuremath{\mathbf{L}}}
\newcommand\opp{\ensuremath{\mathrm{op}}}
\newcommand\perf{\ensuremath{\mathrm{perf}}}
\newcommand\RRR{\ensuremath{\mathbf{R}}}
\DeclareMathOperator\Br{Br}
\DeclareMathOperator\Catmod{Mod}
\DeclareMathOperator\catmod{mod}
\DeclareMathOperator\CH{CH}
\DeclareMathOperator\Cl{Cl}
\DeclareMathOperator\codim{codim}
\DeclareMathOperator\coh{coh}
\DeclareMathOperator\Cyc{Z}
\DeclareMathOperator\derived{\mathbf{D}}
\DeclareMathOperator\Div{Div}
\DeclareMathOperator\End{End}
\DeclareMathOperator\fl{fl}
\DeclareMathOperator\HH{H}
\DeclareMathOperator\Hom{Hom}
\DeclareMathOperator\sheafHom{\mathcal{H}\mathit{om}}
\DeclareMathOperator\identity{id}
\DeclareMathOperator\im{im}
\DeclareMathOperator\Inj{Inj}
\DeclareMathOperator\Kzero{K_0}
\DeclareMathOperator\Kone{K_1}
\DeclareMathOperator\Mat{Mat}
\DeclareMathOperator\Ob{Ob}
\DeclareMathOperator\Pic{Pic}
\DeclareMathOperator\Qcoh{Qcoh}
\DeclareMathOperator\rad{rad}
\DeclareMathOperator\supp{supp}
\DeclareMathOperator\Supp{Supp}
\DeclareMathOperator\Spc{Spc}
\DeclareMathOperator\relSpec{\mathbf{Spec}}
\DeclareMathOperator\Spec{Spec}
\DeclareMathOperator\ZZ{Z}
\title{Relative tensor triangular Chow groups for coherent algebras}
\author{Pieter Belmans \and Sebastian Klein}
\begin{document}
\maketitle

\begin{abstract}
  We apply the machinery of relative tensor triangular Chow groups to the action of $\derived(\Qcoh(X))$, the derived category of quasi-coherent sheaves on a noetherian scheme $X$, on the derived category of quasi-coherent $\mathcal{A}$\dash modules $\derived(\Qcoh(\mathcal{A}))$, where $\mathcal{A}$ is a (not necessarily commutative) coherent~$\mathcal{O}_X$\dash algebra. When $\mathcal{A}$ is commutative, we recover the tensor triangular Chow groups of $\mathbf{Spec}(\mathcal{A})$. We also obtain concrete descriptions for integral group algebras and hereditary orders over curves, and we investigate the relation of these invariants to the classical ideal class group of an order. An important tool for these computations is a new description of relative tensor triangular Chow groups as the image of a map in the K-theoretic localization sequence associated to a certain Verdier localization.
\end{abstract}

%\gitfootnote{commit: \texttt{\gitAbbrevHash}\hfil date: \texttt{\gitAuthorIsoDate}\hfil \texttt{\gitReferences}}

\tableofcontents

\section{Introduction}
In~\cite{1510.00211}, Klein defined and began the study of \emph{relative tensor triangular Chow groups}, a family of~K\dash theoretic invariants attached to a compactly generated triangulated category~$\mathcal{K}$ with an action of a rigidly-compactly generated tensor triangulated category~$\mathcal{T}$ in the sense of~\cite{MR3181496}. While in~\cite{1510.00211}, they were used to improve upon and extend results of~\cite{MR3423452}, the initial observation of the present work is that they allow us to enter the realm of \emph{noncommutative} algebraic geometry: if~$X$ is a noetherian scheme and~$\mathcal{A}$ a (possibly noncommutative) coherent~$\mathcal{O}_X$-algebra, then the derived category~$\mathcal{K}\coloneqq\derived(\Qcoh(\mathcal{A}))$ admits an action by~$\mathcal{T}\coloneqq\derived(\Qcoh(\mathcal{O}_X))$ which is obtained by deriving the tensor product functor
\begin{equation}
  \begin{aligned}
	  \Qcoh(\mathcal{A}) \times \Qcoh(\mathcal{O}_X)
    &\to \Qcoh(\mathcal{A}) \\
	  (M,F)
    &\mapsto M \otimes_{\mathcal{O}_X} F.
  \end{aligned}
\end{equation}
In this situation, the general machinery of~\cite{1510.00211} gives us abelian groups~$\ZZ^{\Delta}_{i}(X,\mathcal{A})$ and~$\CH^{\Delta}_i(X,\mathcal{A})$, the dimension~$i$ tensor triangular cycle and Chow groups of~$\mathcal{K}$ relative to the action of~$\mathcal{T}$. In the test case where~$\mathcal{A}$ is coherent and commutative, and hence~$\mathcal{A}$ corresponds to a scheme~$\relSpec\mathcal{A}$ and a finite morphism~$\relSpec\mathcal{A}\to X$, we show that~$\ZZ^{\Delta}_{i}(X,\mathcal{A})$ and~$\CH^{\Delta}_i(X,\mathcal{A})$ agree with the dimension~$i$ tensor triangular cycle and Chow groups of~$\derived^\bounded(\relSpec\mathcal{A})$ as defined in~\cite{MR3423452}, and hence with the usual dimension~$i$ cycle and Chow groups of~$\ZZ_i(\relSpec\mathcal{A}),\CH_i(\relSpec\mathcal{A})$ when~$\relSpec\mathcal{A}$ is a regular algebraic variety (see \cref{thmcommrecover}). This computation serves as a motivation to study the groups~$\ZZ^{\Delta}_{i}(X,\mathcal{A})$ and~$\CH^{\Delta}_i(X,\mathcal{A})$ for noncommutative coherent~$\mathcal{A}$.

We obtain computations of both invariants when~$\mathcal{A}$ is a sheaf of hereditary orders on a curve in \cref{sectionorders}, and in particular~$\CH^{\Delta}_i(X,\mathcal{A})$ recovers the classical \emph{stable class group} in this case. We also briefly touch upon the subjects of maximal orders on a surface and orders over a singular base, in the context of noncommutative resolutions of singularities. The case of a finite group algebra over~$\Spec\mathbb{Z}$ is discussed as a final example.

Let us highlight that the main ingredient for the calculations carried out in this article is a new exact sequence which is established in \cref{sectionexseq} for a general  rigidly-compactly generated tensor triangulated category~$\mathcal{T}$ acting on a compactly generated triangulated category~$\mathcal{K}$, and for the case~$\mathcal{K}\coloneqq\derived(\Qcoh(\mathcal{A})), \mathcal{T}\coloneqq\derived(\Qcoh(\mathcal{O}_X))$ gives
\begin{equation}
  0 \to \CH^{\Delta}_{p}(X, \mathcal{A}) \to \Kzero\left(\left(\mathcal{K}_{(p+1)}/\mathcal{K}_{(p-1)}\right)^\compact\right) \to \ZZ^{\Delta}_{p+1}(X, \mathcal{A}).
\end{equation}
The middle term of the sequence is the Grothendieck group of the subcategory of compact objects of a subquotient of the filtration of~$\mathcal{K}$ by dimension of support in~$\Spc(\mathcal{T}^\compact)$.

The article is structured as follows: in \cref{sectionttprelims} we recall all relevant notions from tensor triangular geometry and the definition of relative tensor triangular cycle and Chow groups. We then establish the exact sequence mentioned above in \cref{sectionexseq}. In \cref{sectiondercatalg} we prove some auxilliary results concerning the categories~$\derived(\Qcoh(\mathcal{A}))$ and~$\derived^\bounded(\coh(\mathcal{A}))$, most of which should be known to the experts. In \cref{sectionrelgroupsalg} we discuss the action of~$\derived(\Qcoh(\mathcal{O}_X))$ on~$\derived(\Qcoh(\mathcal{A}))$ and contemplate the definition of tensor triangular cycle and Chow groups in this more specific context, including a map~$\CH^{\Delta}_i(X,\mathcal{A}) \to \CH_i(X)$ for regular~$X$, induced by the forgetful functor~$\derived(\Qcoh(\mathcal{A})) \to \derived(\Qcoh(\mathcal{O}_X))$. We then have a look at commutative coherent~$\mathcal{O}_X$-algebras in \cref{sectioncommcohalg} and carry out our computations for orders in \cref{sectionorders}. The results in \cref{sectionorders} motivate the study of relative Chow groups for coherent~$\mathcal{O}_X$\dash algebras in general, by showing that they agree with various invariants in the literature which were defined in an ad hoc way.

\section{Tensor triangular preliminaries}
\label{sectionttprelims}
In this section we recall the categorical notions we need. None of the following material is new, our main sources are~\cite{MR2827786,MR2806103,MR3181496,1510.00211}.
\subsection{Tensor triangular geometry}
\label{subsection:ttgeometry}
Let us quickly recall the basics of Balmer's tensor triangular geometry. See e.g.~\cite{MR2827786} for a reference that covers all the material we need (and much more).
\begin{definition}
	A \emph{tensor triangulated category} is an essentially small triangulated category~$\mathcal{C}$ equipped with a symmetric monoidal structure~$\otimes$ with unit~$\mathbb{I}$ such that the functors~$a \otimes -$ are exact for all objects~$a \in \mathcal{C}$.
	\label{dfnsmallttcat}
\end{definition}

To every tensor triangulated category~$\mathcal{C}$, we associate its \emph{Balmer spectrum}~$\Spc(\mathcal{C})$, a topological space that is constructed in analogy with the prime ideal spectrum of a commutative ring. By construction of~$\Spc(\mathcal{C})$, every object~$a \in \mathcal{C}$ has a closed \emph{support}~$\supp(a) \subset \Spc(\mathcal{C})$, which satisfies the identities
\begin{itemize}
	\item $\supp(0) = \emptyset$ and~$\supp(\mathbb{I}) = \Spc(\mathcal{C})$,
	\item $\supp(\Sigma a) = \supp(a)$,
	\item $\supp(a \oplus b) = \supp(a) \cup \supp(b)$,
	\item $\supp(a \otimes b) = \supp(a) \cap \supp(b)$,
	\item $\supp(b) \subset \supp(a) \cup \supp(c)$ whenever there is a distinguished triangle
    \begin{equation}
	    a \to b \to c \to \Sigma a.
    \end{equation}
\end{itemize}
for all objects~$a,b,c \in \mathcal{C}$. One can show that, in a precise sense, the space~$\Spc(\mathcal{C})$ and the support function~$\supp$ are optimal among all pairs of spaces and support functions satisfying the above criteria.
\begin{example}
	If~$X$ is a quasi-compact, quasi-separated scheme, then~$\mathcal{C} = \derived^\perf(X)$, the derived category of perfect complexes on~$X$, is a tensor triangulated category with tensor product~$\otimes_{\mathcal{O}_X}^\LLL$. We have~$\Spc(\mathcal{C}) \cong X$ and under this identification the support~$\supp(C^{\bullet})$ of some complex~$C^{\bullet}$ is identified with the complement of the set of points~$x \in X$ such that~$C^{\bullet}_x$ is acyclic, or equivalently with the support of the total cohomology sheaf~$\HH^\bullet(C^{\bullet})\coloneqq\bigoplus_{i}\HH^i(C^{\bullet})$.
	\label{exschemereconstruct}
\end{example}
The spectrum~$\Spc(\mathcal{C})$ is always a \emph{spectral} topological space, i.e.~it is homeomorphic to the prime ideal spectrum of some (usually unknown) commutative ring.
%\footnote{In order to prove this statement, one uses that spectral spaces have been characterized among all spaces in purely topological terms by Hochster.}
Hence, it makes sense to talk about the Krull (co)-dimension of points in~$\Spc(\mathcal{C})$. For a subset~$S \subset \Spc(\mathcal{C})$, we define
\begin{equation}
  \dim(S)\coloneqq\max_{P \in S} \dim(P) \quad \text{and} \quad \codim (S) \coloneqq \min_{P \in S} \codim(P),
\end{equation}
where we set~$\dim(\emptyset) = - \infty, \codim(\emptyset) = \infty$.

\subsection{Supports in large categories}
Let~$\mathcal{T}$ be a triangulated category.
\begin{definition}
	The category~$\mathcal{T}$ is called a \emph{rigidly-compactly generated tensor triangulated category} if
	\begin{enumerate}[label=(\roman*)]
		\item \emph{$\mathcal{T}$ is compactly generated.}
      We implicitly assume here that~$\mathcal{T}$ has set-indexed coproducts. Note that this implies that~$\mathcal{T}$ is not essentially small.

		\item \emph{$\mathcal{T}$ is equipped with a \emph{compatible closed symmetric monoidal structure}
			\begin{equation}
        \otimes: \mathcal{T} \times \mathcal{T} \to \mathcal{T}
      \end{equation}
			with unit object~$\mathbb{I}$}. Here, a symmetric monoidal structure on~$\mathcal{T}$ is \emph{closed} if for all objects~$A \in \mathcal{T}$ the functor~$A \otimes -$ has a right adjoint~$\underline{\hom}(A,-)$. Note that this condition implies that~$\otimes$ preserves coproducts in both variables. A \emph{compatible} closed symmetric monoidal structure on~$\mathcal{T}$ is one such that the functor~$\otimes$ is exact in both variable and such that the two ways of identifying~$\Sigma(x) \otimes \Sigma(y)$ with~$\Sigma^2(x \otimes y)$ are the same up to a sign. Since adjoints of exact functors are exact (see~\cite[lemma~5.3.6]{MR1812507}) we automatically have that the functor~$\underline{\hom}(A,-)$ is exact for all objects~$A \in \mathcal{T}$.

		\item \emph{$\mathbb{I}$ is compact and all compact objects of~$\mathcal{T}$ are rigid.}
      Let~$\mathcal{T}^\compact \subset \mathcal{T}$ denote the full subcategory of compact objects of~$\mathcal{T}$. Then we require that~$\mathbb{I} \in \mathcal{T}^\compact$ and that all objects~$A$ of~$\mathcal{T}^\compact$ are rigid, i.e.\ for every object~$B \in \mathcal{T}$ the natural map
		  \begin{equation}
        \underline{\circ}: \underline{\hom}(A,\mathbb{I}) \otimes B \cong \underline{\hom}(A,\mathbb{I}) \otimes  \underline{\hom}(\mathbb{I},B) \to \underline{\hom}(A,B),
      \end{equation}
		  is an isomorphism.
	\end{enumerate}
\end{definition}
The subcategory~$\mathcal{T}^\compact$ of a rigidly-compactly generated tensor triangulated category~$\mathcal{T}$ is a tensor triangulated category in the sense of \cref{dfnsmallttcat}. Hence, it makes sense to talk about the spectrum~$\Spc(\mathcal{T}^\compact)$.
\begin{convention}
	Throughout this section we assume that~$\mathcal{T}$ is a compactly-rigidly generated tensor triangulated category.\emph{ We also assume that~$\Spc(\mathcal{T}^\compact)$ is a noetherian topological space.}
	\label{convbasiccat}
\end{convention}

\begin{example}
	If~$X$ is a quasi-compact, quasi-separated scheme, then~$\mathcal{T} = \derived_{Qcoh}(X)$, the derived category of complexes of~$\mathcal{O}_X$-modules with quasi-coherent cohomology is a rigidly-compactly generated tensor triangulated category with tensor product~$\otimes_{\mathcal{O}_X}^\LLL$. The rigid-compact objects are the perfect complexes in~$\mathcal{T}$. By \cref{exschemereconstruct},~$\Spc(\mathcal{T}^\compact) = X$ and the condition of \cref{convbasiccat} hence holds whenever the space~$|X|$ is noetherian, e.g.\ when~$X$ is noetherian. If~$X$ is noetherian and separated,~$\derived_{\Qcoh}(X)$ is equivalent to~$\derived(\Qcoh(\mathcal{O}_X))$.
\end{example}

Rigidly-compactly generated tensor triangulated categories come with an associated support theory that extends the notion of support in an essentially small tensor triangulated category. Let us briefly review the theory as introduced in~\cite{MR2806103}. First recall the concepts of Bousfield and smashing subcategories:
\begin{definition}
	A thick triangulated subcategory~$\mathcal{I} \subset \mathcal{T}$ is \emph{Bousfield} if the Verdier quotient functor~$\mathcal{T} \to \mathcal{T}/\mathcal{I}$ exists and has a right adjoint.  A Bousfield subcategory~$\mathcal{I} \subset \mathcal{T}$ is called \emph{smashing} if the right adjoint of the Verdier quotient functor~$\mathcal{T} \to \mathcal{T}/\mathcal{I}$ preserves coproducts.
\end{definition}
If~$\mathcal{I}$ is a Bousfield subcategory, there exists a localization functor~$L_{\mathcal{I}}: \mathcal{T} \to \mathcal{T}$ (given as the composition of the Verdier quotient~$\mathcal{T} \to \mathcal{T}/\mathcal{I}$ and its right adjoint) such that~$\mathcal{I} = \ker(L_{\mathcal{I}})$ and the composition of functors
\begin{equation}
  \mathcal{I}^{\perp} \rightarrow \mathcal{T} \rightarrow \mathcal{T}/\mathcal{I}
\end{equation}
is an exact equivalence, where~$\mathcal{I}^{\perp}$ is the full subcategory consisting of those~$t \in \Ob(\mathcal{T})$ such that~$\Hom(c,t) = 0$ for all~$c \in \Ob(\mathcal{I})$. A quasi-inverse of the equivalence is given by the right adjoint of the Verdier quotient functor~$\mathcal{T} \to \mathcal{T}/\mathcal{I}$. This says that we can actually realize the Verdier quotient~$\mathcal{T}/\mathcal{I}$ inside of~$\mathcal{T}$ and we will freely (and slightly abusively) confuse~$\mathcal{T}/\mathcal{I}$ with~$\mathcal{I}^{\perp}$. Also recall, that for every object~$a \in \mathcal{T}$ there is a distinguished \emph{localization triangle}
\begin{equation}
  \Gamma_{\mathcal{I}}(a) \to a \to L_{\mathcal{I}}(a) \to \Sigma(\Gamma_{\mathcal{I}}(a))
\end{equation}
which is unique among triangles~$x \to a \to y \to \Sigma(x)$ with~$x \in \mathcal{I}$ and~$y \in \mathcal{I}^{\perp}$, up to unique isomorphism of triangles that restrict to the identity on~$a$. This defines a functor~$\Gamma_{\mathcal{I}}(-)$ on~$\mathcal{T}$ with essential image~$\mathcal{I}$. The functor~$\Gamma_{\mathcal{I}}$ is a \emph{colocalization functor}, i.e.~$\Gamma_{\mathcal{I}}^{\opp}$ is a localization functor on~$\mathcal{T}^{\opp}$.

\begin{definition}
	A triangulated subcategory~$\mathcal{I} \subset \mathcal{T}$ is called
	\begin{itemize}
		\item \emph{$\otimes$-ideal} if~$\mathcal{T} \otimes \mathcal{I} \subseteq \mathcal{I}$.
		\item \emph{smashing ideal} if it is a~$\otimes$-ideal, a Bousfield subcategory and~$\mathcal{I}^{\perp} \subset \mathcal{T}$ is also a~$\otimes$-ideal.
	\end{itemize}
\end{definition}

Smashing ideals are well-behaved: as they are Bousfield subcategories there exists a unique triangle
\begin{equation}
  \Gamma_{\mathcal{I}}(\mathbb{I}) \to \mathbb{I} \to L_{\mathcal{I}}(\mathbb{I}) \to \Sigma(\Gamma_{\mathcal{I}}(\mathbb{I})),
\end{equation}
and by tensoring this triangle with~$a \in \mathcal{T}$, we see that we must have~$L_{\mathcal{I}}(a) = L_{\mathcal{I}}(\mathbb{I}) \otimes a$ and~$\Gamma_{\mathcal{I}}(a) = \Gamma_{\mathcal{I}}(\mathbb{I}) \otimes a$.
\begin{remark}
	Smashing ideals are smashing subcategories:~$L_{\mathcal{I}} =  \Gamma_{\mathcal{I}}(\mathbb{I}) \otimes -$ preserves coproducts since it has a right adjoint by definition of a rigidly-compactly generated tensor triangulated category. It follows that the Verdier quotient functor~$\mathcal{T} \to \mathcal{T}/\mathcal{I}$ must preserve coproducts as well.
\end{remark}

An important tool for extending the notion of support from~$\mathcal{T}^\compact$ to~$\mathcal{T}$ is the following theorem:
\begin{theorem}[see {\cite[theorem 3.3.3]{MR1388895}}]
	Let~$\mathcal{S} \subset \mathcal{T}^\compact$ be a thick~$\otimes$-ideal in~$\mathcal{T}^\compact$ (i.e.~$\mathcal{T}^\compact \otimes \mathcal{S} \cong \mathcal{S}$). Let~$\langle \mathcal{S} \rangle$ denote the smallest triangulated subcategory of~$\mathcal{T}$ that is closed under taking arbitrary coproducts (in~$\mathcal{T}$). Then~$\langle \mathcal{S} \rangle$ is a smashing ideal in~$\mathcal{T}$ and~$\langle\mathcal{S}\rangle^\compact \cong \mathcal{S}$.
	\label{thmmiller}
\end{theorem}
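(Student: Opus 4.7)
The plan is to establish compact generation of $\langle\mathcal{S}\rangle$ together with the identification $\langle\mathcal{S}\rangle^c=\mathcal{S}$, use Brown representability to obtain the Bousfield property, and then verify the two $\otimes$-ideal conditions separately. The rigidity hypothesis on $\mathcal{T}$ will only be invoked at the very end.

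First I would observe that $\langle\mathcal{S}\rangle$ is the localizing subcategory of $\mathcal{T}$ generated by the set $\mathcal{S}\subset\mathcal{T}^c$. Since the inclusion $\iota\colon\langle\mathcal{S}\rangle\hookrightarrow\mathcal{T}$ preserves arbitrary coproducts, every object of $\mathcal{S}$ is still compact when viewed in $\langle\mathcal{S}\rangle$, so $\langle\mathcal{S}\rangle$ is compactly generated. Neeman's version of the Thomason-Yao theorem then identifies $\langle\mathcal{S}\rangle^c$ with the thick closure of $\mathcal{S}$ taken inside $\langle\mathcal{S}\rangle$; because $\mathcal{S}$ is already thick in $\mathcal{T}^c$ and thickness is detected by triangles and direct summands computed in the ambient category $\mathcal{T}$, this closure equals $\mathcal{S}$. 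With compact generation in hand, Brown representability yields a right adjoint for $\iota$, and hence for the Verdier quotient $\mathcal{T}\to\mathcal{T}/\langle\mathcal{S}\rangle$, which gives the Bousfield property.

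The $\otimes$-ideal property of $\langle\mathcal{S}\rangle$ itself is a routine density argument: for fixed $s\in\mathcal{S}$, the full subcategory of those $a\in\mathcal{T}$ with $a\otimes s\in\langle\mathcal{S}\rangle$ is triangulated and closed under coproducts (since $-\otimes s$ is exact and preserves coproducts), and it contains $\mathcal{T}^c$ because $\mathcal{S}$ is a $\otimes$-ideal in $\mathcal{T}^c$; since $\mathcal{T}^c$ generates $\mathcal{T}$ as a localizing subcategory, the subcategory equals $\mathcal{T}$. A symmetric argument in the second variable then upgrades $\mathcal{T}\otimes\mathcal{S}\subseteq\langle\mathcal{S}\rangle$ to $\mathcal{T}\otimes\langle\mathcal{S}\rangle\subseteq\langle\mathcal{S}\rangle$.

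The main obstacle, and the only step that genuinely uses rigidity of compact objects in $\mathcal{T}$, is to prove that $\langle\mathcal{S}\rangle^\perp$ is also a $\otimes$-ideal. Given $y\in\langle\mathcal{S}\rangle^\perp$ and $a\in\mathcal{T}$, I would reduce — using compactness of each $s\in\mathcal{S}$ and the fact that $\mathcal{S}$ generates $\langle\mathcal{S}\rangle$ as a localizing subcategory — to showing $\Hom(s,a\otimes y)=0$ for every $s\in\mathcal{S}$. For $a\in\mathcal{T}^c$, the dual $a^\vee$ exists and lies in $\mathcal{T}^c$, and rigidity provides the adjunction isomorphism $\Hom(s,a\otimes y)\cong\Hom(a^\vee\otimes s,y)$; since $a^\vee\otimes s\in\mathcal{S}$ by the $\otimes$-ideal property of $\mathcal{S}$ inside $\mathcal{T}^c$, this Hom vanishes. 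One more density argument in the variable $a$ — the full subcategory on which $\Hom(s,a\otimes y)=0$ is triangulated and closed under coproducts because $s$ is compact in $\mathcal{T}$ — extends the vanishing to all $a\in\mathcal{T}$, and the theorem follows.
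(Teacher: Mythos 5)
Your argument is correct; note that the paper does not prove this statement at all but simply cites it from Hovey--Palmieri--Strickland, and what you have written is essentially the standard proof from that reference (and Neeman's localization theorem): compact generation of $\langle\mathcal{S}\rangle$ by $\mathcal{S}$ plus Brown representability for the Bousfield property, density arguments over the compact generators for the ideal conditions, and rigidity only to show $\langle\mathcal{S}\rangle^{\perp}$ is an ideal via $\Hom(s,a\otimes y)\cong\Hom(a^{\vee}\otimes s,y)$. The only points left tacit, both standard, are that a triangulated subcategory closed under coproducts is automatically thick (B\"okstedt--Neeman), so that $\langle\mathcal{S}\rangle$ is Bousfield in the paper's sense, and that the thick closure of $\mathcal{S}$ computed in $\langle\mathcal{S}\rangle$ agrees with the one in $\mathcal{T}$ because triangles and summands are inherited from the ambient category.
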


\begin{definition}
	Let~$V \subset \Spc(\mathcal{T}^\compact)$ be a specialization-closed subset. We denote by~$\mathcal{T}_{V}$ the smashing ideal~$\langle \mathcal{T}^\compact_V \rangle$, where~$\mathcal{T}^\compact_V \subset \mathcal{T}^\compact$ is the thick~$\otimes$-ideal~$\lbrace a \in \mathcal{T}^\compact: \supp(a) \subset V \rbrace$. We denote the two associated localization and acyclization functors by~$L_V$ and~$\Gamma_V$.
	\label{deftelescopelocacyc}
\end{definition}

Now let~$x \in \Spc(\mathcal{T}^\compact)$ be a point. The sets~$\overline{\lbrace x \rbrace}$ and~$Y_x\coloneqq \lbrace y: x \notin \overline{\lbrace y \rbrace}\rbrace$ are both specialization-closed.
\begin{definition}[see \cite{MR2806103}]
	Let~$x \in \Spc(\mathcal{T}^\compact)$ and let~$\Gamma_x$ denote the functor given as the composition~$L_{Y_x}\Gamma_{\overline{\lbrace x \rbrace}}$. Then, for an object~$a \in \mathcal{T}$, we define its \emph{support} as
	\begin{equation}
    \supp(a) \coloneqq \lbrace x \in  \Spc(\mathcal{T}^\compact) : \Gamma_x(a) \neq 0\rbrace.
  \end{equation}
	\label{dfnbigsupport}
\end{definition}

\begin{example}[see {\cite{MR3177268}}]
	Suppose~$X = \Spec A$ is an affine scheme with~$A$ a noetherian ring. Then~$\derived_{\Qcoh}(\Spec A) \cong \derived(\Catmod(A))$ and
	\begin{equation}
    \Spc(\derived(\Catmod(A))^\compact) = \Spc(\derived^{\mathrm{perf}}(A)) = \Spec A.
  \end{equation}
	Let~$\mathfrak{p} \in \Spec A$ be a prime ideal. Then the functor~$\Gamma_{\mathfrak{p}}$ is given as~$\mathrm{K}_{\infty}(\mathfrak{p}) \otimes A_{\mathfrak{p}} \otimes -$, where~$\mathrm{K}_{\infty}(\mathfrak{p})$ is the \emph{stable Koszul complex} of the prime ideal~$\mathfrak{p}$. In particular, if~$\Supp(C^{\bullet})$ denotes the complement of the set of points where~$C^{\bullet}$ is acyclic, then we see that~$\supp(C^{\bullet}) \subset \Supp(C^{\bullet})$. The set~$\supp(C^{\bullet})$ is sometimes known as the \emph{small support} of~$C^{\bullet}$ and coincides with the set of prime ideals~$\mathfrak{p}$ such that~$k(\mathfrak{p}) \otimes^\LLL C^{\bullet} \neq 0$.
\end{example}

\begin{remark}
	In comparison to the notion of support of an essentially small tensor triangulated category, the support of an object of~$\mathcal{T}$ is still a well-behaved construction. For example, we have~$\supp(\bigoplus_i a_i) = \bigcup_i \supp(a_i)$, but~$\supp(a)$ needs not be closed. If~$a \in \mathcal{T}^\compact$, then~$\supp(a)$ coincides with the notion of support from \cref{subsection:ttgeometry} and hence it will be closed.
\end{remark}

\subsection{Relative supports and tensor triangular Chow groups}
\label{subsection:relative-tensor-triangular-chow-groups}
We shall now adapt to a situation where we consider triangulated categories~$\mathcal{K}$ that don't necessarily have a symmetric monoidal structure themselves, but rather admit an \emph{action} by a tensor triangulated category~$\mathcal{T}$. Let us recall from~\cite{MR3181496} what it means for~$\mathcal{T}$ to have an action~$\ast$ on~$\mathcal{K}$.

We are given a biexact bifunctor
\begin{equation}
  \ast: \mathcal{T} \times \mathcal{K} \to \mathcal{K}
\end{equation}
that commutes with coproducts in both variables, whenever they exist. Furthermore we are given natural isomorphisms
\begin{equation}
	\begin{aligned}
		\alpha_{x,y,a}:& (X \otimes Y) \ast a \overset{\sim}{\longrightarrow} x \ast (y \ast a)\\
		l_a:& \mathbb{I} \ast a \overset{\sim}{\longrightarrow} a
	\end{aligned}
	\label{eqnassocunitiso}
\end{equation}
for all objects~$x,y \in \mathcal{T}, a \in \mathcal{K}$. These natural isomorphisms should satisfy a list of natural coherence relations that we omit here, but rather refer the reader to~\cite{MR3181496}.
\begin{example}
	Any rigidly-compactly generated tensor triangulated category has an action on itself via its monoidal structure.
\end{example}

Let us now assume that we are given a tensor triangulated category~$\mathcal{T}$ with an action~$\ast$ on a triangulated category~$\mathcal{K}$, where~$\mathcal{K}$ is assumed to be compactly generated as well (and so we implicitly mean that it has all coproducts). As in the previous section, we still assume that~$\Spc(\mathcal{T}^\compact)$ is a noetherian topological space. Let us first describe a procedure to construct smashing subcategories of~$\mathcal{K}$.
\begin{lemma}
	Suppose~$V \subset \Spc(\mathcal{T})$ is a specialization-closed subset. Then the full subcategory
	\begin{equation}
    \Gamma_V(\mathbb{I}) \ast \mathcal{K} = \lbrace a \in \mathcal{K}: a \cong \Gamma_V(\mathbb{I}) \ast b~\text{for some}~b \in \mathcal{K}\rbrace
  \end{equation}
	is smashing. The corresponding localization and colocalization functors are given by~$L_V(\mathbb{I}) \ast -$ and~$\Gamma_V(\mathbb{I}) \ast -$, respectively.
	\label{lmarelativesmash}
\end{lemma}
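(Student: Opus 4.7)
The plan is to construct, for each $a\in\mathcal{K}$, a candidate Bousfield localization triangle for $\mathcal{I}:=\Gamma_V(\mathbb{I})\ast\mathcal{K}$ by applying the action $-\ast a$ to the localization triangle $\Gamma_V(\mathbb{I})\to\mathbb{I}\to L_V(\mathbb{I})\to\Sigma\Gamma_V(\mathbb{I})$ in $\mathcal{T}$, obtaining
\[
\Gamma_V(\mathbb{I})\ast a\to a\to L_V(\mathbb{I})\ast a\to\Sigma(\Gamma_V(\mathbb{I})\ast a).
\]
The first term manifestly lies in $\mathcal{I}$, so the crux is to verify $L_V(\mathbb{I})\ast a\in\mathcal{I}^\perp$. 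Once this is done, uniqueness of Bousfield triangles identifies the (co)localization functors as $L_V(\mathbb{I})\ast-$ and $\Gamma_V(\mathbb{I})\ast-$, and the smashing property follows because $L_V(\mathbb{I})\ast-$ commutes with coproducts by the assumption that $\ast$ is coproduct-preserving in the second variable.

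Before attacking the orthogonality, I would transfer the $\otimes$-idempotence and mutual annihilation of $\Gamma_V(\mathbb{I})$ and $L_V(\mathbb{I})$ from $\mathcal{T}$ to the action on $\mathcal{K}$: tensoring the triangle in $\mathcal{T}$ with $L_V(\mathbb{I})$ yields $\Gamma_V(\mathbb{I})\otimes L_V(\mathbb{I})\cong 0$ and makes $L_V(\mathbb{I})\to L_V(\mathbb{I})\otimes L_V(\mathbb{I})$ an isomorphism. Via the associator $\alpha$ and unitor $l$ of the action, these give $L_V(\mathbb{I})\ast(\Gamma_V(\mathbb{I})\ast b)\cong 0$ for every $b\in\mathcal{K}$, and make the component $\eta_c\colon c\to L_V(\mathbb{I})\ast c$ of the natural transformation induced by $\mathbb{I}\to L_V(\mathbb{I})$ an isomorphism whenever $c\in L_V(\mathbb{I})\ast\mathcal{K}$. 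In particular $\mathcal{I}$ coincides with $\ker(L_V(\mathbb{I})\ast-)$ (one inclusion is the first displayed vanishing, the other is immediate from the triangle), which already makes it thick and closed under coproducts.

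The key step, and the one I expect to be the main obstacle because the naive long exact sequence applied to the above triangle loops back to what one wants to prove, is the vanishing $\Hom_\mathcal{K}(\Gamma_V(\mathbb{I})\ast b,L_V(\mathbb{I})\ast a)=0$. The trick is to use naturality of $\eta$: for any such morphism $f$,
\[
\eta_{L_V(\mathbb{I})\ast a}\circ f=(L_V(\mathbb{I})\ast f)\circ\eta_{\Gamma_V(\mathbb{I})\ast b}.
\]
The right-hand side vanishes because the domain $L_V(\mathbb{I})\ast\Gamma_V(\mathbb{I})\ast b$ of $L_V(\mathbb{I})\ast f$ is zero, while the left-hand side is an isomorphism post-composed with $f$, forcing $f=0$.

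Armed with this, our triangle is a Bousfield triangle for $\mathcal{I}$, so $\mathcal{I}$ is Bousfield; moreover, any $a\in\mathcal{I}^\perp$ admits the trivial Bousfield triangle $0\to a\to a$, which by uniqueness must agree with the constructed triangle, giving $\mathcal{I}^\perp=L_V(\mathbb{I})\ast\mathcal{K}$ and pinning down the (co)localization functors as claimed. Since $L_V(\mathbb{I})\ast-$ preserves coproducts, so does the right adjoint of the Verdier quotient $\mathcal{K}\to\mathcal{K}/\mathcal{I}$, and $\mathcal{I}$ is smashing.
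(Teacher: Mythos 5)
Your proof is correct, but it takes a more self-contained route than the paper: the paper simply cites Stevenson's result \cite[Lemma~4.4]{MR3181496} for the hard part (that $\Gamma_V(\mathbb{I})\ast\mathcal{K}$ is Bousfield with perpendicular category $L_V(\mathbb{I})\ast\mathcal{K}$) and only adds the short observations identifying the (co)localization functors and deducing the smashing property from coproduct preservation. You instead reprove the cited input from scratch, by transferring the idempotence relations $\Gamma_V(\mathbb{I})\otimes L_V(\mathbb{I})\cong 0$ and $L_V(\mathbb{I})\xrightarrow{\sim}L_V(\mathbb{I})\otimes L_V(\mathbb{I})$ of the Rickard idempotents along the action, and your naturality trick with $\eta$ is exactly the right way to get the orthogonality $\Hom(\Gamma_V(\mathbb{I})\ast b,\,L_V(\mathbb{I})\ast a)=0$ without circularity. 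Two points are slightly terse but not gaps: (i) the vanishing $\Gamma_V(\mathbb{I})\otimes L_V(\mathbb{I})\cong 0$ does not follow from the tensored triangle alone --- you need that $\mathcal{T}_V$ and $\mathcal{T}_V^{\perp}$ are both $\otimes$-ideals (this is the definition of a smashing ideal, so the product lies in $\mathcal{T}_V\cap\mathcal{T}_V^{\perp}=0$); (ii) passing from ``a localization triangle exists for every object'' to ``the Verdier quotient has a right adjoint'' is a standard fact about Bousfield localization that deserves at least a citation. What your approach buys is independence from the external reference; what the paper's approach buys is brevity, since the statement is genuinely a special case of an already-published lemma.
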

\begin{proof}
  It is shown in~\cite[lemma~4.4]{MR3181496} that the subcategory~$\Gamma_V(\mathbb{I}) \ast \mathcal{K}$ is Bousfield with
	\begin{equation}
    (\Gamma_V(\mathbb{I}) \ast \mathcal{K})^{\perp} = L_V(\mathbb{I}) \ast \mathcal{K} \coloneqq \lbrace a \in \mathcal{K}: a \cong L_V(\mathbb{I}) \ast b~\text{for some}~b \in \mathcal{K}\rbrace.
  \end{equation}
	Both~$\Gamma_V(\mathbb{I}) \ast \mathcal{K}$ and~$L_V(\mathbb{I}) \ast \mathcal{K}$ are~$\mathcal{T}$-submodules, and we have a localization triangle
	\begin{equation}
    \Gamma_V(\mathbb{I}) \to \mathbb{I} \to L_V(\mathbb{I}) \to \Sigma(\Gamma_V(\mathbb{I})).
  \end{equation}
	Applying the functor~$- \ast a$ to this triangle shows that the localization and colocalization functors associated to the Bousfield subcategory are given by~$L_V(\mathbb{I}) \ast -$ and~$\Gamma_V(\mathbb{I}) \ast -$, respectively. Since~$L_V(\mathbb{I}) \ast -$ preserves coproducts by defintion of an action, it follows that~$\Gamma_V(\mathbb{I}) \ast \mathcal{K}~$ is indeed smashing.
\end{proof}

Following~\cite{MR3181496}, we can now assign to any object~$a \in \mathcal{K}$ a support in~$\Spc(\mathcal{T}^\compact)$ as follows:
\begin{definition}
	Let~$x \in \Spc(\mathcal{T}^\compact)$. Then, for an object~$a \in \mathcal{K}$, we define its \emph{support} as
	\begin{equation}
    \supp_{\mathcal{T}}(a) \coloneqq \lbrace x \in \Spc(\mathcal{T}^\compact) : \Gamma_x(\mathbb{I}) \ast a \neq 0\rbrace.
  \end{equation}
\end{definition}
If there is no risk of confusion, we will usually drop the subscript~$\mathcal{T}$ and write~$\supp(a)$ instead. Furthermore, we will abbreviate the expression~$\Gamma_x(\mathbb{I}) \ast a~$ by~$\Gamma_x a$.

Let us state two important properties of the support.
\begin{proposition}[see {\cite[lemma~5.7]{MR3181496}}]
	Let~$V$ be a specialization-closed subset of~$\Spc(\mathcal{T}^\compact)$ and~$a$ an object of~$\mathcal{K}$. Then
	\begin{equation}
    \supp(\Gamma_V(a)) = \supp(a) \cap V
  \end{equation}
	and
	\begin{equation}
    \supp(L_V(a)) = \supp(a) \cap (\Spc(\mathcal{T}^\compact) \setminus V).
  \end{equation}
	\label{propstevesuppprop}
\end{proposition}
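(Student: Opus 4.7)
The plan is to reduce both equalities to identities living inside $\mathcal{T}$, exploiting that in the action framework every relevant localization is realized by $\ast$-multiplying with a fixed object of $\mathcal{T}$. By Lemma~\ref{lmarelativesmash}, the colocalization and localization onto the smashing subcategory of $\mathcal{K}$ determined by $V$ satisfy $\Gamma_V(a) = \Gamma_V(\mathbb{I}) \ast a$ and $L_V(a) = L_V(\mathbb{I}) \ast a$, and by definition $\Gamma_x a = \Gamma_x(\mathbb{I}) \ast a$. Associativity of $\ast$ then yields
\[
\Gamma_x\bigl(\Gamma_V(a)\bigr) \cong \bigl(\Gamma_x(\mathbb{I}) \otimes \Gamma_V(\mathbb{I})\bigr) \ast a \quad\text{and}\quad \Gamma_x\bigl(L_V(a)\bigr) \cong \bigl(\Gamma_x(\mathbb{I}) \otimes L_V(\mathbb{I})\bigr) \ast a,
\]
so both support assertions follow once I determine when the tensor products $\Gamma_x(\mathbb{I}) \otimes \Gamma_V(\mathbb{I})$ and $\Gamma_x(\mathbb{I}) \otimes L_V(\mathbb{I})$ vanish in $\mathcal{T}$ itself.

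The key internal claim is the dichotomy $\Gamma_x(\mathbb{I}) \otimes \Gamma_V(\mathbb{I}) \cong \Gamma_x(\mathbb{I})$ when $x \in V$ and $\Gamma_x(\mathbb{I}) \otimes \Gamma_V(\mathbb{I}) = 0$ when $x \notin V$; the analogous statement for $L_V(\mathbb{I})$ then follows immediately by tensoring the localization triangle $\Gamma_V(\mathbb{I}) \to \mathbb{I} \to L_V(\mathbb{I}) \to \Sigma\Gamma_V(\mathbb{I})$ by $\Gamma_x(\mathbb{I})$. For $x \in V$, specialization-closedness forces $\overline{\{x\}} \subseteq V$, whence $\Gamma_{\overline{\{x\}}}(\mathbb{I}) \in \mathcal{T}_V$ and therefore $\Gamma_V(\Gamma_{\overline{\{x\}}}(\mathbb{I})) \cong \Gamma_{\overline{\{x\}}}(\mathbb{I})$. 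Since both $\Gamma_V$ and $L_{Y_x}$ are given by tensoring with a fixed object of $\mathcal{T}$, they commute, and unpacking $\Gamma_x = L_{Y_x} \circ \Gamma_{\overline{\{x\}}}$ delivers the desired $\Gamma_V(\Gamma_x(\mathbb{I})) \cong \Gamma_x(\mathbb{I})$.

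For $x \notin V$, I want to show that $\Gamma_x(\mathbb{I}) \in \mathcal{T}_V^\perp$, which will force $\Gamma_V(\mathbb{I}) \otimes \Gamma_x(\mathbb{I}) = 0$. The crucial observation is that $V \subseteq Y_x$: if some $y \in V$ satisfied $x \in \overline{\{y\}}$, then specialization-closedness of $V$ would give $x \in V$, contrary to hypothesis. Therefore every compact $c \in \mathcal{T}^c_V$ also lies in $\mathcal{T}^c_{Y_x}$, and hence is annihilated by $L_{Y_x}$. Using rigidity of compacts, this lets me compute $\Hom_{\mathcal{T}}(c, \Gamma_x(\mathbb{I})) \cong \Hom_{\mathcal{T}}(\mathbb{I}, c^{\vee} \otimes L_{Y_x}\Gamma_{\overline{\{x\}}}(\mathbb{I})) = 0$, and then the description $\mathcal{T}_V = \langle \mathcal{T}^c_V \rangle$ from Theorem~\ref{thmmiller}, combined with the fact that $\Hom(-, \Gamma_x(\mathbb{I}))$ sends coproducts to products, extends the vanishing from $\mathcal{T}^c_V$ to the whole of $\mathcal{T}_V$, yielding $\Gamma_x(\mathbb{I}) \in \mathcal{T}_V^\perp$.

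The main obstacle is precisely this off-support case: although morally $\Gamma_x(\mathbb{I})$ is ``concentrated at $x$'', making this rigorous requires the localizing-closure description of $\mathcal{T}_V$ from Theorem~\ref{thmmiller} together with a rigidity argument to promote a vanishing statement on generators to a vanishing statement on the entire smashing ideal. Once these $\mathcal{T}$-internal dichotomies are in place, $\ast$-multiplying by $a$ and reading off non-vanishing point-by-point recovers both equalities of the proposition.
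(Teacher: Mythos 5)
The paper does not prove this proposition --- it is quoted directly from \cite[Lemma~5.7]{MR3181496} --- so there is no in-text argument to compare against; your proof is correct and is essentially the standard one from the cited source: reduce everything to the dichotomy $\Gamma_x(\mathbb{I})\otimes\Gamma_V(\mathbb{I})\cong\Gamma_x(\mathbb{I})$ for $x\in V$ and $=0$ for $x\notin V$, then apply $-\ast a$. One small remark: in the case $x\notin V$ your rigidity-and-orthogonality detour (which tacitly uses $\supp(c^\vee)=\supp(c)$) can be bypassed, since $V\subseteq Y_x$ already gives $\mathcal{T}_V=\langle\mathcal{T}^c_V\rangle\subseteq\langle\mathcal{T}^c_{Y_x}\rangle=\mathcal{T}_{Y_x}=\ker(L_{Y_x})$, whence $\Gamma_x(\mathbb{I})\otimes\Gamma_V(\mathbb{I})\cong\Gamma_{\overline{\{x\}}}(\mathbb{I})\otimes L_{Y_x}(\Gamma_V(\mathbb{I}))=0$ directly.
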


\begin{definition}
	For every specialization-closed subset~$V\subset \Spc(\mathcal{T}^\compact)$, the subcategory~$\mathcal{K}_V$ is defined as the essential image of the functor~$\Gamma_V(\mathbb{I}) \ast -$. For every integer~$p$ the subcategory~$\mathcal{K}_{(p)}$ is defined as~$\Gamma_{V_\leq p}(\mathbb{I}) \ast \mathcal{K}$, where~$V_{\leq p} \subset \Spc(\mathcal{T}^\compact)$ is the subset of all points~$x$ such that~$\dim(x) \leq p$.
	%the full subcategory of~$\mathcal{K}$ on the collection of objects~$\lbrace a \in \mathcal{K}: \dim(\supp(a)) \leq p \rbrace$.
	\label{dfnsubsetsubcat}
\end{definition}

\begin{remark}
	In~\cite{1510.00211},~$\mathcal{K}_{(p)}$ is defined differently, namely as the full subcategory of~$\mathcal{K}$ on the collection of objects~$\lbrace a \in \mathcal{K}: \dim(\supp(a)) \leq p \rbrace$. This coincides with \cref{dfnsubsetsubcat} whenever~$\supp$ detects vanishing, i.e.~whenever~$\supp(a) = \emptyset \Leftrightarrow a= 0$ holds. Indeed, if~$a \in \Gamma_{V_\leq p}(\mathbb{I}) \ast \mathcal{K}$, then~$a \cong \Gamma_{V \leq p}(\mathbb{I}) \ast b$ for some~$b \in \mathcal{K}$ and it follows from \cref{propstevesuppprop} that~$\supp(a) \subset V_{\leq p}$. Conversely, if
	\begin{equation}
    \dim(\supp(a)) \leq p \Leftrightarrow \supp(a) \subset V_{\leq p},
  \end{equation}
	we have a localization triangle
	\begin{equation}
    \Gamma_{V_{\leq p}}(\mathbb{I}) \ast a \to a \to L_{V_{\leq p}}(\mathbb{I}) \ast a \to \Sigma(\Gamma_{V_{\leq p}}(\mathbb{I})),
  \end{equation}
	and it follows from \cref{propstevesuppprop} that~$\supp(L_{V_{\leq p}}(\mathbb{I})) = \emptyset$ and hence~$L_{V_{\leq p}}(\mathbb{I}) \cong 0$. This implies~$\Gamma_{V_{\leq p}}(\mathbb{I}) \ast a \cong a$ and shows that~$a \in \Gamma_{V_{\leq p}}(\mathbb{I}) \ast \mathcal{K}$. By~\cite[theorem 6.9]{MR3181496},~$\supp$ detects vanishing when the action of~$\mathcal{T}$ on~$\mathcal{K}$ satisfies the \emph{local-to-global principle}, see \cref{remlocglob}.
	\label{remdifferentsubcatdef}
\end{remark}

\begin{proposition}[See {\cite[corollary~4.11]{MR3181496}}]
	Let~$V\subset \Spc(\mathcal{T}^\compact)$ be specialization-closed. The category~$\mathcal{K}_V$ is compactly generated.
	\label{propsubsetsubcatcompgen}
\end{proposition}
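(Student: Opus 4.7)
The plan is to exhibit an explicit set of compact generators for $\mathcal{K}_V$, built from a chosen set of compact generators of $\mathcal{K}$ together with the compact objects of $\mathcal{T}^c$ supported on $V$. Recall that $\mathcal{T}^c_V := \lbrace c \in \mathcal{T}^c : \supp(c) \subset V \rbrace$ is a thick $\otimes$-ideal of $\mathcal{T}^c$, so by \Cref{thmmiller} the smashing ideal $\mathcal{T}_V = \langle \mathcal{T}^c_V \rangle$ is itself compactly generated with $(\mathcal{T}_V)^c = \mathcal{T}^c_V$. Fixing a set $\mathcal{G}$ of compact generators of $\mathcal{K}$, the natural candidate generating set is
\[\mathcal{S} := \lbrace c \ast g : c \in \mathcal{T}^c_V,\ g \in \mathcal{G} \rbrace~.\]

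First I would check that $\mathcal{S} \subseteq \mathcal{K}_V$ and that every object of $\mathcal{S}$ is compact in $\mathcal{K}_V$. For the inclusion, $\supp(c) \subseteq V$ forces $\Gamma_V(\mathbb{I}) \otimes c \cong c$, and then the associativity isomorphism of the action gives $c \ast g \cong \Gamma_V(\mathbb{I}) \ast (c \ast g)$, which exhibits $c \ast g$ as being in the essential image of $\Gamma_V(\mathbb{I}) \ast -$. For compactness, rigidity of $c$ should let me identify $c^{\vee} \ast -$ as a right adjoint to $c \ast -$ (where $c^{\vee} := \underline{\mathrm{hom}}(c,\mathbb{I})$); since $c^{\vee}$ is again compact and the action preserves coproducts in the second variable, this right adjoint is coproduct-preserving, so $c \ast -$ sends compact objects to compact objects. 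Hence $c \ast g$ is compact in $\mathcal{K}$. Because \Cref{lmarelativesmash} says $\mathcal{K}_V$ is smashing, the inclusion $\mathcal{K}_V \hookrightarrow \mathcal{K}$ preserves coproducts, and therefore an object of $\mathcal{K}_V$ that is compact in $\mathcal{K}$ is automatically compact in $\mathcal{K}_V$.

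For generation, I would suppose $a \in \mathcal{K}_V$ satisfies $\Hom_{\mathcal{K}}(\Sigma^n(c \ast g), a) = 0$ for every $c \in \mathcal{T}^c_V$, $g \in \mathcal{G}$, and $n \in \mathbb{Z}$, and deduce that $a = 0$. Using the rigidity adjunction $\Hom_{\mathcal{K}}(c \ast g, a) \cong \Hom_{\mathcal{K}}(g, c^{\vee} \ast a)$ and the hypothesis that $\mathcal{G}$ generates $\mathcal{K}$, the vanishing forces $c^{\vee} \ast a = 0$ for every $c \in \mathcal{T}^c_V$. Since the duality $c \mapsto c^{\vee}$ is an anti-equivalence of $\mathcal{T}^c$ that preserves supports (as $c$ is a retract of $c \otimes c^{\vee} \otimes c$ and vice versa), this is equivalent to saying $c \ast a = 0$ for every $c \in \mathcal{T}^c_V$. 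Now since $\Gamma_V(\mathbb{I}) \in \mathcal{T}_V$ and $\mathcal{T}_V$ is compactly generated by $\mathcal{T}^c_V$, a standard argument presents $\Gamma_V(\mathbb{I})$ as a homotopy colimit $\mathrm{hocolim}_i\, c_i$ with $c_i \in \mathcal{T}^c_V$. Applying the coproduct-preserving functor $- \ast a$ then yields
\[a \cong \Gamma_V(\mathbb{I}) \ast a \cong \mathrm{hocolim}_i(c_i \ast a) = 0~.\]

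The main obstacle I anticipate is establishing the module adjunction $\Hom_{\mathcal{K}}(c \ast g, a) \cong \Hom_{\mathcal{K}}(g, c^{\vee} \ast a)$ for rigid $c \in \mathcal{T}^c$: morally this is the module analogue of the standard rigid-duality isomorphism in $\mathcal{T}$, but formally it has to be constructed from coevaluation and evaluation maps $\mathbb{I} \to c^{\vee} \otimes c \to \mathbb{I}$ transported into $\mathcal{K}$ via the associativity and unit coherences of the action. Once that adjunction is in hand, everything else reduces to applications of \Cref{thmmiller,lmarelativesmash} and the standard fact that objects in a compactly generated triangulated category are homotopy colimits of compact ones.
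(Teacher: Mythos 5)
Your argument is correct, and it is essentially the standard proof: the paper itself offers no proof of this proposition but cites \cite[Corollary~4.11]{MR3181496}, where the compact generators of $\mathcal{K}_V$ are exhibited in exactly the way you propose, namely as $c\ast g$ with $c\in\mathcal{T}^c_V$ and $g$ ranging over compact generators of $\mathcal{K}$, using the adjunction $c\ast-\dashv c^{\vee}\ast-$ that you correctly identify as the key input (it is established in op.~cit.\ for rigid $c$). One small imprecision: it is not true that $\Gamma_V(\mathbb{I})$ is a filtered homotopy colimit of \emph{compact} objects $c_i\in\mathcal{T}^c_V$ --- the standard cellular-tower presentation only writes an object of a compactly generated category as a homotopy colimit of a tower whose terms are coproducts of compacts and extensions thereof. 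The fix is immediate and does not change your argument: the full subcategory $\lbrace t\in\mathcal{T}: t\ast a=0\rbrace$ is localizing (it is triangulated and closed under coproducts because $-\ast a$ is exact and coproduct-preserving), so once you know it contains $\mathcal{T}^c_V$ it contains $\langle\mathcal{T}^c_V\rangle=\mathcal{T}_V\ni\Gamma_V(\mathbb{I})$, whence $a\cong\Gamma_V(\mathbb{I})\ast a=0$.
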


We now come to the definition of the central invariant that is studied in this article. For a triangulated category~$\mathcal{C}$, we shall denote by~$\mathcal{C}^{\natural}$ its \emph{idempotent completion}, a triangulated category with a fully faithful inclusion~$\mathcal{C} \to \mathcal{C}^{\natural}$ which is universal for the property that all idempotents in~$\mathcal{C}^{\natural}$ split (see~\cite{MR1813503} for a detailed discussion). Let us first write down a diagram of Grothendieck groups:
\begin{equation}
  \begin{tikzcd}
    \Kzero(\mathcal{K}^\compact_{(p)}) \arrow[r, "q^\natural"] \arrow[d, "i"] & \Kzero((\mathcal{K}^\compact_{(p)}/\mathcal{K}^\compact_{(p-1)})^{\natural}) ~(= \Kzero((\mathcal{K}_{(p)}/\mathcal{K}_{(p-1)})^\compact))\\
  	\Kzero(\mathcal{K}^\compact_{(p+1)})
  \end{tikzcd}
\end{equation}
Here,~$q^{\natural}$ is the map induced by the composition of the Verdier quotient functor~$\mathcal{K}^\compact_{(p)} \to \mathcal{K}^\compact_{(p)}/\mathcal{K}^\compact_{(p-1)}$ and the inclusion into the idempotent completion of the latter category. The morphism~$i$ is induced by the inclusion functor. The identification
\begin{equation}
  (\mathcal{K}^\compact_{(p)}/\mathcal{K}^\compact_{(p-1)})^{\natural} \cong  (\mathcal{K}_{(p)}/\mathcal{K}_{(p-1)})^\compact
\end{equation}
holds by~\cite[theorem~5.6.1]{MR2681709} since~$\mathcal{K}_{(p-1)}$ is compactly generated by \cref{propsubsetsubcatcompgen}.

\begin{definition}[See \cite{1510.00211}]
	The \emph{dimension~$p$ tensor triangular cycle group} of~$\mathcal{K}$ relative to the action~$\ast$ is defined as
	\begin{equation}
    \Cyc^{\Delta}_{p}(\mathcal{T},\mathcal{K}) \coloneqq \Kzero((\mathcal{K}^\compact_{(p)}/\mathcal{K}^\compact_{(p-1)})^{\natural}).
  \end{equation}
	The \emph{dimension~$p$ tensor triangular Chow group} of~$\mathcal{K}$ relative to the action~$\ast$ is defined as
	\begin{equation}
    \CH^{\Delta}_{p}(\mathcal{K}) \coloneqq \Cyc^{\Delta}_{p}(\mathcal{K})/q^{\natural}(\ker(i)).
  \end{equation}
	\label{definition:1510.00211}
\end{definition}

\begin{remark}
	In~\cite{1510.00211}, the definition of relative tensor triangular cycle and Chow groups was given under the assumption that another technical condition, the \emph{local-to-global principle}, is satisified. The principle asserts that for any object~$a \in \mathcal{K}$, the smallest localizing subcategory of~$\mathcal{K}$ that is closed under the action of~$\mathcal{T}$ and contains~$a$ equals the smallest localizing subcategory of~$\mathcal{K}$ that is closed under the action of~$\mathcal{T}$ and contains all the objects~$\Gamma_x a$ for all~$x \in \mathrm{Spc}(\mathcal{T}^c)$ (see \cite[definition~6.1]{MR3181496}). While it is not necessary for the statement of \cref{definition:1510.00211} to make sense, the local-to-global principle makes dealing with these invariants easier (see \cref{remdifferentsubcatdef}), and it is satisfied very often. In particular, it will be satisfied in our main case of interest by~\cite[theorem 6.9]{MR3181496}, when we consider actions of the derived category of quasi-coherent sheaves on a noetherian separated scheme. In order to keep the exposition of the chapter a bit lighter, we will not go into further details concerning this topic.
	\label{remlocglob}
\end{remark}

Let us illustrate our definitions with an example that explains the name ``tensor triangular Chow group''. The following theorem is a slight variation of~\cite[corollary~3.6]{1510.00211}, and is based on Quillen's result describing the Chow groups using the coniveau spectral sequence \cite[proposition~5.14]{MR0338129}.
\begin{theorem}
  Let~$X$ be a separated regular scheme of finite type over a field. Consider the action of~$\derived(\Qcoh(\mathcal{O}_X))$ on itself via~$-\otimes^\LLL-$. Then for all~$p \geq 0$, we have isomorphisms
		\begin{align*}
		  \Cyc^{\Delta}_{p}(\derived(\Qcoh(\mathcal{O}_X)),\derived(\Qcoh(\mathcal{O}_X))) &\cong \Cyc_{p}(X) \\
		  \CH^{\Delta}_{p}(\derived(\Qcoh(\mathcal{O}_X)),\derived(\Qcoh(\mathcal{O}_X))) &\cong \CH_{p}(X),
		\end{align*}
		where~$\Cyc_{p}(X)$ and~$\CH_{p}(X)$ denote the dimension~$p$ cycle and Chow groups of~$X$.
		\label{thmchowrecover}
\end{theorem}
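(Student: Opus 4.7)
The plan is to reduce the statement to \cite[Corollary~3.6]{1510.00211}, which treats essentially the same computation under a slightly different formalism, and then to invoke Quillen's dévissage to pass from the K-theoretic subquotients to classical cycles and rational equivalence.

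First I would reconcile the definition of the filtration $\mathcal{K}_{(p)}$ from Definition~\ref{dfnsubsetsubcat} with the support-theoretic one used in \cite{1510.00211}, namely the full subcategory on objects $a$ with $\dim \supp(a) \leq p$. By Remark~\ref{remdifferentsubcatdef} the two descriptions agree as soon as $\supp$ detects vanishing, and for the self-action of $\mathrm{D}(\Qcoh(\mathcal{O}_X))$ with $X$ noetherian and separated this follows from the local-to-global principle recalled in Remark~\ref{remlocglob}. Next I would identify the compact objects in each stage of the filtration. Since $X$ is regular and separated, the compacts of $\mathcal{K}$ are the perfect complexes, and these coincide with $\mathrm{D}^{\mathrm{b}}(\mathrm{Coh}(X))$. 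Consequently $\mathcal{K}^c_{(p)}$ is the triangulated subcategory $\mathrm{D}^{\mathrm{b}}_{X_{\leq p}}(\mathrm{Coh}(X))$ of bounded coherent complexes whose cohomology sheaves are supported in dimension at most $p$. This is the situation in which Quillen's dévissage theorem, applied to the abelian filtration on $\mathrm{Coh}(X)$ by dimension of support, computes $K$-theory.

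Combining dévissage with the identification of $K_0$ of the idempotent-completed Verdier quotient with $K_0$ of the corresponding Serre quotient of $\mathrm{Coh}(X)$ (the Serre quotient being a direct sum, over points $x$ of dimension $p$, of finite-length $\mathcal{O}_{X,x}$-modules) should yield
\[\Knought\bigl((\mathcal{K}^c_{(p)}/\mathcal{K}^c_{(p-1)})^{\natural}\bigr) \;\cong\; \bigoplus_{x \in X_{(p)}} \Knought(\mathrm{k}(x)) \;\cong\; \bigoplus_{x \in X_{(p)}} \mathbb{Z} \;=\; \Cyc_p(X),\]
with the second isomorphism sending the class of $\mathcal{O}_{\overline{\{x\}}}$ to the prime cycle $[\overline{\{x\}}]$. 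For the Chow group I would then analyse $q^{\natural}(\ker i)$ via the K-theoretic localisation long exact sequence
\[\Kone\bigl(\mathcal{K}^c_{(p+1)}/\mathcal{K}^c_{(p)}\bigr) \xrightarrow{\partial} \Knought(\mathcal{K}^c_{(p)}) \xrightarrow{i} \Knought(\mathcal{K}^c_{(p+1)}),\]
so that $\ker i = \im \partial$. Dévissage again rewrites the leftmost term as $\bigoplus_{x \in X_{(p+1)}} \mathrm{k}(x)^{\times}$, and the composition $q^{\natural}\circ\partial$ is the classical Weil divisor map $f \mapsto \sum_Z \ord_Z(f)\,[Z]$; its cokernel is by definition $\CH_p(X)$.

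The main obstacle is the final matching of the connecting morphism $\partial$ with the Weil divisor map. This identification is precisely the content of the Gersten complex computation due to Quillen and Gillet, so I would quote it from the standard literature rather than reprove it. A secondary technical point is ensuring that passing from the original (support-theoretic) filtration used in \cite{1510.00211} to the Bousfield-acyclisation filtration of Definition~\ref{dfnsubsetsubcat} preserves the subcategories of compact objects; this is already handled by the reconciliation argument in the first paragraph, so no extra work should be needed there.
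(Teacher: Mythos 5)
Your proposal is correct and follows essentially the same route as the paper: the paper's proof also reduces to \cite[Corollary~3.6]{1510.00211} and identifies the cycle and Chow groups with the $E^1_{p,-p}$ and $E^2_{p,-p}$ terms of the niveau spectral sequence, and your d\'evissage computation together with the identification of the boundary map with the Gersten/Weil divisor differential is precisely what establishes those identifications. The reconciliation of the two filtrations via the local-to-global principle and the passage through Corollary~\ref{corverdiervsserre} are handled as in the paper, so no gap remains.
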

\begin{proof}
  This is~\cite[corollary~3.6]{1510.00211}, with codimension replaced by dimension. The former statement is proved by showing that the groups~$\Cyc_{\Delta}^{p}(\derived(\Qcoh(\mathcal{O}_X)),\derived(\Qcoh(\mathcal{O}_X)))$ and~$\CH_{\Delta}^{p}(\derived(\Qcoh(\mathcal{O}_X)),\derived(\Qcoh(\mathcal{O}_X)))$ which are defined analogously via a filtration by \emph{co}dimension of support, are isomorphic to certain terms on the~$E^1$ and~$E^2$ page of Quillen's coniveau spectral sequence associated to~$X$, which happen to be isomorphic to~$\Cyc^{p}(X)$ and~$\CH^{p}(X)$, respectively.

  In order to prove the ``dimension'' version of the statement, we see that the same argument shows that~$\Cyc^{\Delta}_{p}(\derived(\Qcoh(\mathcal{O}_X)),\derived(\Qcoh(\mathcal{O}_X)))$ and~$\CH^{\Delta}_{p}(\derived(\Qcoh(\mathcal{O}_X)),\derived(\Qcoh(\mathcal{O}_X)))$ are isomorphic to the terms~$E^1_{p,-p}$ and~$E^2_{p,-p}$ of the \emph{niveau} spectral sequence of~$X$, which happen to be isomorphic to~$\Cyc_{p}(X)$ and~$\CH_{p}(X)$ (see e.g.~\cite{MR2648734}  for the identification of~$E^1_{p,-p}$ and~$E^2_{p,-p}$ with~$\Cyc_{p}(X)$ and~$\CH_{p}(X)$).
\end{proof}

\begin{remark}[See {\cite[\S4]{1510.00211}}]
	We can actually do better and also recover~$\CH_{p}(X)$ for singular schemes. In order to do so, one lets~$\derived(\Qcoh(\mathcal{O}_X))$ act on~$\mathrm{K}(\Inj X)$, the homotopy category of quasi-coherent injective sheaves on~$X$, instead of considering the action of~$\derived(\Qcoh(\mathcal{O}_X))$ on itself. Later on, we shall be interested in the action of~$\derived(\Qcoh(\mathcal{O}_X))$ on the derived category of a coherent~$\mathcal{O}_X$-algebra.
	\label{remcoderived}
\end{remark}

\section{An exact sequence}
\label{sectionexseq}
In this section we derive an exact sequence that will give us a new description of~$\CH^{\Delta}_{p}(\mathcal{T}, \mathcal{K})$ as an image of a map in a K-theoretic localization sequence. It will be especially useful for computing~$\CH^{\Delta}_{0}(\mathcal{T}, \mathcal{K})$ when~$\dim(\Spc(\mathcal{T}^\compact)) =1$. Let~$\mathcal{T}$ be a rigidly-compactly generated triangulated category that has an action~$\ast$ on a compactly generated triangulated category~$\mathcal{K}$ and assume that~$\Spc(\mathcal{T}^\compact)$ is a noetherian topological space. Then we know that~$\mathcal{K}_{(p)}$ is a compactly generated subcategory of~$\mathcal{K}$ for all~$p \geq 0$ and we have an exact sequence of triangulated categories
\begin{equation}
  \mathcal{K}_{(p)}/\mathcal{K}_{(p-1)} \to \mathcal{K}_{(p+1)}/\mathcal{K}_{(p-1)} \to \mathcal{K}_{(p+1)}/\mathcal{K}_{(p)}.
  \label{eqnexseqdimdiff2}
\end{equation}
Since the inclusion~$\mathcal{K}_{(p)} \to \mathcal{K}_{(p+1)}$ admits a coproduct-preserving right adjoint~$\Gamma_{V_{\leq p}}(\mathbb{I}) \ast -$, the same is true for both functors in the sequence (\ref{eqnexseqdimdiff2}). Hence it restricts to a sequence of compact objects
\begin{equation}
  \left(\mathcal{K}_{(p)}/\mathcal{K}_{(p-1)}\right)^\compact \to \left(\mathcal{K}_{(p+1)}/\mathcal{K}_{(p-1)}\right)^\compact \to \left(\mathcal{K}_{(p+1)}/\mathcal{K}_{(p)}\right)^\compact
\end{equation}
which is exact up to factors. Applying~$\Kzero$ to this diagram yields a sequence of abelian groups
\begin{equation}
  \ZZ^{\Delta}_{p}(\mathcal{T}, \mathcal{K}) \xrightarrow{\iota} \Kzero\left(\left(\mathcal{K}_{(p+1)}/\mathcal{K}_{(p-1)}\right)^\compact\right) \xrightarrow{\pi} \ZZ^{\Delta}_{p+1}(\mathcal{T}, \mathcal{K})
\end{equation}
which is exact at the middle term.

\begin{lemma}
	The map~$\pi$ is surjective if and only if~$\mathcal{K}_{(p+1)}^\compact/\mathcal{K}_{(p)}^\compact$ is idempotent complete.
	\label{lmaidcompsurj}
\end{lemma}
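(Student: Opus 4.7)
The plan is to factor the map $\pi$ as a surjection onto $\KK_0(\mathcal{K}^c_{(p+1)}/\mathcal{K}^c_{(p)})$ followed by the canonical injection into $\KK_0((\mathcal{K}^c_{(p+1)}/\mathcal{K}^c_{(p)})^{\natural}) = \Cyc^{\Delta}_{p+1}(\mathcal{T},\mathcal{K})$, and then invoke Thomason's classification of dense triangulated subcategories of idempotent completions via subgroups of $\KK_0$.

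The starting observation is Thomason's theorem on idempotent completions \cite{MR1813503}: for any essentially small triangulated category $\mathcal{C}$, the canonical map $\KK_0(\mathcal{C}) \to \KK_0(\mathcal{C}^{\natural})$ is injective, and its image is the subgroup corresponding, under Thomason's bijection between dense triangulated subcategories of $\mathcal{C}^{\natural}$ and subgroups of $\KK_0(\mathcal{C}^{\natural})$, to $\mathcal{C}$ itself. In particular, this map is an isomorphism if and only if $\mathcal{C}$ is idempotent complete. Applied to $\mathcal{C} = \mathcal{K}^c_{(p+1)}/\mathcal{K}^c_{(p)}$, whose idempotent completion equals $(\mathcal{K}_{(p+1)}/\mathcal{K}_{(p)})^c$ by \cite[Theorem~5.6.1]{MR2681709}, this gives the injection above and translates the desired iff-statement into the claim that $\pi$ is surjective exactly when this injection is an equality.

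It remains to exhibit the factoring of $\pi$ through $\KK_0(\mathcal{K}^c_{(p+1)}/\mathcal{K}^c_{(p)})$. For the $\supseteq$ inclusion (image of $\pi$ contains the image of the injection), I first note that any object $X$ of $\mathcal{K}^c_{(p+1)}$ gives, via the composition $\mathcal{K}^c_{(p+1)} \to \mathcal{K}^c_{(p+1)}/\mathcal{K}^c_{(p-1)} \hookrightarrow (\mathcal{K}_{(p+1)}/\mathcal{K}_{(p-1)})^c$, a class whose image under $\pi$ is the class of $\bar{X}$ in $\KK_0((\mathcal{K}_{(p+1)}/\mathcal{K}_{(p)})^c)$. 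Since by the Thomason--Trobaugh $\KK_0$-localization theorem for Verdier quotients, the map $\KK_0(\mathcal{K}^c_{(p+1)}) \twoheadrightarrow \KK_0(\mathcal{K}^c_{(p+1)}/\mathcal{K}^c_{(p)})$ is surjective, every class in the image of the injection arises from some such $X$. For the $\subseteq$ inclusion, one uses the relation $[(X,e)] + [(X,\mathrm{id}-e)] = [X]$, valid in the $\KK_0$ of any idempotent completion: combining this with the fact that the class $[X]$ of any honest $X \in \mathcal{K}^c_{(p+1)}$ lies in the image of the injection, and the identification (via the third isomorphism theorem for Verdier quotients) of the relevant intermediate categories, one deduces that every element in the image of $\pi$ lies in the image of the injection.

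The main obstacle is the $\subseteq$ inclusion: the domain of $\pi$ is the $\KK_0$ of an idempotent completion $(\mathcal{K}^c_{(p+1)}/\mathcal{K}^c_{(p-1)})^{\natural}$, and a priori its classes include contributions from ``virtual'' summands $(X,e)$ that need not descend to honest objects of $\mathcal{K}^c_{(p+1)}/\mathcal{K}^c_{(p)}$. The key step is to show that, after applying $\pi$, these classes can nevertheless be rewritten as combinations of classes in the subgroup $\KK_0(\mathcal{K}^c_{(p+1)}/\mathcal{K}^c_{(p)})$; this uses the compatibility of idempotent splittings with the Verdier quotient passage from $\mathcal{K}^c_{(p+1)}/\mathcal{K}^c_{(p-1)}$ down to $\mathcal{K}^c_{(p+1)}/\mathcal{K}^c_{(p)}$, together with the $\KK_0$-relation in the idempotent completion. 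Once this reduction is accomplished, Thomason's density theorem immediately yields the equivalence with idempotent completeness of $\mathcal{K}^c_{(p+1)}/\mathcal{K}^c_{(p)}$.
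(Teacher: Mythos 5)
Your overall strategy is the same as the paper's: identify $\mathrm{im}(\pi)$ with the subgroup of $\mathrm{K}_0\bigl((\mathcal{K}_{(p+1)}/\mathcal{K}_{(p)})^c\bigr)$ that Thomason's classification attaches to the dense subcategory $\mathcal{K}^c_{(p+1)}/\mathcal{K}^c_{(p)}$, and then read off idempotent completeness from maximality of that subgroup. Your $\supseteq$ containment is correct as written, and it already gives the implication ``idempotent complete $\Rightarrow$ $\pi$ surjective'', which is the direction the paper actually uses in its applications.

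The gap is the $\subseteq$ containment, which you yourself single out as the main obstacle and which is exactly what the converse implication needs: if $\mathrm{im}(\pi)$ were strictly larger than $\mathrm{im}\bigl(\mathrm{K}_0(\mathcal{K}^c_{(p+1)}/\mathcal{K}^c_{(p)})\bigr)$, then $\pi$ could be surjective without $\mathcal{K}^c_{(p+1)}/\mathcal{K}^c_{(p)}$ being idempotent complete. The mechanism you propose does not close this: the relation $[(X,e)]+[(X,\mathrm{id}-e)]=[X]$ only shows that the \emph{sum} of the two complementary classes lies in the subgroup generated by honest objects, not that each summand does. Concretely, the domain of $\pi$ is $\mathrm{K}_0\bigl((\mathcal{K}^c_{(p+1)}/\mathcal{K}^c_{(p-1)})^{\natural}\bigr)$, and when $\mathcal{K}^c_{(p+1)}/\mathcal{K}^c_{(p-1)}$ is itself not idempotent complete there are generators $[(X,e)]$ whose images $[(\bar{X},\bar{e})]$ are classes of direct summands in $(\mathcal{K}_{(p+1)}/\mathcal{K}_{(p)})^c$ that need not be isomorphic to objects of $\mathcal{K}^c_{(p+1)}/\mathcal{K}^c_{(p)}$; by Thomason's description of a dense subcategory as $\{x : [x]\in H\}$, showing that they are is precisely equivalent to the containment you want, so the sum relation alone cannot deliver it. As written, the ``only if'' half is therefore not established. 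For comparison, the paper's own proof is equally terse at this exact point --- it simply asserts via Thomason that $\mathrm{im}(\pi)$ is maximal iff the inclusion $\mathcal{K}^c_{(p+1)}/\mathcal{K}^c_{(p)}\hookrightarrow(\mathcal{K}_{(p+1)}/\mathcal{K}_{(p)})^c$ is essentially surjective --- so you have faithfully reconstructed the intended route; but the step you flag genuinely requires an argument (e.g.\ that the essential image of $(\mathcal{K}_{(p+1)}/\mathcal{K}_{(p-1)})^c$ in $(\mathcal{K}_{(p+1)}/\mathcal{K}_{(p)})^c$ coincides with $\mathcal{K}^c_{(p+1)}/\mathcal{K}^c_{(p)}$), and neither your sketch nor the appeal to the sum relation supplies one.
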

\begin{proof}
	We have~$\left(\mathcal{K}_{(p+1)}/\mathcal{K}_{(p)}\right)^\compact \cong \left(\mathcal{K}_{(p+1)}^\compact/\mathcal{K}_{(p)}^\compact\right)^{\natural}$ and hence~$\mathcal{K}_{(p+1)}^\compact/\mathcal{K}_{(p)}^\compact$ is a dense triangulated subcategory of~$\left(\mathcal{K}_{(p+1)}/\mathcal{K}_{(p)}\right)^\compact$. Thomason's classification of these subcategories (see~\cite{MR1436741}) then shows that~$\im(\pi)$ is maximal if and only if the inclusion ~$\mathcal{K}_{(p+1)}^\compact/\mathcal{K}_{(p)}^\compact \hookrightarrow \left(\mathcal{K}_{(p+1)}/\mathcal{K}_{(p)}\right)^\compact$ is essentially surjective which happens if and only if the former category is idempotent complete.
\end{proof}

We shall now be concerned with the kernel of~$\iota$. Our goal is to prove the following statement:
\begin{proposition}
	\label{propchowexseq}
	In the notation of \cref{definition:1510.00211}, we have~$\ker(\iota) = q^{\natural}(\ker(i))$. Hence, we obtain an exact sequence
	\begin{equation}
    0 \to \CH^{\Delta}_{p}(\mathcal{T}, \mathcal{K}) \xrightarrow{\overline{\iota}} \Kzero\left(\left(\mathcal{K}_{(p+1)}/\mathcal{K}_{(p-1)}\right)^\compact\right) \xrightarrow{\pi} \ZZ^{\Delta}_{p+1}(\mathcal{T}, \mathcal{K})
  \end{equation}
	which is exact on the right if and only if~$\mathcal{K}_{(p+1)}^\compact/\mathcal{K}_{(p)}^\compact$ is idempotent complete.
\end{proposition}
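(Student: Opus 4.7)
The plan is to realize the two-term sequence $\mathrm{Z}^{\Delta}_p \xrightarrow{\iota} \mathrm{K}_0((\mathcal{K}_{(p+1)}/\mathcal{K}_{(p-1)})^c) \xrightarrow{\pi} \mathrm{Z}^{\Delta}_{p+1}$ as a portion of a K-theory localization long exact sequence, and to identify $\ker(\iota)$ with $q^{\natural}(\ker(i))$ via the naturality of the connecting homomorphism under a comparison with a second such localization sequence. Right-exactness of $\pi$ will then be precisely the content of Lemma \ref{lmaidcompsurj}.

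I would first assemble a commutative diagram of short exact sequences of (essentially small) triangulated categories, exact up to direct factors. The upper row is $\mathcal{K}^c_{(p)} \hookrightarrow \mathcal{K}^c_{(p+1)} \to (\mathcal{K}_{(p+1)}/\mathcal{K}_{(p)})^c$ (exact up to factors by Neeman's theorem), and the lower row is the sequence of compact objects of the Verdier quotients by $\mathcal{K}_{(p-1)}$, namely $(\mathcal{K}_{(p)}/\mathcal{K}_{(p-1)})^c \hookrightarrow (\mathcal{K}_{(p+1)}/\mathcal{K}_{(p-1)})^c \to (\mathcal{K}_{(p+1)}/\mathcal{K}_{(p)})^c$, already shown in the excerpt to be exact up to factors. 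The vertical arrows on the left and in the middle are the quotient functors, inducing $q^{\natural}$ and, say, $q'^{\natural}$ on $\mathrm{K}_0$; the rightmost vertical arrow is the identity. Commutativity follows from the fact that for $\mathcal{K}_{(p-1)} \subseteq \mathcal{K}_{(p)} \subseteq \mathcal{K}_{(p+1)}$, both $\mathcal{K}^c_{(p+1)} \to (\mathcal{K}_{(p+1)}/\mathcal{K}_{(p)})^c$ and the composite $\mathcal{K}^c_{(p+1)} \to (\mathcal{K}_{(p+1)}/\mathcal{K}_{(p-1)})^c \to (\mathcal{K}_{(p+1)}/\mathcal{K}_{(p)})^c$ are the canonical quotient onto the same Verdier quotient.

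Next, I would apply Thomason--Trobaugh K-theory localization (which, for short exact sequences of triangulated categories up to direct factors, produces a long exact sequence whose right-hand terms involve the K-theory of the idempotent completion) to each row. By functoriality this gives a morphism of the two long exact sequences, identity on the $\mathrm{K}_\bullet((\mathcal{K}_{(p+1)}/\mathcal{K}_{(p)})^c)$ columns. Writing $\partial \colon \mathrm{K}_1((\mathcal{K}_{(p+1)}/\mathcal{K}_{(p)})^c) \to \mathrm{K}_0(\mathcal{K}^c_{(p)})$ and $\partial' \colon \mathrm{K}_1((\mathcal{K}_{(p+1)}/\mathcal{K}_{(p)})^c) \to \mathrm{Z}^{\Delta}_p$ for the two connecting homomorphisms, naturality forces $\partial' = q^{\natural} \circ \partial$. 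Exactness of the upper row at $\mathrm{K}_0(\mathcal{K}^c_{(p)})$ gives $\im(\partial) = \ker(i)$, and exactness of the lower row at $\mathrm{Z}^{\Delta}_p$ gives $\ker(\iota) = \im(\partial')$. Combining yields $\ker(\iota) = q^{\natural}(\im(\partial)) = q^{\natural}(\ker(i))$, which is the key identity, and the exact sequence of the proposition then reads off from the lower row together with Lemma \ref{lmaidcompsurj} for the right-exactness criterion.

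The main obstacle is largely foundational rather than combinatorial: one has to verify that the vertical functors in the diagram genuinely induce a morphism of K-theory localization sequences, so that the naturality of $\partial$ is applicable. Given that both rows are already exact up to factors, this reduces to citing the correct form of the Thomason--Trobaugh (or Schlichting, for non-connective K-theory) theorem and checking compatibility of the vertical functors with the localization setup; after that, the argument is a purely formal exact-sequence chase.
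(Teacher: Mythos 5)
Your overall architecture — realize the two-term sequence as part of a localization sequence, compare it with the analogous sequence for $\mathcal{K}^c_{(p)} \to \mathcal{K}^c_{(p+1)} \to (\mathcal{K}_{(p+1)}/\mathcal{K}_{(p)})^c$, and transport $\ker(i)$ to $\ker(\iota)$ along the vertical maps — is the right skeleton, and your diagram of categories is set up correctly. But the step you dismiss as ``largely foundational'' is where the argument actually breaks in the stated generality. The proposition is asserted for an arbitrary rigidly-compactly generated $\mathcal{T}$ acting on an arbitrary compactly generated triangulated category $\mathcal{K}$; no dg, Waldhausen, or stable $\infty$-categorical model is assumed. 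The Thomason--Trobaugh and Schlichting localization theorems you want to cite produce the long exact sequence (in particular the terms $\mathrm{K}_1\bigl((\mathcal{K}_{(p+1)}/\mathcal{K}_{(p)})^c\bigr)$ and the connecting map $\partial$, and the naturality of $\partial$ under your vertical functors) only for triangulated categories equipped with such models; for bare triangulated categories there is no localization sequence beyond the right-exact $\mathrm{K}_0(\mathcal{A}) \to \mathrm{K}_0(\mathcal{B}) \to \mathrm{K}_0(\mathcal{B}/\mathcal{A}) \to 0$ and Thomason's cofinality/density statements. Since both identities you need — $\im(\partial)=\ker(i)$ and $\ker(\iota)=\im(\partial')$ — are exactness statements \emph{at} a $\mathrm{K}_0$-term whose incoming map is $\partial$, you cannot eliminate the $\mathrm{K}_1$ terms from your argument. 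So as written the proof only applies when models exist (which they do in the paper's applications to $\mathrm{D}(\mathrm{Qcoh}(\mathcal{A}))$, but not in the generality of the proposition).

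The paper's proof avoids this entirely and stays at the level of $\mathrm{K}_0$: it stacks the two relevant squares (quotient by $\mathcal{K}^c_{(p-1)}$, then idempotent completion), uses the presentation $\mathrm{K}_0(\mathcal{B}/\mathcal{A}) = \mathrm{K}_0(\mathcal{B})/\im\,\mathrm{K}_0(\mathcal{A})$ to get $\ker(k)=q(\ker(i))$ by a direct chase, and then handles the idempotent-completion square with Lemma \ref{lmaidcompkers}, whose proof rests only on Thomason's classification of dense triangulated subcategories and the injectivity of $\mathrm{K}_0(\mathcal{C}) \to \mathrm{K}_0(\mathcal{C}^{\natural})$. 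If you want to keep your localization-sequence picture, you should either add a standing enhancement hypothesis, or replace the appeal to $\partial$ by these two elementary $\mathrm{K}_0$-level facts; the density argument is precisely the substitute for the exactness at $\mathrm{K}_0$ that the missing $\mathrm{K}_1$-term would have provided. The right-exactness criterion via Lemma \ref{lmaidcompsurj} is handled the same way in both arguments.
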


\begin{lemma}
  Let~$\mathcal{K}$ be a triangulated category and~$\mathcal{L} \subset \mathcal{K}$ a thick\footnote{It was pointed out to us by J\o rgen Rennemo that without this condition the statement of this lemma, and the next, is false. The proof of this lemma implicitly used this condition and is not changed from the published version, aside from fixing typos. The condition is satisfied for all situations considered in this paper.} triangulated subcategory. Consider the full triangulated subcategories~$\mathcal{L}^{\natural}, \mathcal{K} \subset \mathcal{K}^{\natural}$. Then~$\mathcal{L}^{\natural} \cap \mathcal{K} \cong \mathcal{L}$ as full subcategories of~$\mathcal{K}^{\natural}$.
	\label{lmacapsubcats}
\end{lemma}
\begin{proof}
	It is clear that an object~$A \in \mathcal{L}$ is both contained in~$\mathcal{L}^{\natural}$ and~$\mathcal{K}$. For the converse inclusion, suppose that~$A$ is in~$\mathcal{L}^{\natural} \cap \mathcal{K}$. Any object~$A \in \mathcal{L}^{\natural}$ can be written as a pair~$(A',e)$, where~$A'$ is an object of~$\mathcal{L}$ and~$e$ is an idempotent endomorphism~$A' \to A'$ in~$\mathcal{L}$. Similarly, the objects~$B$ of~$\mathcal{K}$ in~$\mathcal{K}^{\natural}$ are identified with exactly the pairs~$(B, \identity_B)$. It follows that~$A$ can be written in the form~$(A',\identity_{A'})$ with~$A' \in \mathcal{L}$. Hence,~$A$ is in the image of the inclusion functor~$\mathcal{L} \to \mathcal{K}^{\natural}$.
\end{proof}

\begin{lemma}
	In the situation of \cref{lmacapsubcats}, assume that~$\mathcal{L}, \mathcal{K}$ are essentially small and consider the diagram of Grothendieck groups
	\begin{equation}
	  \begin{tikzcd}
      \Kzero(\mathcal{L}) \arrow{r}{\alpha} \arrow{d}{\rho} & \Kzero(\mathcal{K}) \arrow{d}{\sigma} \\
      \Kzero(\mathcal{L}^{\natural}) \arrow{r}{\beta} &  \Kzero(\mathcal{K}^{\natural})
	  \end{tikzcd}
	\end{equation}
	induced by the inclusion functors. Then~$\ker(\beta) = \rho(\ker(\alpha))$.
	\label{lmaidcompkers}
\end{lemma}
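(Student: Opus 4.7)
The plan is to establish that the given square is cartesian in abelian groups, from which the equality of kernels follows by a direct diagram chase. The key ingredients are Lemma~\ref{lmacapsubcats} together with Thomason's classification of dense triangulated subcategories \cite{MR1436741}, applied to the dense inclusions $\mathcal{L}\subseteq\mathcal{L}^{\natural}$ and $\mathcal{K}\subseteq\mathcal{K}^{\natural}$. The latter yields two facts we shall use: both $\rho$ and $\sigma$ are injective, and an object $U$ of $\mathcal{L}^{\natural}$ (resp.\ $\mathcal{K}^{\natural}$) is isomorphic to an object of $\mathcal{L}$ (resp.\ $\mathcal{K}$) if and only if its class lies in $\rho(\mathrm{K}_0(\mathcal{L}))$ (resp.\ $\sigma(\mathrm{K}_0(\mathcal{K}))$).

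The inclusion $\rho(\ker\alpha)\subseteq\ker\beta$ is immediate from commutativity. For the reverse inclusion, take $y\in\ker\beta$ and write $y=[X]-[Y]$ with $X,Y\in\mathcal{L}^{\natural}$. By construction of the idempotent completion we may choose $Y^c\in\mathcal{L}^{\natural}$ with $Y':=Y\oplus Y^c\in\mathcal{L}$ (if $Y=(A,e)$ with $A\in\mathcal{L}$ and $e$ an idempotent, take $Y^c=(A,1-e)$). Setting $Z:=X\oplus Y^c\in\mathcal{L}^{\natural}$, we rewrite $y=[Z]-[Y']$ in $\mathrm{K}_0(\mathcal{L}^{\natural})$. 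Applying $\beta$ gives $[Z]=[Y']$ in $\mathrm{K}_0(\mathcal{K}^{\natural})$, and since $Y'\in\mathcal{L}\subseteq\mathcal{K}$ this common class lies in $\sigma(\mathrm{K}_0(\mathcal{K}))$. Thomason's criterion then produces $Z'\in\mathcal{K}$ with $Z'\cong Z$ in $\mathcal{K}^{\natural}$; since $Z\in\mathcal{L}^{\natural}$, this exhibits $Z'$ as an object of $\mathcal{L}^{\natural}\cap\mathcal{K}$, which equals $\mathcal{L}$ by Lemma~\ref{lmacapsubcats}. Hence both $Z'$ and $Y'$ lie in $\mathcal{L}$, so $y=[Z']-[Y']\in\rho(\mathrm{K}_0(\mathcal{L}))$, say $y=\rho(x)$.

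It remains to check $x\in\ker\alpha$, but $\sigma(\alpha(x))=\beta(\rho(x))=\beta(y)=0$ combined with the injectivity of $\sigma$ gives $\alpha(x)=0$, as required.

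The main obstacle is the inclusion $\ker\beta\subseteq\rho(\ker\alpha)$: the hypothesis $\beta(y)=0$ is a $\mathrm{K}$-theoretic relation in $\mathcal{K}^{\natural}$ that a priori involves idempotent summands not visible in $\mathcal{K}$, and one must trade in these summands to find a representative of $y$ whose constituents actually live in $\mathcal{L}$. The trick is to absorb the idempotent complement of $Y$ into $X$ via the direct-sum identity $[X]-[Y]=[X\oplus Y^c]-[Y\oplus Y^c]$, thereby reducing the task to showing that a single object of $\mathcal{L}^{\natural}$ whose class lies in $\sigma(\mathrm{K}_0(\mathcal{K}))$ must in fact be isomorphic to an object of $\mathcal{L}$ — which is precisely the content of Thomason's theorem combined with Lemma~\ref{lmacapsubcats}.
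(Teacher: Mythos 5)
Your proof is correct and follows essentially the same route as the paper: Thomason's classification of dense subcategories detects membership in $\mathcal{K}$ via $\mathrm{K}_0$-classes, Lemma~\ref{lmacapsubcats} then forces the object into $\mathcal{L}$, and injectivity of $\sigma$ finishes. The only cosmetic difference is that the paper represents the class in $\ker\beta$ by a single object (as one always can in a triangulated category, since $-[b]=[\Sigma b]$), whereas you carry a difference $[X]-[Y]$ and absorb the idempotent complement by hand.
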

\begin{proof}
	By the commutativity of the diagram, it is clear that~$\ker(\beta) \supseteq \rho(\ker(\alpha))$, so let us prove the converse inclusion. Consider an element~$[a] \in \ker(\beta)$, i.e.~$[a] = 0$ in~$\Kzero(\mathcal{K}^{\natural})$. By Thomason's classification of dense triangulated subcategories (see~\cite{MR1436741}) applied to~$\mathcal{K} \subset \mathcal{K}^{\natural}$, we have
	\begin{equation}
    \mathcal{K} = \lbrace x \in \mathcal{K}^{\natural}: [x] \in \im(\sigma) \rbrace.
  \end{equation}
	Since~$0 \in \im(\sigma)$, we must have~$a \in \mathcal{K} \subset \mathcal{K}^{\natural}$, and by \cref{lmacapsubcats} it follows that~$a \in \mathcal{L}$. Thus,~$[a] \in \im(\rho)$ and since~$\sigma$ is injective (see~\cite[corollary 2.3]{MR1436741}), it follows that~$[a] \in \ker(\alpha)$.
\end{proof}

\begin{proof}[Proof of \cref{propchowexseq}]
	Consider the commutative diagram
	\begin{equation}
    \begin{tikzcd}
      \Kzero(\mathcal{K}_{(p)}^\compact) \arrow[r, "i"] \arrow[d, "q"] & \Kzero(\mathcal{K}_{(p+1)}^\compact) \arrow[d, "h"] \\
      \Kzero\left(\mathcal{K}_{(p)}^\compact/\mathcal{K}_{(p-1)}^\compact\right) \arrow[r, "k"] \arrow[d, "j"] & \Kzero\left(\mathcal{K}_{(p+1)}^\compact/\mathcal{K}_{(p-1)}^\compact\right) \arrow[d, "l"] \\
		  {\underbrace{\Kzero\left(\left(\mathcal{K}_{(p)}^\compact/\mathcal{K}_{(p-1)}^\compact\right)^{\natural}\right)}_{ = \ZZ^{\Delta}_p(\mathcal{T}, \mathcal{K})}} \arrow[r, "\iota"] & {\underbrace{\Kzero\left(\left(\mathcal{K}_{(p+1)}^\compact/\mathcal{K}_{(p-1)}^\compact\right)^{\natural}\right)}_{ = \Kzero\left(\left(\mathcal{K}_{(p+1)}/\mathcal{K}_{(p-1)}\right)^\compact\right)}}
    \end{tikzcd}
	  \label{eqncommdiagsubcats}
	\end{equation}
	where all maps are induced by inclusions of subcategories or Verdier quotient functors and in particular, we have~$q^{\natural} = j \circ q$. Since~$\ker(h) = i(\ker(q))$, we obtain that~$\ker(k) = q(\ker(i))$. Therefore, it suffices to show that~$\ker(\iota) = j(\ker(k))$, which follows from \cref{lmaidcompkers}. The last statement of the proposition is \cref{lmaidcompsurj}.
\end{proof}

\begin{remark}
	When~$\dim(\Spc(\mathcal{T}^\compact)) =1$, \cref{propchowexseq} exhibits~$\CH^{\Delta}_0(\mathcal{T},\mathcal{K})$ as a subgroup of~$\Kzero(\mathcal{K}^\compact)$. If~$X$ is a regular algebraic curve,~$\mathcal{T} = \mathcal{K} =\derived(\Qcoh(\mathcal{O}_X))$, then we recover the well-known isomorphism
	\begin{equation}
    \Kzero(X) \cong \CH_0(X) \oplus \ZZ_1(X)
  \end{equation}
  using \cref{thmchowrecover}: the map~$\pi$ is surjective by \cref{lmaidcompsurj}, since
	\begin{align*}
	\derived^\perf(\coh(X))_{(1)}/\derived^\perf(\coh(X))_{(0)} &\cong \derived^\bounded(\coh(X))/\derived^\bounded(\coh(X))_{(0)} \\
	&\cong \derived^\bounded(\coh(X)/\coh(X)_{\leq 0}),
	\end{align*}
  (see~\cite[\S3.2]{MR3423452}, compare \cref{corverdiervsserre}) and the latter category is idempotent complete since it is the bounded derived category of an abelian category (see~\cite{MR1813503}). Furthermore,~$\ZZ_1(X)$ is free abelian and hence the exact sequence splits. Again, as in \cref{remcoderived}, we can drop the regularity assumption and consider the action of~$\derived(\Qcoh(\mathcal{O}_X))$ on~$\mathrm{K}(\Inj X)$ instead. Then we obtain
	\begin{equation}
    \mathrm{G}_0(X) \cong \CH_0(X) \oplus \ZZ_1(X).
  \end{equation}
\end{remark}

\section{Derived categories of quasi-coherent \texorpdfstring{$\mathcal{O}_X$}{OX}-algebras}
\label{sectiondercatalg}

In this section, we first recall some well-known facts about the category of quasi-coherent right~$\mathcal{A}$-modules~$\Qcoh(\mathcal{A})$, and its derived category~$\derived(\Qcoh(\mathcal{A}))$. We show how to realize the functor~$\derived(\Qcoh(\mathcal{A})) \to \derived(\Catmod(\mathcal{A}_x))$ that takes stalks at~$x \in X$ as a localization of~$\derived(\Qcoh(\mathcal{A}))$ and prove a technical result about the filtration of~$\derived^\bounded(\coh(\mathcal{A}))$ by dimension of support. At this point we will not need to assume that~$\mathcal{A}$ is coherent, quasi-coherence is enough. Starting from \cref{sectionrelgroupsalg} we will impose the coherence condition to make the action well-behaved on the level of compact objects, and to make the different notions of support agree.

\subsection{Basics of quasi-coherent modules over quasi-coherent \texorpdfstring{$\mathcal{O}_X$}{O\textunderscore X}-algebras}
Let~$X$ be a scheme. In this section we recall some basic facts about modules over an~$\mathcal{O}_X$-algebra~$\mathcal{A}$. The material we present here should be well-known (or at least hardly surprising) to most experts.

An~$\mathcal{O}_X$-algebra~$\mathcal{A}$ is a sheaf of~$\mathcal{O_X}$-modules~$\mathcal{A}$ together with a multiplication map~$\mathcal{A} \times \mathcal{A} \to \mathcal{A}$ that is associative and has unit, and is~$\mathcal{O}_X$-bilinear\footnote{This last condition implies that~$\mathcal{O}_X$ acts centrally on~$\mathcal{A}$.}. An~$\mathcal{O}_X$-algebra~$\mathcal{A}$ is \emph{quasi-coherent}, if it is so as an~$\mathcal{O}_X$-module. The pair~$(X,\mathcal{A})$ is a ringed space, and hence it makes sense to talk about quasi-coherent right~$\mathcal{A}$-modules.  It is not hard to show that if~$\mathcal{A}$ is a quasi-coherent~$\mathcal{O}_X$-algebra, then a right~$\mathcal{A}$-module is quasi-coherent if and only if it is quasi-coherent as an~$\mathcal{O}_X$-module. Furthermore, quasi-coherent right~$\mathcal{A}$-modules over a quasi-coherent~$\mathcal{O}_X$-algebra~$\mathcal{A}$ have a local description analogous to quasi-coherent~$\mathcal{O}_X$-modules.
\begin{proposition}[see {\cite[proposition~1.15]{MR2244264}}]
	Let ~$\mathcal{A}$ be a quasi-coherent~$\mathcal{O}_X$-algebra,~$U \subset X$ an affine open and~$A \coloneqq \Gamma(U,\mathcal{A})$. Then the functor~$\Gamma(U,-)$ induces an equivalence of categories
	\begin{equation}
    \lbrace \text{quasi-coherent right~$\mathcal{A}|_{U}$-modules} \rbrace \xrightarrow{\sim} \lbrace \text{right~$A$-modules} \rbrace.
  \end{equation}
\end{proposition}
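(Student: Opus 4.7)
The plan is to reduce the claim to the classical equivalence for quasi-coherent $\mathcal{O}_U$-modules on an affine scheme by observing that the right $\mathcal{A}|_U$-module structure is merely an enrichment of an underlying right $\mathcal{O}_U$-module structure. Write $R := \Gamma(U, \mathcal{O}_X)$, so that $R \to A$ is the structure map making $A$ an associative $R$-algebra (with $R$ acting centrally, since $\mathcal{A}$ is a quasi-coherent $\mathcal{O}_X$-algebra). The classical equivalence then supplies us with a quasi-inverse $\widetilde{(-)}$ to $\Gamma(U, -)$ on the level of quasi-coherent $\mathcal{O}_U$-modules, and we only need to promote this to an equivalence between right $\mathcal{A}|_U$-modules and right $A$-modules.

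The candidate quasi-inverse is constructed as follows. Given a right $A$-module $M$, restrict scalars along $R \to A$ and form $\widetilde{M}$ as a quasi-coherent $\mathcal{O}_U$-module. The multiplication map $M \otimes_R A \to M$ is $R$-bilinear, so applying $\widetilde{(-)}$ and using that tilde commutes with tensor products on an affine scheme yields a map $\widetilde{M} \otimes_{\mathcal{O}_U} \mathcal{A}|_U \to \widetilde{M}$ which, by functoriality of tilde, is associative and unital. This defines a quasi-coherent right $\mathcal{A}|_U$-module structure on $\widetilde{M}$. The assignment $M \mapsto \widetilde{M}$ is visibly functorial.

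It remains to check that $\Gamma(U,-)$ and $\widetilde{(-)}$ are quasi-inverses. For a right $A$-module $M$, the unit $M \xrightarrow{\sim} \Gamma(U, \widetilde{M})$ is the classical unit of the commutative equivalence, and one checks it is a map of $A$-modules because both the left-hand side structure and the right-hand side structure are determined by the same multiplication map via the tilde construction. Conversely, for a quasi-coherent right $\mathcal{A}|_U$-module $\mathcal{M}$, the counit $\widetilde{\Gamma(U, \mathcal{M})} \xrightarrow{\sim} \mathcal{M}$ is an isomorphism of quasi-coherent $\mathcal{O}_U$-modules by the classical case, and one verifies it is $\mathcal{A}|_U$-linear by comparing the two action maps on sections over any affine open $V \subset U$, where both reduce to the action on global sections of $\mathcal{M}|_V$.

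The only slightly subtle point, and what I would view as the main obstacle, is verifying that the natural transformations upgrading the equivalence from $\mathcal{O}_U$-modules to $\mathcal{A}|_U$-modules are compatible with the algebra action rather than only with the underlying ring action. This amounts to a diagram chase using the naturality of the classical unit/counit together with the fact that tilde sends the $R$-bilinear multiplication $M \otimes_R A \to M$ to the sheafified action map; none of it requires any commutativity of $\mathcal{A}$. Alternatively, since the statement is purely formal given the affine case, one may invoke \cite[Proposition~1.15]{MR2244264} directly.
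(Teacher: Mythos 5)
Your argument is correct, and it is essentially the paper's own treatment: the paper offers no proof of this proposition, deferring entirely to \cite[Proposition~1.15]{MR2244264}, and your reduction to the classical affine equivalence together with the compatibility of $\widetilde{(-)}$ with $\otimes$ is precisely how that reference argues. The one ingredient you use implicitly is the fact recorded in the paper just before the proposition---that a right $\mathcal{A}|_U$-module is quasi-coherent if and only if its underlying $\mathcal{O}_U$-module is---which is what lets you identify quasi-coherent right $\mathcal{A}|_U$-modules with right $\mathcal{A}|_U$-module objects in $\mathrm{Qcoh}(\mathcal{O}_U)$ and then apply the classical unit and counit.
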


Since the notion of coherence is general as well, it applies to right~$\mathcal{A}$-modules. We shall primarily be interested in the case where~$X$ is noetherian and~$\mathcal{A}$ is a \emph{coherent~$\mathcal{O}_X$-algebra}, i.e.~one that is coherent as an~$\mathcal{O}_X$-module.
\begin{lemma}
	Suppose~$X$ is noetherian and~$\mathcal{A}$ is a coherent~$\mathcal{O}_X$-algebra. Then a right~$\mathcal{A}$-module~$M$ is coherent if and only if it is coherent as an~$\mathcal{O}_X$-module.
\end{lemma}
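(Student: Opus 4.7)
The plan is to reduce the claim to the affine situation and then to a ring-theoretic statement about finite generation. Since coherence of a sheaf is a local property, I would cover $X$ by affine opens $U = \Spec R$ with $R$ noetherian and set $A := \Gamma(U, \mathcal{A})$. The hypothesis that $\mathcal{A}$ is coherent as an $\mathcal{O}_X$-module translates, on each such $U$, to $A$ being a finitely generated $R$-module; since $R$ is commutative noetherian, this makes $A$ a module-finite $R$-algebra, hence a noetherian ring (both left and right). In particular, on such $U$ the notion ``coherent right $A$-module'' coincides with ``finitely generated right $A$-module''.

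Next, I would invoke the equivalence of the previous proposition: $\Gamma(U, -)$ identifies quasi-coherent right $\mathcal{A}|_U$-modules with right $A$-modules, and quasi-coherent $\mathcal{O}_X|_U$-modules with right $R$-modules. Because both $R$ and $A$ are noetherian, coherence of a quasi-coherent sheaf on either side corresponds to finite generation of the associated module of sections. Thus the lemma reduces to the purely ring-theoretic statement: a right $A$-module $M$ is finitely generated over $A$ if and only if it is finitely generated over $R$.

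The implication ``finitely generated over $R$ implies finitely generated over $A$'' is immediate from the structure map $R \to A$. For the converse I would exploit that $A$ is itself finitely generated as an $R$-module: if $m_1, \dots, m_s$ generate $M$ over $A$ and $a_1, \dots, a_t$ generate $A$ over $R$, then the finite family $\{m_i a_j\}_{i,j}$ generates $M$ over $R$, using that $R$ acts centrally on $A$ (so that right multiplication by $r \in R$ on $m_i a_j$ equals $m_i (a_j r) = m_i (r a_j)$, giving an honest $R$-linear span).

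I do not anticipate any serious obstacle; this is essentially bookkeeping once the coherence-equals-finite-generation dictionary is available on both sides. The only point requiring mild care is the noncommutativity of $A$: the central action of $\mathcal{O}_X$ on $\mathcal{A}$ (built into the definition of an $\mathcal{O}_X$-algebra) is exactly what is needed to turn a finite $A$-generating set of $M$ together with a finite $R$-generating set of $A$ into a finite $R$-generating set of $M$.
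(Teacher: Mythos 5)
Your proposal is correct and follows essentially the same route as the paper's sketch: observe that coherence reduces to finite generation because $\mathcal{A}$ is a sheaf of right-noetherian rings (being module-finite over a noetherian base), and then check the purely module-theoretic equivalence of finite generation over $A$ and over $R$ locally. Your explicit generating set $\{m_i a_j\}$ and the remark about centrality of the $R$-action simply fill in the step the paper calls ``straightforward.''
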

\begin{proof}[Sketch of the proof]
	Let us first notice that under the given conditions,~$\mathcal{A}$ is a sheaf of right-noetherian rings. A right~$\mathcal{A}$-module is hence coherent if and only if it is locally of finite type. Therefore, it suffices to show that a right~$\mathcal{A}$-module is locally of finite type over~$\mathcal{A}$ if and only if it is so over~$\mathcal{O}_X$, which is straightforward.
\end{proof}

\begin{proposition}
 The category~$\Qcoh(\mathcal{A})$ is Grothendieck abelian.
 \label{corqcohAGrothendieck}
\end{proposition}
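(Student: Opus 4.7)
The plan is to verify the three defining axioms of a Grothendieck abelian category for $\Qcoh(\mathcal{A})$: being abelian with small colimits, exactness of filtered colimits, and existence of a generator. The first two will be ``for free'' by comparison with $\mathrm{Mod}(\mathcal{A})$ and with $\Qcoh(\mathcal{O}_X)$; the only non-routine point is producing a generator.

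For the abelian structure and the exactness of filtered colimits, I would use the fact recalled just before the proposition that a right $\mathcal{A}$-module is quasi-coherent if and only if it is quasi-coherent as an $\mathcal{O}_X$-module. This means that when one computes kernels, cokernels, arbitrary direct sums, and filtered colimits inside $\mathrm{Mod}(\mathcal{A})$, the underlying object of $\mathrm{Mod}(\mathcal{O}_X)$ is given by the analogous construction there, which preserves quasi-coherence. Hence $\Qcoh(\mathcal{A}) \subset \mathrm{Mod}(\mathcal{A})$ is an abelian subcategory closed under small colimits, and exactness of filtered colimits follows from the corresponding statement in sheaves of abelian groups.

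For the generator, the idea is to transport one from $\Qcoh(\mathcal{O}_X)$ along the extension/restriction of scalars adjunction
\[F\colon \Qcoh(\mathcal{O}_X) \to \Qcoh(\mathcal{A}),\qquad F(M) := M \otimes_{\mathcal{O}_X} \mathcal{A},\]
which is left adjoint to the forgetful functor $U\colon \Qcoh(\mathcal{A}) \to \Qcoh(\mathcal{O}_X)$. One needs to check that $F$ lands in $\Qcoh(\mathcal{A})$, which is immediate since $M \otimes_{\mathcal{O}_X} \mathcal{A}$ is again quasi-coherent over $\mathcal{O}_X$ and inherits a right $\mathcal{A}$-action from right multiplication in the second tensorand. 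Granting the classical fact that $\Qcoh(\mathcal{O}_X)$ is Grothendieck abelian (for our noetherian $X$ this is standard), pick a generator $G$. For any non-zero $M \in \Qcoh(\mathcal{A})$ the object $U(M)$ is non-zero, so there is a non-zero morphism $G \to U(M)$, which corresponds by adjunction to a non-zero morphism $F(G) \to M$. Hence $F(G)$ is a generator of $\Qcoh(\mathcal{A})$.

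The only real obstacle is the appeal to $\Qcoh(\mathcal{O}_X)$ being Grothendieck abelian, which I would invoke as a citation rather than reprove. Everything else is formal: the heart of the argument is simply that the forgetful functor is exact, conservative, and admits an exact left adjoint produced by tensoring with~$\mathcal{A}$.
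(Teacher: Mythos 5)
Your route is genuinely different from the paper's: the paper disposes of the whole statement in one line by observing that $\Qcoh(\mathcal{A})$ is the category of modules over the right-exact monad attached to the adjunction $\mathcal{A}\otimes_{\mathcal{O}_X}-\dashv U$ on the Grothendieck category $\Qcoh(\mathcal{O}_X)$, and then citing a general lemma that module categories over such monads remain Grothendieck. You instead verify the axioms by hand, using that quasi-coherence of an $\mathcal{A}$-module can be tested after forgetting to $\mathcal{O}_X$, so that $\Qcoh(\mathcal{A})$ is closed under the relevant (co)limits of $\mathrm{Mod}(\mathcal{A})$, and you transport a generator along the extension-of-scalars adjunction. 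Both arguments ultimately rest on the same external input, namely that $\Qcoh(\mathcal{O}_X)$ is Grothendieck; yours is more self-contained, the paper's is shorter and reusable.

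There is, however, one genuine (if small) gap in your generator step. You conclude that $F(G)$ is a generator from the property that $\Hom(F(G),M)\neq 0$ for every nonzero $M$. That is only the \emph{weak} generator property, and it does not in general imply the generator property required in the definition of a Grothendieck category (that $\Hom(F(G),-)$ be faithful, equivalently that every object be a quotient of a coproduct of copies of $F(G)$). For instance, over $k[x]/(x^2)$ the simple module $k$ admits a nonzero map to every nonzero module, yet it is not a generator of $\mathrm{Mod}(k[x]/(x^2))$. The repair is immediate and uses exactly the adjunction you set up: $\Hom_{\mathcal{A}}(F(G),-)\cong\Hom_{\mathcal{O}_X}(G,U(-))$ is the composite of the faithful functor $U$ (it is the identity on underlying sheaves) with the faithful functor $\Hom_{\mathcal{O}_X}(G,-)$, hence is itself faithful, and $F(G)$ is a generator in the required sense. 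With that correction your proof is complete.
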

\begin{proof}
  The category~$\Qcoh(\mathcal{A})$ is exactly the category of modules over the right-exact monad corresponding to the adjunction~$\mathcal{A} \otimes_{\mathcal{O}_X} - \dashv U$. Then~\cite[lemma~A.3]{MR3161097} applies and shows that~$\Qcoh(\mathcal{A})$ is Grothendieck abelian, since~$\Qcoh(\mathcal{O}_X)$ is so.
\end{proof}

The following notion is central for our further considerations:
\begin{definition}
	Let~$M \in \Qcoh(\mathcal{O}_X)$. The \emph{support~$\Supp(M)$ of~$M$} is the set of points~$P \in X$ such that~$M_P \neq 0$. If~$N \in \Qcoh(\mathcal{A})$, then~$\Supp(N) \coloneqq \Supp(U(N)) \subset X$.
\end{definition}

\subsection{The derived category of a quasi-coherent \texorpdfstring{$\mathcal{O}_X$}{O\textunderscore X}-algebra}
\label{subsection:derived-quasi-coherent-algebras}
In the following, \emph{we shall always assume that~$X$ is a noetherian separated scheme} and that~$\mathcal{A}$ is a quasi-coherent~$\mathcal{O}_X$-algebra. These are not the strongest possible assumptions for (most of) the results in this section, but in \cref{sectionrelgroupsalg} we will need these (and stronger) conditions to develop the machinery of relative tensor triangular Chow groups.
\subsubsection{Basic properties}
In this section we study the category~$\derived(\Qcoh(\mathcal{A}))$, the derived category of quasi-coherent right-$\mathcal{A}$-modules. Let us first note that~$\derived(\Qcoh(\mathcal{A}))$ exists, since~$\Qcoh(\mathcal{A})$ is Grothendieck abelian by \cref{corqcohAGrothendieck}. Furthermore, since the forgetful functor~$U$ is exact, it directly descends to give a functor~$U : \derived(\Qcoh(\mathcal{A}))\to \derived(\Qcoh(\mathcal{O}_X))$. Its right adjoint~$\mathcal{A} \otimes_{\mathcal{O}_X} -$ induces a left-derived functor
\begin{equation}
  \mathcal{A} \otimes^\LLL_{\mathcal{O}_X} - : \derived(\Qcoh(\mathcal{O}_X)) \to \derived(\Qcoh(\mathcal{A}))
\end{equation}
which is computed by first taking K-flat resolutions in~$\derived(\Qcoh(\mathcal{O}_X))$ and then applying~$\mathcal{A} \otimes_{\mathcal{O}_X} -$.
\begin{proposition}
	There is an adjunction~$(\mathcal{A} \otimes^\LLL_{\mathcal{O}_X} -) \dashv U$.
\end{proposition}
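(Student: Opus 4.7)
The plan is to lift the underived adjunction $(\mathcal{A}\otimes_{\mathcal{O}_X}-) \dashv U$ from the abelian categories to the derived level, using exactness of $U$ together with Grothendieck-abelianness of both $\mathrm{Qcoh}(\mathcal{O}_X)$ and $\mathrm{Qcoh}(\mathcal{A})$ (the latter by Corollary~\ref{corqcohAGrothendieck}). First I would extend the underived adjunction degreewise to an adjunction between the chain complex categories and descend it to the homotopy categories $\mathrm{K}(\mathrm{Qcoh}(\mathcal{O}_X))$ and $\mathrm{K}(\mathrm{Qcoh}(\mathcal{A}))$, since both functors respect chain homotopies. Because $U$ is exact it preserves quasi-isomorphisms and descends directly to the derived categories, while $\mathcal{A}\otimes_{\mathcal{O}_X}-$ is only right exact and must be left-derived via K-flat resolutions, which exist by the standard Spaltenstein-type construction on the noetherian separated scheme $X$.

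To verify the derived adjunction I would take $F \in \mathrm{D}(\mathrm{Qcoh}(\mathcal{O}_X))$ and $M \in \mathrm{D}(\mathrm{Qcoh}(\mathcal{A}))$, fix a K-flat resolution $P \xrightarrow{\sim} F$ and a K-injective resolution $M \xrightarrow{\sim} I$, and assemble the chain of isomorphisms
\begin{align*}
\mathrm{Hom}_{\mathrm{D}(\mathrm{Qcoh}(\mathcal{A}))}(\mathcal{A}\otimes_{\mathcal{O}_X} P, M)
&\cong \mathrm{Hom}_{\mathrm{K}(\mathrm{Qcoh}(\mathcal{A}))}(\mathcal{A}\otimes_{\mathcal{O}_X} P, I) \\
&\cong \mathrm{Hom}_{\mathrm{K}(\mathrm{Qcoh}(\mathcal{O}_X))}(P, U(I)) \\
&\cong \mathrm{Hom}_{\mathrm{D}(\mathrm{Qcoh}(\mathcal{O}_X))}(F, U(M)),
\end{align*}
where the first isomorphism uses K-injectivity of $I$, the second is the chain-level adjunction applied to $I$, and the third combines the quasi-isomorphism $U(M) \xrightarrow{\sim} U(I)$ (from exactness of $U$) with the passage from homotopy to derived Hom out of $P$.

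The delicate step is this last identification: one needs $U(I)$ to be K-injective in $\mathrm{K}(\mathrm{Qcoh}(\mathcal{O}_X))$, which by the chain-level adjunction reduces to acyclicity of $\mathcal{A}\otimes_{\mathcal{O}_X} C$ for every acyclic $C \in \mathrm{K}(\mathrm{Qcoh}(\mathcal{O}_X))$. This is automatic when $\mathcal{A}$ is $\mathcal{O}_X$-flat but is the main obstacle in general. My preferred workaround is to invoke Brown representability: $\mathrm{D}(\mathrm{Qcoh}(\mathcal{A}))$ is well-generated and $U$ preserves both coproducts and products (being exact and forgetful), so it admits a left adjoint $L$, which one then identifies with $\mathcal{A}\otimes^{\mathrm{L}}_{\mathcal{O}_X}-$ by checking that both coproduct-preserving triangulated functors agree on a set of compact generators of $\mathrm{D}(\mathrm{Qcoh}(\mathcal{O}_X))$.
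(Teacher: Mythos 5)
Your main chain of isomorphisms has exactly the gap you flag, and it is fatal in the generality of the proposition: for non-affine $X$ the category $\mathrm{Qcoh}(\mathcal{O}_X)$ has no nonzero projectives, so a K-flat $P$ is not K-projective and the comparison map $\mathrm{Hom}_{\mathrm{K}}(P,U(I))\to\mathrm{Hom}_{\mathrm{D}}(P,U(I))$ need not be an isomorphism (already for $P=\mathcal{O}_X$ it compares $\mathrm{H}^0$ of naive global sections with hypercohomology); and $U(I)$ is K-injective only when $\mathcal{A}\otimes_{\mathcal{O}_X}-$ is exact, i.e.\ when $\mathcal{A}$ is flat --- already $\mathcal{A}=\mathcal{O}_Z$ for a closed subscheme $Z\subset X$ is a counterexample. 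The proposed workaround does not close the gap. First, the claim that $U$ preserves products \emph{in the derived categories} does not follow from exactness: $\mathrm{Qcoh}$ of a non-affine scheme is not AB4*, so derived products are not computed termwise but via K-injective resolutions, and commuting with termwise products (which $U$ does, being a right adjoint on the abelian level) says nothing --- this is the same difficulty reappearing in disguise. Second, even granting a left adjoint $L\dashv U$ from dual Brown representability, ``checking that $L$ and $\mathcal{A}\otimes^{\mathrm{L}}_{\mathcal{O}_X}-$ agree on compact generators'' is not a proof: two coproduct-preserving exact functors are not determined objectwise by their values on generators. You must first construct a natural transformation between them (say, the mate under $L\dashv U$ of the underived unit $\mathrm{id}\to U(\mathcal{A}\otimes-)$) and then verify it is invertible on a generating set; for the generator $\mathcal{O}_X$ that verification is essentially an instance of the adjunction being proved.

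The proposition is instead an instance of the general criterion for adjointness of derived functors, which is what the paper invokes (Stacks Project, tag 09T5): if $G\dashv F$ is an adjunction of abelian categories and $\mathbf{L}G$, $\mathbf{R}F$ are defined at the relevant objects, then there is a canonical adjunction $\mathbf{L}G\dashv\mathbf{R}F$. Here $F=U$ is exact, so $\mathbf{R}U=U$ is defined everywhere, and $\mathbf{L}(\mathcal{A}\otimes_{\mathcal{O}_X}-)$ is defined everywhere via K-flat resolutions; the criterion requires no flatness of $\mathcal{A}$, no K-projectives, and no Brown representability. (In the affine case your direct argument does go through, since there one may choose $P$ K-projective.)
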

\begin{proof}
  This follows since the derived functors of an adjoint pair, if they exist, are again adjoint \cite[tag~09T5]{stacks-project}.
\end{proof}

\begin{theorem}
	The category~$\derived(\Qcoh(\mathcal{A}))$ is compactly generated, and a complex in~$\derived(\Qcoh(\mathcal{A}))$ is compact if and only if it is perfect, i.e.~it is locally quasi-isomorphic to a bounded complex of projective modules of finite rank.
	\label{thmcompgen}
\end{theorem}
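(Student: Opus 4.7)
The plan is to transport the known compact generation of $\mathrm{D}(\mathrm{Qcoh}(\mathcal{O}_X))$ up to $\mathrm{D}(\mathrm{Qcoh}(\mathcal{A}))$ along the adjunction $(\mathcal{A} \otimes_{\mathcal{O}_X}^{\mathrm{L}} -)\dashv U$ proved just above, and then to identify the resulting compact objects with perfect complexes. The starting input is the theorem of Neeman (with roots in Bondal--Van den Bergh) that, since $X$ is noetherian and separated, $\mathrm{D}(\mathrm{Qcoh}(\mathcal{O}_X))$ is compactly generated and its compact objects are exactly the perfect complexes; concretely one may take a generating set of the form $\{j_!\mathcal{O}_U\}$ for $U$ ranging over a finite affine open cover of $X$.

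Next I would verify the two properties of $U$ that allow descent of generators: (i)~$U$ preserves coproducts, since it is exact and coproducts in both $\mathrm{Qcoh}(\mathcal{A})$ and $\mathrm{Qcoh}(\mathcal{O}_X)$ are computed on underlying sheaves, and (ii)~$U$ is conservative, since the cohomology sheaves of a complex of $\mathcal{A}$\dash modules are the same whether one takes cohomology in $\mathrm{Qcoh}(\mathcal{A})$ or in $\mathrm{Qcoh}(\mathcal{O}_X)$. By the standard adjunction lemma, the left adjoint $\mathcal{A}\otimes^{\mathrm{L}}_{\mathcal{O}_X}-$ therefore preserves compact objects, and sends a set of compact generators to a set of compact generators. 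Applying it to a set of perfect generators of $\mathrm{D}(\mathrm{Qcoh}(\mathcal{O}_X))$ produces complexes that, by construction, are locally quasi-isomorphic to bounded complexes of finite-rank free $\mathcal{A}$\dash modules, hence are perfect in the sense of the statement; this establishes compact generation of $\mathrm{D}(\mathrm{Qcoh}(\mathcal{A}))$ by perfect complexes.

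For the equality \emph{compact = perfect}, I would argue both inclusions. The class of perfect complexes is a thick triangulated subcategory, so the thick hull of the compact generators constructed above, which by Neeman--Ravenel coincides with $\mathrm{D}(\mathrm{Qcoh}(\mathcal{A}))^c$, is contained in the perfect complexes; this gives $\text{compact}\subseteq\text{perfect}$. For the converse, perfect complexes are compact: the question is local on $X$, and over an affine open $U=\Spec R$ with $A=\Gamma(U,\mathcal{A})$ a finite-type $R$\dash algebra, perfect = strict perfect in $\mathrm{D}(\Mod A)$ is compact by the classical argument (a bounded complex of finitely generated projectives represents a functor that commutes with arbitrary coproducts). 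The globalization from affine opens to all of $X$ is the step that needs the most care: one covers $X$ by finitely many affines (using noetherianness) and glues via a Mayer--Vietoris / Čech spectral sequence (using separatedness so that intersections remain affine) to conclude that $\Hom_{\mathrm{D}(\mathrm{Qcoh}(\mathcal{A}))}(P,-)$ commutes with coproducts for perfect $P$.

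The main obstacle is this last gluing step: one has to be sure the Mayer--Vietoris/Čech argument for $\mathrm{R}\Hom$ commuting with coproducts that works in the commutative case transfers to $\mathcal{A}$\dash modules. This is unproblematic because the underlying topological space, its affine cover, and the sheaf-theoretic machinery of $\check{\mathrm{C}}$ech descent depend only on $X$ and on $\mathcal{A}$ being quasi-coherent as an $\mathcal{O}_X$\dash module, not on commutativity; alternatively one can cite a general statement from Lunts--Schnürer or from the theory of sheaves of dg\dash algebras that treats compact generation and the characterization of compact objects for $\mathrm{D}(\mathrm{Qcoh}(\mathcal{A}))$ directly, which is really what this theorem comes down to.
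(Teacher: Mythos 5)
Your argument is correct, but it is genuinely more explicit than the paper's proof, which consists of a single citation to Rouquier's cocovering technique (\cite[theorem~3.14]{1501.06023}) that handles compact generation and the identification of compacts in one stroke. Your decomposition is instructive: you observe that compact generation by perfect complexes is purely \emph{formal}, following from the adjunction $(\mathcal{A}\otimes^{\mathrm{L}}_{\mathcal{O}_X}-)\dashv U$ together with the facts that $U$ preserves coproducts and is conservative -- no geometry of $X$ enters at this stage. The geometric input (finite affine cover, separatedness) is then isolated in the single remaining step, namely that every perfect complex is compact; this cannot be deduced from the thick-closure argument, since a general perfect $\mathcal{A}$\dash complex need not lie in the thick subcategory generated by the objects $\mathcal{A}\otimes^{\mathrm{L}}_{\mathcal{O}_X}P$, and your \v{C}ech/Mayer--Vietoris reduction to the affine case is exactly what Rouquier's cocoverings axiomatize. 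So the hard step is the same underlying technique as in the cited reference; what your route buys is a clear separation of the formal part from the local-to-global part. Two small inaccuracies that do not affect the argument: the objects $j_!\mathcal{O}_U$ are not quasi-coherent (and not perfect) in general, so they are not the generators one uses -- but since every compact object of $\mathrm{D}(\mathrm{Qcoh}(\mathcal{O}_X))$ is perfect, any choice of compact generators works; and $\Gamma(U,\mathcal{A})$ need not be of finite type over $R$ under the standing hypothesis that $\mathcal{A}$ is merely quasi-coherent, but the classical compactness of bounded complexes of finitely generated projectives over an arbitrary ring does not require this.
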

\begin{proof}
  This can be shown using Rouquier's cocoverings. See~\cite[theorem~3.14]{MR3695056}.
\end{proof}

\begin{convention}
In the following, we shall denote the full subcategory of perfect complexes over~$\mathcal{A}$ by~$\derived^\perf(\mathcal{A}) \subset \derived(\Qcoh(\mathcal{A}))$. Whenever~$S \subset |X|$ is a subset, we shall denote by~$\derived_S(\mathcal{A})$ ($\derived^\bounded_S(\coh(\mathcal{A})), \derived^{\mathrm{perf}}_S(\mathcal{A})$) the corresponding full subcategories consisting of complexes~$C^{\bullet}$ with~$\Supp(\HH^\bullet(C^{\bullet})) \subset S$. If~$S=V_{\leq p}$, the subset of all points of dimension~$\leq p$, we shall replace the subscript ``$V_{\leq p}$'' by~``$\leq p$''.
\label{convsubcatfilt}
\end{convention}

\subsubsection{Taking stalks}
Let us consider a point~$x \in X$ and the inclusion~$\Spec\mathcal{O}_{X,x}\to X$. If we equip~$\Spec \mathcal{O}_{X,x}$ with the sheaf of rings~$\mathcal{A}_x$, we obtain a morphism of ringed spaces
\begin{equation}
  i_x\colon(\Spec \mathcal{O}_{X,x}, \mathcal{A}_x) \to (X,\mathcal{A})
\end{equation}
and the general theory of ringed spaces gives us a pair of adjoint functors
\begin{equation}
	\begin{tikzcd}
		\Catmod(\mathcal{A}_x) \arrow[bend right,swap]{d}{(i_x)_*}\\
		\Qcoh(\mathcal{A}) \arrow[bend right,swap]{u}{(i_x)^*}
	\end{tikzcd}
\end{equation}
which fits into a commutative diagram
\begin{equation}
	\begin{tikzcd}
		\Catmod(\mathcal{O}_{X,x}) \arrow[bend right,swap]{d}{(i_x)_*} &  \Catmod(\mathcal{A}_x) \arrow[bend right, swap]{d}{(i_x)_*} \arrow[swap,]{l}{U}\\
		\Qcoh(\mathcal{O}_X) \arrow[bend right,swap]{u}{(i_x)^*} &\Qcoh(\mathcal{A}) \arrow[bend right, swap]{u}{(i_x)^*}  \arrow{l}{U}
	\end{tikzcd}
\end{equation}
and satisfies~$(i_x)^* \circ (i_x)_* = \identity$. The map~$\Spec\mathcal{O}_{X,x}\to X$ is quasi-separated and quasi-compact (recall that we assumed that~$X$ noetherian). Therefore the functor~$(i_x)_*$ indeed produces quasi-coherent~$\mathcal{O}_X$-modules, and hence also quasi-coherent~$\mathcal{A}$-modules, since quasi-coherence can be checked after applying~$U$.

Since~$X$ was separated, the map~$i_x$ is affine and thus the functor~$(i_x)_*$ is exact on the level of~$\mathcal{O}_{X,x}$-modules. Since~$U$ preserves and reflects exactness, it follows that~$(i_x)_*$ is exact on the level of~$\mathcal{A}_x$-modules as well. Furthermore, the map~$\Spec \mathcal{O}_{X,x}\to X$ is flat and hence~$(i_x)^*$ is exact on both levels as well.

Since the derived functors of an adjoint pair are again adjoint \cite[tag~09T5]{stacks-project}, we obtain an adjunction
\begin{equation}
	\begin{tikzcd}
		\derived(\Catmod(\mathcal{A}_x)) \arrow[bend right, swap]{d}{(i_x)_*}\\
		\derived(\Qcoh(\mathcal{A})) \arrow[bend right, swap]{u}{(i_x)^*}
	\end{tikzcd}
\end{equation}
which still satisfies~$(i_x)^* \circ (i_x)_* = \identity$ since there was no need to derive any of the two functors.

\begin{proposition}
	Let~$X$ be a noetherian separated scheme and~$\mathcal{A}$ a quasi-coherent~$\mathcal{O}_X$-algebra. Let~$x \in X$ and~$\derived_{Y_x}(\mathcal{A}) \subset \derived(\Qcoh(\mathcal{A}))$ be the full subcategory of complexes~$C^{\bullet}$ such that~$\Supp(\HH^\bullet(C^{\bullet})) \subset Y_x = \lbrace y \in X | x \notin \overline{\lbrace y \rbrace} \rbrace$. Then~$\derived_{Y_x}(\mathcal{A}) \cong \ker(i_x)^*$ and the functor~$(i_x)^*$ induces an exact equivalence
	\begin{equation}
    \derived(\Qcoh(\mathcal{A}))/\derived_{Y_x}(\mathcal{A}) \xrightarrow{\sim} \derived(\Catmod(\mathcal{A}_x)).
  \end{equation}
	\label{propstalkquot}
\end{proposition}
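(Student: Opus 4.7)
The plan is to first identify $\ker((i_x)^*)$ with $\mathrm{D}_{Y_x}(\mathcal{A})$ and then to deduce the equivalence from the standard theory of Bousfield localizations for triangulated categories. Both halves are essentially formal once the exactness of $(i_x)^*$ and the counit identity $(i_x)^* \circ (i_x)_* = \mathrm{id}$ recorded in the text preceding the proposition are in hand.

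For the identification of the kernel, I would use that $(i_x)^*$ is exact at the level of modules (hence descends to the derived category without any further derivation) and consequently commutes with cohomology: $\mathrm{H}^n((i_x)^* C^{\bullet}) \cong \mathrm{H}^n(C^{\bullet})_x$. Thus $C^{\bullet}$ lies in $\ker((i_x)^*)$ if and only if $\mathrm{H}^n(C^{\bullet})_x = 0$ for every $n$, i.e.\ if and only if $x \notin \mathrm{Supp}(\mathrm{H}^n(C^{\bullet}))$ for every $n$. To upgrade this to the support being contained in $Y_x$, I would invoke specialization-closedness of supports of quasi-coherent $\mathcal{O}_X$-modules: whenever $z \in \overline{\lbrace y \rbrace}$, the ring $\mathcal{O}_{X,y}$ is a localization of $\mathcal{O}_{X,z}$, so $M_z = 0$ forces $M_y = 0$. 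The contrapositive then gives that $x \notin \mathrm{Supp}(M)$ is equivalent to $\mathrm{Supp}(M) \subset Y_x$, yielding $\ker((i_x)^*) = \mathrm{D}_{Y_x}(\mathcal{A})$.

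For the equivalence, the identity $(i_x)^* \circ (i_x)_* = \mathrm{id}$ implies by adjunction that $(i_x)_*$ is fully faithful, so $(i_x)^*$ is an exact functor with a fully faithful right adjoint and with kernel $\mathrm{D}_{Y_x}(\mathcal{A})$. By the general theory of triangulated Bousfield localizations (see e.g.\ \cite[Chapter~9]{MR1812507}), such a functor presents its target as the Verdier quotient of its source by the kernel: essential surjectivity is immediate from $M \cong (i_x)^*(i_x)_* M$, and full faithfulness of the induced functor out of the Verdier quotient is the content of the standard Bousfield localization argument.

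The only non-formal ingredient is the specialization-closedness step, which relies on quasi-coherence of the modules involved; the rest follows purely from the adjunction $(i_x)^* \dashv (i_x)_*$ and the exactness of both functors established earlier in the section, so I do not anticipate any genuine obstacle.
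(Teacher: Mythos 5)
Your proposal is correct and follows essentially the same route as the paper: identify $\ker((i_x)^*)$ via the exactness of $(i_x)^*$ and the resulting commutation with cohomology, then deduce the equivalence from full faithfulness of the right adjoint $(i_x)_*$ (forced by $(i_x)^*\circ(i_x)_*=\mathrm{id}$) together with the standard Bousfield--Verdier localization lemma. The only difference is that you spell out the specialization-closedness argument showing $x\notin\mathrm{Supp}(M)\Leftrightarrow\mathrm{Supp}(M)\subset Y_x$, which the paper leaves implicit.
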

\begin{proof}
	The first part follows from the identity~$\HH^\bullet((i_x)^*C^{\bullet}) = (i_x)^*(\HH^\bullet(C^{\bullet}))$.

  Since~$(i_x)^* \circ (i_x)_* = \identity$, we must have that~$(i_x)_*$ is fully faithful. It is well-known (see e.g.~\cite[lemma~3.4]{MR2681712}) that we therefore get an exact sequence of triangulated categories
	\begin{equation}
    \derived(\Qcoh(\mathcal{A}))/\ker(i_x)^* \xrightarrow{\sim} \derived(\Catmod(\mathcal{A}_x)),
  \end{equation}
	which finishes the proof by the first part of the proposition.
\end{proof}

\subsubsection{Filtrations of the bounded derived category of coherent sheaves}
Let us now assume that~$X$ is a noetherian scheme and that~$\mathcal{A}$ is a coherent~$\mathcal{O}_X$-algebra. We record the following, essentially trivial lemma for later use.
\begin{lemma}
  \label{lmaidealannihilate}
	Let~$\mathcal{J} \subset \mathcal{O}_X$ be an ideal sheaf and~$M$ an~$\mathcal{A}$-module. Then
	\begin{equation}
    \mathcal{J} M = 0 \Leftrightarrow (\mathcal{A} \cdot \mathcal{J}) M = 0.
  \end{equation}
\end{lemma}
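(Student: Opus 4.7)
The plan is to reduce the statement to its affine-local version and then verify both implications by bookkeeping with the right $\mathcal{A}$-action. Since both $\mathcal{J}M = 0$ and $(\mathcal{A}\cdot\mathcal{J})M = 0$ are conditions that can be checked locally (in fact on stalks), I would pass to an affine open $U = \Spec R$ on which $\mathcal{A}$ corresponds to an $R$-algebra $A$, the module $M$ to a right $A$-module $N$, and $\mathcal{J}$ to an ideal $J \subseteq R$. The claim then becomes: $J\cdot N = 0$ if and only if $(A\cdot J)\cdot N = 0$, where $J$ acts on $N$ via the structure map $R \to A$.

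The backward implication is essentially definitional: the structure map sends $J$ into $A\cdot J$ via $j \mapsto 1_A\cdot j$, so $J\cdot N \subseteq (A\cdot J)\cdot N = 0$. For the forward implication, I would write an arbitrary local section of $A\cdot J$ as $\sum_i a_i j_i$ with $a_i \in A$ and $j_i \in J$, and apply associativity of the right $A$-action to get
\[n\cdot\Bigl(\sum_i a_i j_i\Bigr) = \sum_i (n\cdot a_i)\cdot j_i,\]
which vanishes because each $n\cdot a_i$ lies in $N$ and is killed by $j_i \in J$ under the standing hypothesis.

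There is no real obstacle here; the content is pure bookkeeping, matching the authors' own description of the lemma as ``essentially trivial''. The one place where the footnote on centrality of the $\mathcal{O}_X$-action on $\mathcal{A}$ enters is in making sense of the notation: centrality ensures that $\mathcal{A}\cdot\mathcal{J}$ is a two-sided ideal and that the left and right $\mathcal{J}$-actions on $M$ coincide, so that the symbol $\mathcal{J}M$ is unambiguous. Once that convention is fixed, the argument above goes through verbatim at the level of sheaves.
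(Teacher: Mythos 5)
Your proof is correct and is precisely the ``easy local computation'' that the paper leaves to the reader: reduce to an affine open, note that $J$ maps into $A\cdot J$ for the backward direction, and use associativity of the right action together with centrality of the $\mathcal{O}_X$-action for the forward direction. Nothing further is needed.
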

\begin{proof}
	Easy local computation.
%	If~$(\mathcal{A} \cdot \mathcal{J}) M = 0$, then it is clear that~$\mathcal{J} M = 0$ as the action of~$\mathcal{J}$ on~$M$ factors through~$\mathcal{A} \cdot \mathcal{J}$. So, assume that~$\mathcal{J} M = 0$. As the problem is local we can furthermore assume that~$J \subset R$ is an ideal,~$A$ is an~$R$-algebra and~$M$ is a~$A$-module such that~$JM = 0$. Any element~$x \in A \cdot J$ can be written as~$a_1j_1 + \cdots + a_nj_n$ with~$a_s \in A, j_s \in J$. If~$m \in M$, we therefore have
%	\begin{equation}x \cdot m = a_1j_1\cdot m + \cdots + a_nj_n \cdot m = 0\end{equation}
%	since~$j_s \cdot m = 0$ for all~$s$ by assumption.
\end{proof}

\begin{definition}
	A sheaf of ideals~$\mathcal{I} \subset \mathcal{A}$ is called \emph{central}, if for any open~$U \subset X$, the ideal~$\mathcal{I}(U) \subset \mathcal{A}(U)$ can be generated by central elements.
\end{definition}

\begin{proposition}
	Let
	\begin{equation}
    0 \to A \to B \to C \to 0
  \end{equation}
	be an exact sequence of coherent~$\mathcal{A}$-modules with~$\Supp(A) = V \subset X$. Then there exists a commutative diagram of~$\mathcal{A}$-modules
	\begin{equation}
	  \begin{tikzcd}
	  	0 \arrow{r} & A \arrow{r} \arrow{d}{\identity} & B \arrow{r} \arrow{d} & C \arrow{d} \arrow{r} & 0 \\
	  	0 \arrow{r} & A \arrow{r} & B' \arrow{r} & C' \arrow{r} & 0
	  \end{tikzcd}
	\end{equation}
	with exact rows and such that~$\Supp(B'), \Supp(C') \subset V$.
	\label{propexseqext}
\end{proposition}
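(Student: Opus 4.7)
The plan is to construct $B'$ as an appropriate $\mathcal{A}$-module quotient of $B$. Let $\mathcal{J} := \mathrm{Ann}_{\mathcal{O}_X}(A) \subset \mathcal{O}_X$; since $A$ is coherent on the noetherian scheme $X$ and has support $V$, we have $\mathcal{J} \cdot A = 0$ and $V(\mathcal{J}) = V$. For any $k \geq 1$, the $\mathcal{O}_X$-submodule $\mathcal{J}^k B \subset B$ is in fact an $\mathcal{A}$-submodule, since $\mathcal{J} \subset \mathcal{O}_X$ commutes with the $\mathcal{A}$-action (compare Lemma~\ref{lmaidealannihilate}). The quotient $B' := B/(\mathcal{J}^k B)$ is therefore a right $\mathcal{A}$-module annihilated by $\mathcal{J}^k$, and so $\mathrm{Supp}(B') \subset V(\mathcal{J}^k) = V$. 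What remains is to choose $k$ so that the map $A \to B'$ is still injective, i.e.\ so that $A \cap \mathcal{J}^k B = 0$.

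The key step is an application of the Artin--Rees lemma. Vanishing of $A \cap \mathcal{J}^k B$ can be checked on affine opens, so fix $U = \Spec(R) \subset X$ and write $J, M, N$ for the corresponding ideal of $R$ and $R$-modules obtained by restriction. Since $\mathcal{A}$ is coherent over $\mathcal{O}_X$, $N$ is a finitely generated $R$-module, and $M \subset N$ satisfies $JM = 0$. Classical Artin--Rees then provides a constant $c \geq 0$ with
\[J^n N \cap M = J^{n-c}\bigl(J^c N \cap M\bigr) \subset J \cdot M = 0 \quad \text{for all } n \geq c+1.\]
As $X$ is noetherian, it is covered by finitely many affines, and taking $k$ to be one more than the maximum of the resulting Artin--Rees constants yields a single $k$ with $\mathcal{J}^k B \cap A = 0$ globally.

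With this $k$ fixed, I would set $B' := B/(\mathcal{J}^k B)$ and $C' := B'/A \cong B/(A + \mathcal{J}^k B)$. Both have support contained in $V$; the inclusion $A \to B'$ is injective by the Artin--Rees step; the vertical maps $B \to B'$ and $C \to C'$ are the evident projections. Commutativity of both squares of the diagram and exactness of the lower row are then immediate. The main obstacle is the Artin--Rees input, which is what prevents parts of $A$ from being inadvertently killed when one cuts $B$ down to a quotient supported only on~$V$; the possible noncommutativity of $\mathcal{A}$ is immaterial, because the annihilating ideal lives in the central subsheaf~$\mathcal{O}_X$.
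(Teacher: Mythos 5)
Your proof is correct and follows essentially the same strategy as the paper's: quotient $B$ by a high power of an ideal cutting out $V$, and use Artin--Rees to choose the power large enough that $A\to B'$ stays injective, checking this on a finite affine cover. The one genuine (minor) divergence is in which Artin--Rees you invoke: the paper works with the central ideal $\mathcal{A}\cdot\mathcal{J}$ of the noncommutative ring $\mathcal{A}(U)$ and cites the noncommutative Artin--Rees lemma for central ideals, whereas you observe that $\mathcal{J}^kB=(\mathcal{A}\cdot\mathcal{J})^kB$ and that $A(U)\subset B(U)$ are finitely generated over the commutative noetherian ring $R=\mathcal{O}_X(U)$ (via coherence of $\mathcal{A}$ over $\mathcal{O}_X$), so the classical commutative Artin--Rees lemma already gives the vanishing of $A\cap\mathcal{J}^nB$. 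This is a small but real simplification, since it removes the dependence on the noncommutative version of the lemma; the only noncommutative input you need is centrality of $\mathcal{O}_X$ in $\mathcal{A}$, which guarantees that $\mathcal{J}^kB$ is an $\mathcal{A}$-submodule, exactly as in the paper's Lemma~\ref{lmaidealannihilate}.
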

\begin{proof}
	Let~$\mathcal{J} \subset \mathcal{O}_X$ denote the radical ideal corresponding to the closed subset~$V$. Then there exists~$n_0 \in \mathbb{N}$ such that~$\mathcal{J}^n A= 0$ for all~$n \geq n_0$, and by \cref{lmaidealannihilate} it follows that~$(\mathcal{A} \cdot \mathcal{J}^n) A = (\mathcal{A} \cdot \mathcal{J})^n A =0$ all~$n \geq n_0$. For each~$n$, we obtain a commutative diagram with exact rows
	\begin{equation}
	  \begin{tikzcd}
      0 \arrow{r}& A \arrow{r}{\iota} \arrow{d}{\identity} & B \arrow{r}{\pi} \arrow{d}& C \arrow{d} \arrow{r} & 0 \\
	  	& A \arrow{r}{\overline{\iota}} & B/(\mathcal{A} \cdot \mathcal{J})^n B  \arrow{r}{\overline{\pi}}& C/(\mathcal{A} \cdot \mathcal{J})^n C  \arrow{r} & 0
	  \end{tikzcd}
	  \label{eqexseq}
	\end{equation}
	where~$\overline{\iota},\overline{\pi}$ are induced by~$\iota,\pi$ respectively and the non-labeled vertical maps are the canoncial projections. We claim that for~$n$ large enough,~$\overline{\iota}$ is a monomorphism. As we can check injectivity locally, let~$X= \bigcup_{i=1}^r U_i$ with~$U_{i} = \Spec R_i$ open affine. Then, on each~$U_i$, the problem looks as follows: we are given an~$R_i$-algebra~$S_i$, an ideal~$J_i \subset R_i$, an exact of~$S_i$-modules
	\begin{equation}
    0 \to A_i \to B_i \to C_i \to 0
  \end{equation}
	and we know that for all~$n \geq n_i$,~$J^n A =0$. Diagram (\ref{eqexseq}) translates as
	\begin{equation}
    \begin{tikzcd}
      0 \arrow[r] & A_i \arrow[r, "\iota_i"] \arrow[d, equals] & B_i \arrow[r, "\pi_i"] \arrow[d] & C_i \arrow[d] \arrow[r] & 0 \\
      & A_i \arrow[r, "\overline{\iota_i}"] & B_i/(S_i \cdot J_i)^n B_i \arrow[r, "\overline{\pi_i}"] & C/(S_i \cdot J_i)^n C_i \arrow[r] & 0
    \end{tikzcd}
	\end{equation}
	We will now use the Artin--Rees lemma, which is in general not valid for non-commutative rings, but does hold for central ideals like~$S_i \cdot J_i$ (see~\cite[theorem 7.2.1]{MR0231816}): there exists~$q_i \in \mathbb{N}$ such that for all~$m_i \geq q_i$ we have
	\begin{equation}
    A_i \cap (S_i \cdot J_i)^n B_i = (S_i \cdot J_i)^{n-q_i} (A_i \cap (S_i \cdot J_i)^{q_i} B_i).
  \end{equation}
	Now note that~$\ker(\overline{\iota_i}) = A_i \cap (S_i \cdot J_i)^n B_i$, and thus the Artin-Rees lemma tells us that if we choose~$m_i$ such that~$n-q \geq n_i$, then~$\ker(\overline{\iota_i}) = 0$, i.e.~$\overline{\iota_i}$ is injective. Now, if we choose~$n = \max_i m_i$, then~$\overline{\iota_i}$ will be injective for all~$i$, proving that~$\overline{\iota}$ is a monomorphism.

	To conclude the proof, note that for any coherent~$\mathcal{A}$-module~$M$, we have that~$\Supp(M) = \mathrm{V}(\mathrm{Ann}_{\mathcal{O}_X}(M))$ since~$M$ is also~$\mathcal{O}_X$-coherent. But by \cref{lmaidealannihilate}, we know that~$\mathcal{J}^n$ annihilates~$M/(\mathcal{A} \cdot \mathcal{J})^n M = M/(\mathcal{A} \cdot \mathcal{J}^n) M$ as~$\mathcal{A} \cdot \mathcal{J}^n$ does so.  It follows that
	\begin{equation}
    \Supp(B/(\mathcal{A} \cdot \mathcal{J})^n B), \Supp(C/(\mathcal{A} \cdot \mathcal{J})^n C) \subset \mathrm{V}(\mathcal{J}^n) = \mathrm{V}(\mathcal{J}) = V.
  \end{equation}
\end{proof}

\begin{definition}
	For~$p \in \mathbb{Z}$, denote by~$\coh(\mathcal{A})_{\leq p}$ the full subcategory of~$\coh(\mathcal{A})$ consisting of those~$\mathcal{A}$-modules~$M$ with~$\dim(\Supp(M)) \leq p$.
	\label{defabfiltsubcat}
\end{definition}

\begin{remark}
	The properties of~$\Supp(-)$ easily imply that~$\coh(\mathcal{A})_{\leq p}$ is a Serre subcategory of~$\coh(\mathcal{A})_{\leq q}$ if~$p \leq q$.
\end{remark}

\begin{corollary}
  \label{corollary:abelian-derived-filtration}
	The natural functors (see \cref{defabfiltsubcat} and \cref{convsubcatfilt} for the notation)
	\begin{align*}
	\derived^\bounded(\coh(\mathcal{A})_{\leq p}) &\to \derived^\bounded_{\leq p}(\coh(\mathcal{A})) \\
	\derived^\bounded(\coh(\mathcal{A})_{\leq p})/\derived^\bounded(\coh(\mathcal{A})_{\leq p -1}) &\to \derived^\bounded(\coh(\mathcal{A})_{\leq p}/\coh(\mathcal{A})_{\leq p-1})
	\end{align*}
	are equivalences of categories.
	\label{corverdiervsserre}
\end{corollary}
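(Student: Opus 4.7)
The plan is to derive both equivalences from Proposition~\ref{propexseqext} by verifying the hypothesis of a classical criterion comparing the bounded derived category of a Serre subcategory $\mathcal{B}$ of an abelian category $\mathcal{A}$ with the full subcategory $\derived^{\mathrm{b}}_{\mathcal{B}}(\mathcal{A})$ of complexes whose cohomology objects lie in $\mathcal{B}$. The criterion I have in mind (see e.g.\ Stacks Project Tag~06XK, or Keller's survey on derived categories) states: if every monomorphism $B \hookrightarrow A$ in $\mathcal{A}$ with $B \in \mathcal{B}$ factors as $B \hookrightarrow A \twoheadrightarrow A'$ with $A' \in \mathcal{B}$ such that the composite $B \to A'$ is still a monomorphism, then the natural functor $\derived^{\mathrm{b}}(\mathcal{B}) \to \derived^{\mathrm{b}}_{\mathcal{B}}(\mathcal{A})$ is an equivalence of triangulated categories.

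To obtain the first equivalence I would take $\mathcal{A} = \mathrm{Coh}(\mathcal{A})$ and $\mathcal{B} = \mathrm{Coh}(\mathcal{A})_{\leq p}$ (which is Serre by the remark preceding the corollary). Given any mono $M \hookrightarrow N$ with $M \in \mathcal{B}$, I complete it to a short exact sequence $0 \to M \to N \to N/M \to 0$ and apply Proposition~\ref{propexseqext} with $V = \mathrm{Supp}(M) \subset V_{\leq p}$. The resulting diagram provides a quotient $N \twoheadrightarrow N'$ with $\mathrm{Supp}(N') \subset V_{\leq p}$, so $N' \in \mathcal{B}$, and the commutativity of the diagram forces the composite $M \hookrightarrow N \twoheadrightarrow N'$ to coincide with the bottom row's left monomorphism $M \hookrightarrow N'$. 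This verifies the criterion and yields the first equivalence.

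The second equivalence follows by combining the first with a standard general fact (again due essentially to Keller / Miyachi): for any Serre inclusion $\mathcal{B}' \subset \mathcal{B}$, the natural functor $\derived^{\mathrm{b}}(\mathcal{B})/\derived^{\mathrm{b}}_{\mathcal{B}'}(\mathcal{B}) \to \derived^{\mathrm{b}}(\mathcal{B}/\mathcal{B}')$ is an equivalence. Applying this with $\mathcal{B}' = \mathrm{Coh}(\mathcal{A})_{\leq p-1}$ inside $\mathcal{B} = \mathrm{Coh}(\mathcal{A})_{\leq p}$ identifies the target of the second functor with $\derived^{\mathrm{b}}(\mathcal{B})/\derived^{\mathrm{b}}_{\mathcal{B}'}(\mathcal{B})$, and the first equivalence applied one dimension down identifies $\derived^{\mathrm{b}}_{\mathcal{B}'}(\mathcal{B})$ with $\derived^{\mathrm{b}}(\mathcal{B}')$, giving the claim.

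The main obstacle is bookkeeping rather than any deep input: one must carefully match the output of Proposition~\ref{propexseqext} with the Keller-type lifting hypothesis. A small subtlety worth flagging is that Proposition~\ref{propexseqext} is stated inside $\mathrm{Coh}(\mathcal{A})$, but its proof goes through verbatim inside $\mathrm{Coh}(\mathcal{A})_{\leq p}$ (the construction only shrinks supports), so the criterion applies to the pair $\mathrm{Coh}(\mathcal{A})_{\leq p-1} \subset \mathrm{Coh}(\mathcal{A})_{\leq p}$ needed for the second equivalence.
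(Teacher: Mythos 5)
Your proposal is correct and is essentially the paper's own proof: the paper simply remarks that Proposition~\ref{propexseqext} is precisely condition~(c1) of Keller's lemma in \emph{Derived categories and their uses}, \S1.15, which yields both the subcategory comparison and the quotient comparison at once. Your more explicit unwinding --- in particular noting that the lifting condition must also be checked for the nested pair $\mathrm{Coh}(\mathcal{A})_{\leq p-1}\subset\mathrm{Coh}(\mathcal{A})_{\leq p}$, which Proposition~\ref{propexseqext} supplies since its construction only shrinks supports --- is the same argument (the quotient comparison is not unconditional for arbitrary Serre subcategories, but it is part of the same Keller lemma whose hypothesis you have verified, so nothing is missing).
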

\begin{proof}
  The statement of \cref{propexseqext} is exactly the condition of~\cite[\S1.15, lemma~(c1)]{MR1667558} which makes the above functors equivalences.
\end{proof}

\section{Relative tensor triangular Chow groups of a coherent \texorpdfstring{$\mathcal{O}_X$}{OX}-algebra}
\label{sectionrelgroupsalg}
In this section, we obtain a definition of the relative tensor triangular cycle and Chow groups of a coherent~$\mathcal{O}_X$\dash algebra~$\mathcal{A}$ by means of an action of the derived category of quasi-coherent~$\mathcal{O}_X$-modules~$\derived(\Qcoh(\mathcal{O}_X))$ on the derived category of quasi-coherent right~$\mathcal{A}$-modules~$\derived(\Qcoh(\mathcal{A}))$. We then derive some basic properties of these groups, including a group homomorphism induced by the forgetful functor that relates ~$\CH^{\Delta}_i(X,\mathcal{A})$ to~$\CH_i(X)$ when~$X$ is regular.

The general approach we use for the relative tensor triangular Chow groups works for all quasi-coherent~$\mathcal{O}_X$-algebras~$\mathcal{A}$, but as we will see below, the coherent case will turn out to be more manageable, since then two notions of support will agree for bounded complexes of coherent~$\mathcal{A}$-modules. Therefore we only develop the theory in this setting, which is sufficient for the examples.
\subsection{The action of \texorpdfstring{$\derived(\Qcoh(\mathcal{O}_X))$}{D(Qcoh O\textunderscore X)} on \texorpdfstring{$\derived(\Qcoh(\mathcal{A}))$}{D(Qcoh A)}}
The bifunctor
\begin{equation}
  - \otimes_{\mathcal{O}_X} - : \Qcoh(\mathcal{O}_X) \times \Qcoh(\mathcal{A}) \to \Qcoh(\mathcal{A})
\end{equation}
gives rise to a bifunctor
\begin{equation}
  - \otimes^\LLL_{\mathcal{O}_X} - : \derived(\Qcoh(\mathcal{O}_X)) \times \derived(\Qcoh(\mathcal{A})) \to \derived(\Qcoh(\mathcal{A}))
\end{equation}
by taking~$\mathrm{K}$-flat resolution in the first variable and applying~$- \otimes_{\mathcal{O}_X} -$. This defines an action of~$\derived(\Qcoh(\mathcal{O}_X))$ on~$\derived(\Qcoh(\mathcal{A}))$, where the unitor and associator isomorphisms (\ref{eqnassocunitiso}) are induced by those on the level of complexes, i.e.~the natural isomorphisms
\begin{align*}
	(A^{\bullet} \otimes_{\mathcal{O}_X} B^{\bullet}) \otimes_{\mathcal{O}_X} X^{\bullet} &\xrightarrow{\sim} A^{\bullet} \otimes_{\mathcal{O}_X} (B^{\bullet} \otimes_{\mathcal{O}_X} X^{\bullet})\\
	\mathcal{O}_X \otimes_{\mathcal{O}_X} X^{\bullet} &\xrightarrow{\sim} X^{\bullet}
\end{align*}
for~$A^{\bullet},B^{\bullet}$ complexes of quasi-coherent~$\mathcal{O}_X$-modules and~$X^{\bullet}$ a complex of quasi-coherent right~$\mathcal{A}$-modules.

\begin{remark}
	The action of~$\derived(\Qcoh(\mathcal{O}_X))$ on~$\derived(\Qcoh(\mathcal{A}))$ satisfies the local-to-global principle (see \cref{remlocglob}) since the action~$\derived(\Qcoh(\mathcal{O}_X))$ on itself does so.
\end{remark}

%\begin{lemma}
%	Let~$U \subset X$ be an open subset with complement~$Z$. Then
%	\begin{equation}L_Z\mathcal{O}_X \ast \derived(\Qcoh(\mathcal{A})) \cong \derived(\Qcoh(\mathcal{A}|_U)).\end{equation}
%\end{lemma}
%\begin{proof}
%	The functor~$L_Z$ is the localization functor associated to the smashing subcategory~$\langle C^{\bullet} \in \derived^\perf(X) : \supp(C^{\bullet}) \subset Z \rangle$. Since the telescope conjecture holds for~$\derived(\Qcoh(\mathcal{O}_X))$ (see Balmer, Favi ``Generalized  tensor  idempotents and  the  telescope  conjecture'', Corollary 6.8), the latter subcategory is equal to~$\derived_Z(\Qcoh(\mathcal{O}_X))$ (see \cref{dfnsubsetsubcat}). Hence, by \cref{corderlocequiv}, it follows that~$L_Z \cong (i_U)_* \circ (i_U)^*$, where~$i_U: U \hookrightarrow X$ denotes the inclusion of the open subscheme~$U$. Notice that neither the direct nor the inverse image functor along~$i_U$ needs to be derived, since~$i_U$ is always flat and in this case also affine since we assumed~$X$ separated. Therefore,~$L_Z\mathcal{O}_X \cong \mathcal{O}_U$ and the functor~$L_Z\mathcal{O}_X \ast$ is given as~$\mathcal{O}_U \otimes_{\mathcal{O}_X} -$. But the latter functor is easily identified with the composition of direct and inverse image functor along~$i_U$ on the level of~$\mathcal{A}$-modules, which has essential image~$\derived(\Qcoh(\mathcal{A}|_U))$.
%\end{proof}

We will now continue to derive some properties of the notion of support that the action of~$\derived(\Qcoh(\mathcal{O}_X))$ on~$\derived(\Qcoh(\mathcal{A}))$ induces on objects of the latter category.
\begin{proposition}
	Let~$V \subset X$ be a specialization-closed subset. Then~$\derived_V(\Qcoh(\mathcal{A}))$ coincides with the subcategory~$\derived(\Qcoh(\mathcal{A}))_V$ of all complexes~$C^{\bullet} \in \derived(\Qcoh(\mathcal{A}))$ such that~$\supp(C^{\bullet}) \subset V$. In particular, the subcategories~$\derived_V(\Qcoh(\mathcal{A}))$ are smashing.
	\label{propsubsetsubcatcoincide}
\end{proposition}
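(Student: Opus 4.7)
The plan is to reduce the statement to its analogue for the action of $\mathrm{D}(\mathrm{Qcoh}(\mathcal{O}_X))$ on itself, by transporting both conditions along the conservative forgetful functor $U \colon \mathrm{D}(\mathrm{Qcoh}(\mathcal{A})) \to \mathrm{D}(\mathrm{Qcoh}(\mathcal{O}_X))$. The first observation I would record is that $U$ is compatible with the action: since the derived tensor product $F \otimes^{\mathrm{L}}_{\mathcal{O}_X} -$ is computed by taking a $\mathrm{K}$-flat resolution of $F$ in $\mathrm{D}(\mathrm{Qcoh}(\mathcal{O}_X))$ and then applying the ordinary tensor product, there is a natural isomorphism $U(F \otimes^{\mathrm{L}}_{\mathcal{O}_X} C^{\bullet}) \cong F \otimes^{\mathrm{L}}_{\mathcal{O}_X} U(C^{\bullet})$. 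Moreover, $U$ is conservative because a complex in $\mathrm{D}(\mathrm{Qcoh}(\mathcal{A}))$ vanishes iff its cohomology $\mathcal{A}$-modules vanish, which can be tested after forgetting to $\mathcal{O}_X$-modules.

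Next, I would translate both subcategories through $U$. The inclusion in $\mathrm{D}_V(\mathrm{Qcoh}(\mathcal{A}))$ is equivalent to $U(C^{\bullet}) \in \mathrm{D}_V(\mathrm{Qcoh}(\mathcal{O}_X))$ by the very definition of $\mathrm{Supp}$ for $\mathcal{A}$-modules. For the other side, by Lemma~\ref{lmarelativesmash} the essential image $\Gamma_V(\mathcal{O}_X) \ast \mathrm{D}(\mathrm{Qcoh}(\mathcal{A})) = \mathrm{D}(\mathrm{Qcoh}(\mathcal{A}))_V$ coincides with the kernel of $L_V(\mathcal{O}_X) \ast -$, since tensoring the triangle $\Gamma_V(\mathcal{O}_X) \to \mathcal{O}_X \to L_V(\mathcal{O}_X)$ with $C^{\bullet}$ gives the localization triangle. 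Thus $C^{\bullet} \in \mathrm{D}(\mathrm{Qcoh}(\mathcal{A}))_V$ iff $L_V(\mathcal{O}_X) \otimes^{\mathrm{L}}_{\mathcal{O}_X} C^{\bullet} = 0$, which by the compatibility and conservativity of $U$ is equivalent to $L_V(\mathcal{O}_X) \otimes^{\mathrm{L}}_{\mathcal{O}_X} U(C^{\bullet}) = 0$, i.e.\ to $U(C^{\bullet}) \in \mathrm{D}(\mathrm{Qcoh}(\mathcal{O}_X))_V$.

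At this point the proposition is reduced to the classical case $\mathcal{A} = \mathcal{O}_X$: for $V \subset X$ specialization-closed on a noetherian separated scheme, a complex in $\mathrm{D}(\mathrm{Qcoh}(\mathcal{O}_X))$ has cohomology sheaves set-theoretically supported in $V$ iff it lies in the smashing ideal generated by $\mathrm{D}^{\mathrm{perf}}_V(X)$. I would invoke this directly from \cite{MR3177268} and the general machinery of~\cite{MR2806103}; alternatively one can check it Zariski-locally using the stable Koszul description of $\Gamma_V$ recalled after Definition~\ref{dfnbigsupport}, since both conditions then translate, at a point $\mathfrak{p} \notin V$, to $\mathfrak{p}$-local acyclicity of the complex. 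Once this is in place, the final smashing assertion is immediate: $\mathrm{D}(\mathrm{Qcoh}(\mathcal{A}))_V = \Gamma_V(\mathcal{O}_X) \ast \mathrm{D}(\mathrm{Qcoh}(\mathcal{A}))$ is smashing by Lemma~\ref{lmarelativesmash}, and this is now identified with $\mathrm{D}_V(\mathrm{Qcoh}(\mathcal{A}))$.

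The main obstacle I anticipate is the classical input for $\mathcal{O}_X$ itself: for an individual unbounded complex the small and big supports can genuinely differ, so the argument really uses the \emph{subcategorial} nature of the statement together with the noetherian hypothesis (which makes the local-to-global principle of Remark~\ref{remlocglob} available). Everything else is formal once the compatibility of $U$ with the action and its conservativity are pinned down.
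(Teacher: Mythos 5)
Your proof is correct and follows essentially the same route as the paper's: both arguments funnel the comparison through the forgetful functor $U$ (which computes both notions of support) and then reduce to the known commutative fact that, over a noetherian base and for specialization-closed $V$, the small and the big support of a complex are contained in $V$ simultaneously. The only imprecision is the parenthetical claim that at an individual point $\mathfrak{p}\notin V$ both conditions translate to $\mathfrak{p}$-local acyclicity --- they do not, since $\Gamma_{\mathfrak{p}}C^{\bullet}=0$ is strictly weaker than $C^{\bullet}_{\mathfrak{p}}=0$ in general --- but you flag this yourself, and the subcategory-level input you actually invoke (the two supports have the same minimal elements, which is exactly the reference the paper uses) is the correct one.
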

\begin{proof}
	If~$C^{\bullet}$ is a complex of quasi-coherent right~$\mathcal{A}$-modules, then we need to show that~$\supp(C^{\bullet}) \subset V \Leftrightarrow \Supp(C^{\bullet}) \subset V$. If~$X = \bigcup_i U_i$ is an open cover, then it suffices to show that~$\supp(C^{\bullet}) \cap U_i \subset V \cap U_i \Leftrightarrow \Supp(C^{\bullet}) \cap U_i \subset V \cap U_i$ for all~$i$. Let~$U_i = \Spec R_i, i= 1, \ldots, n$ be a cover of~$X$ by affine opens with closed complements~$Z_i$ and set~$V_i \coloneqq U_i \cap V$. Notice that the sets~$V_i$ are still specialization-closed in~$U_i$. We have~$\supp(C^{\bullet}|_{U_i}) = \supp(L_{Z_i}\mathcal{O}_X \ast C^{\bullet}) = \supp(C^{\bullet}) \cap U_i$ by \cref{propstevesuppprop} and~$\Supp(C^{\bullet}|_{U_i}) = \Supp(L_{Z_i}\mathcal{O}_X \ast C^{\bullet}) = \Supp(C^{\bullet}) \cap U_i$ since localization is exact. Hence we have reduced to showing that
	\begin{equation}
    \supp(C^{\bullet}|_{U_i}) \subset V_i \Leftrightarrow \Supp(C^{\bullet}|_{U_i}) \subset V_i$ for~$i =1, \ldots n.
  \end{equation}
  But now, we can assume that~$\mathcal{A}$ is given as an~$R_i$-algebra~$A$ and~$C^{\bullet}|_{U_i}$ a complex of right~$A$-modules. Since both~$\supp$ and~$\Supp$ can be computed by first applying the forgetful functor~$\derived(\Qcoh(\mathcal{A})) \to \derived(\Qcoh(\mathcal{O}_X))$, the result follows from~\cite[proposition~3.14]{MR3594539}, where it is shown that for the complex of~$R_i$-modules~$C^{\bullet}|_{U_i}$, the sets~$\supp(C^{\bullet}|_{U_i})$ and~$\Supp(C^{\bullet}|_{U_i})~$ have the same minimal elements.

	The last statement follows from the first and \cref{lmarelativesmash}.
\end{proof}

Let us show that~$\supp$ and~$\Supp$ coincide for small complexes.
\begin{proposition}
	Let~$C^{\bullet} \in \derived(\Qcoh(\mathcal{A}))$ such that~$\mathrm{H}^{\ast}(C^{\bullet})$ is bounded and coherent. Then~$\supp(C^{\bullet}) = \Supp(C^{\bullet})$.
	\label{propsupp=Supp}
\end{proposition}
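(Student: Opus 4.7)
The inclusion $\mathrm{supp}(C^{\bullet}) \subset \mathrm{Supp}(C^{\bullet})$ is immediate from Proposition \ref{propsubsetsubcatcoincide}. Under the bounded coherent hypothesis, $\mathrm{Supp}(C^{\bullet}) = \bigcup_i \mathrm{Supp}(\mathrm{H}^i(C^{\bullet}))$ is a finite union of closed sets, hence closed and in particular specialization-closed. Applying Proposition \ref{propsubsetsubcatcoincide} with $V = \mathrm{Supp}(C^{\bullet})$ therefore yields $\mathrm{supp}(C^{\bullet}) \subset \mathrm{Supp}(C^{\bullet})$.

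For the reverse inclusion, my plan is to reduce to the classical commutative situation via the forgetful functor $U\colon \mathrm{D}(\mathrm{Qcoh}(\mathcal{A})) \to \mathrm{D}(\mathrm{Qcoh}(\mathcal{O}_X))$. Unwinding the definition of the action, the underlying $\mathcal{O}_X$-complex of $\Gamma_x(\mathcal{O}_X) \ast C^{\bullet}$ is $\Gamma_x(\mathcal{O}_X) \otimes^{\mathrm{L}}_{\mathcal{O}_X} U(C^{\bullet})$. Since $U$ is exact and conservative (vanishing of an object of $\mathrm{D}(\mathrm{Qcoh}(\mathcal{A}))$ is tested on its underlying $\mathcal{O}_X$-complex), it follows that $\mathrm{supp}(C^{\bullet}) = \mathrm{supp}(U(C^{\bullet}))$, where the right-hand side denotes the tt-support for the self-action of $\mathrm{D}(\mathrm{Qcoh}(\mathcal{O}_X))$. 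Similarly $\mathrm{Supp}(C^{\bullet}) = \mathrm{Supp}(U(C^{\bullet}))$ by definition, and $U(C^\bullet)$ still has bounded coherent cohomology. Hence it suffices to prove the proposition for $\mathcal{A}=\mathcal{O}_X$.

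Finally, both notions of support localize: covering $X$ by affine opens $U_i = \mathrm{Spec}(R_i)$ and arguing exactly as in the first half of the proof of Proposition \ref{propsubsetsubcatcoincide}, we reduce to showing, for a bounded complex $D^{\bullet} \in \mathrm{D}^{\mathrm{b}}(\mathrm{Mod}(R))$ of finitely generated cohomology over a commutative Noetherian ring $R$, that the tt-support coincides with the classical support. By the example quoted in Section~\ref{sectionttprelims}, the tt-condition $\Gamma_{\mathfrak{p}}(R) \otimes^{\mathrm{L}}_R D^{\bullet} \neq 0$ is equivalent to $\kappa(\mathfrak{p}) \otimes^{\mathrm{L}}_{R_{\mathfrak{p}}} D^{\bullet}_{\mathfrak{p}} \neq 0$, and this is in turn equivalent to $D^{\bullet}_{\mathfrak{p}} \neq 0$ by applying Nakayama's lemma to the top-degree non-vanishing cohomology of $D^{\bullet}_{\mathfrak{p}}$ and inducting on amplitude. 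The main obstacle is the reduction step in the second paragraph: one must justify that passing through $U$ loses no information for either type of support, after which everything boils down to the classical commutative fact.
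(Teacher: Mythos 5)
Your argument is correct and follows essentially the same route as the paper's proof: reduce to the commutative case via the forgetful functor $U$ (which is conservative and intertwines the action with $\otimes^{\mathrm{L}}_{\mathcal{O}_X}$), localize to an affine cover exactly as in Proposition \ref{propsubsetsubcatcoincide}, and invoke the coincidence of the small support with the classical support for bounded complexes with finitely generated cohomology. The paper merely performs the localization before the reduction and cites the commutative statement rather than reproving it via the Nakayama argument you supply, which are cosmetic differences.
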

\begin{proof}
	As in the proof of \cref{propsubsetsubcatcoincide}, we notice that if~$X = \bigcup_i U_i$ is a cover by affine opens with complements~$Z_i$, then it suffices to show that
	\begin{equation}
    \underbrace{\supp(C^{\bullet}) \cap U_i}_{= \supp(C^{\bullet}|_{U_i})} = \underbrace{\Supp(C^{\bullet}) \cap U_i}_{=\Supp(C^{\bullet}|_{U_i})}
  \end{equation}
  for all~$i$. Hence, we have reduced to the affine case, where the result is implied from the corresponding one for complexes in~$\derived(\Qcoh(\mathcal{O}_X))$. But the latter is well known.
  %, see e.g.~\cite{MR2489634}.
\end{proof}

\begin{remark}
  Given a coherent~$\mathcal{O}_X$\dash algebra~$\mathcal{A}$ (on which~$\mathcal{O}_X$ acts by centrally by assumption) we can consider~$\ZZ(\mathcal{A})$ as a commutative coherent~$\mathcal{O}_X$\dash algebra. Let
  \begin{equation}
    \pi\colon Z\coloneqq\relSpec_X\ZZ(\mathcal{A})\to X
  \end{equation}
  be the relatively affine scheme given by~$\ZZ(\mathcal{A})$. We can consider~$\mathcal{A}$ as a coherent~$\mathcal{O}_Z$\dash algebra, which we will denote~$\mathcal{B}$, and by~\cite[proposition~3.5]{MR3695056} we have that~$\Qcoh_X\mathcal{A}\cong\Qcoh\mathcal{B}$. The action of~$\derived(\Qcoh X)$ and~$\derived(\Qcoh Z)$ will be different in general.

  % an interesting example of when this happens is for AS-regular (quadratic) algebras: they are associated to (E,sigma,L), E a cubic divisor in P^2, sigma an automorphism, and if this automorphism is finite then A defines a maximal order on Proj Z(A), whose center can be bigger, and whose relative Spec is _always_ isomorphic to P^2
\end{remark}

\subsection{Unwinding the definitions}
\label{subsection:main-result}
With all the technical material we have assembled so far, let us look once more at \cref{definition:1510.00211}. Let~$\mathcal{T} = \derived(\Qcoh(\mathcal{O}_X))$ and~$\mathcal{K} = \derived(\Qcoh(\mathcal{A}))$. Recall that~$X$ is a noetherian separated scheme and~$\mathcal{A}$ is a coherent sheaf of~$\mathcal{O}_X$\dash algebras.
\begin{convention}
	We will write
	\begin{equation}
    \ZZ^{\Delta}_i(X,\mathcal{A}) \quad \text{and} \quad \CH^{\Delta}_i(X,\mathcal{A})
  \end{equation}
	for the groups~$\ZZ^{\Delta}_i(\mathcal{T},\mathcal{K})$ and~$\CH^{\Delta}_i(\mathcal{T},\mathcal{K})$, respectively.
\end{convention}
We have
\begin{equation}
  \ZZ^{\Delta}_{i} (X,\mathcal{A}) = \Kzero\left((\mathcal{K}_{(i)}/\mathcal{K}_{(i-1)})^\compact\right)
\end{equation}
by definition, and both categories~$\mathcal{K}_{(i)},\mathcal{K}_{(i+1)}$ are compactly generated. Hence, we have that
\begin{equation}
  (\mathcal{K}_{(i)}/\mathcal{K}_{(i-1)})^\compact \cong \left((\mathcal{K}_{(i)})^\compact/(\mathcal{K}_{(i-1)})^\compact\right)^{\natural}
\end{equation}
by~\cite[theorem~5.6.1]{MR2681709}. Furthermore,~$(\mathcal{K}_{(i)})^\compact$ coincides with the full subcategory of~$\mathcal{K}^\compact$ consisting of objects with support in codimension~$\geq i$ by~\cite[proposition~2.23]{1510.00211}. From \cref{thmcompgen}, we have
\begin{equation}
  \mathcal{K}^\compact \cong \derived^\perf(\mathcal{A}) \subset \derived^\bounded(\coh(\mathcal{A})).
\end{equation}
Because~$\mathcal{A}$ is assumed to be coherent,~$\Supp$ and~$\supp$ coincide for objects of~$\derived^\bounded(\coh(\mathcal{A}))$ by \cref{propsupp=Supp}. It follows that
\begin{equation}
  \ZZ^{\Delta}_{i} (X,\mathcal{A}) = \Kzero\left(\left(\derived^{\mathrm{perf}}_{\leq i}(\mathcal{A})/ \derived^{\mathrm{perf}}_{\leq i-1}(\mathcal{A})  \right)^{\natural} \right).
\end{equation}
If~$\mathcal{A}$ is additionally of finite global dimension,~$\derived^{\mathrm{perf}}(\mathcal{A}) \cong \derived^\bounded(\coh(\mathcal{A}))$ and we get from \cref{corverdiervsserre} that
\begin{equation}
  \ZZ^{\Delta}_{i} (X,\mathcal{A})= \Kzero\left(\derived^\bounded\left(\coh_{\leq i}(\mathcal{A})/\coh_{\leq i-1}(\mathcal{A})\right)\right) =  \Kzero\left(\coh_{\leq i}(\mathcal{A})/\coh_{\leq i-1}(\mathcal{A})\right).
\end{equation}
Similarly, we deduce in this case an isomorphism of sequences of abelian groups
\begin{equation}
  \begin{gathered}
    \begin{tikzcd}
      \ZZ^{\Delta}_{i} (X,\mathcal{A}) \arrow{r} \arrow[equals]{d}
      & \Kzero\left((\mathcal{K}_{(i+1)}/\mathcal{K}_{(i-1)})^\compact\right) \arrow{r} \arrow[equals]{d}
      & \ZZ^{\Delta}_{i+1} (X,\mathcal{A}) \arrow[equals]{d}
      \\
      \Kzero\left(\coh_{\leq i}(\mathcal{A})/\coh_{\leq i-1}(\mathcal{A})\right) \arrow{r}{\iota}
      & \Kzero\left(\coh_{\leq i+1}(\mathcal{A})/\coh_{\leq i-1}(\mathcal{A})\right) \arrow{r}{\pi}
      & \Kzero\left(\coh_{\leq i+1}(\mathcal{A})/\coh_{\leq i}(\mathcal{A})\right)
    \end{tikzcd}
  \end{gathered}
\end{equation}
which are exact in the middle. Hence, we deduce from \cref{propchowexseq} an isomorphism~$\CH^{\Delta}_{i}(X,\mathcal{A}) \cong \im(\iota) = \ker(\pi)$  for this situation. The lower sequence is the end of the~$\mathrm{K}$-theory long exact localization sequence for the Serre localization
\begin{equation}
  \coh_{\leq i}(\mathcal{A})/\coh_{\leq i-1}(\mathcal{A}) \to \coh_{\leq i+1}(\mathcal{A})/\coh_{\leq i-1}(\mathcal{A}) \to \coh_{\leq i+1}(\mathcal{A})/\coh_{\leq i}(\mathcal{A})
\end{equation}
and hence
\begin{equation}
  \label{equation:chow-as-cokernel}
\CH^{\Delta}_{i} (X,\mathcal{A}) \cong \mathrm{coker}\left(\mathrm{K}_1\left( \coh_{\leq i+1}(\mathcal{A})/\coh_{\leq i}(\mathcal{A}) \right)  \to \Kzero\left( \coh_{\leq i}(\mathcal{A})/\coh_{\leq i-1}(\mathcal{A}) \right)\right).
\end{equation}

There is also a local description of~$\ZZ^{\Delta}_{i} (X,\mathcal{A})$. Abstractly, it follows from~\cite{MR3801492} and~\cite[proposition~2.18, lemma~2.19]{1510.00211}, that
\begin{equation}
\ZZ^{\Delta}_{i} (X,\mathcal{A}) = \coprod_{x \in X_{(i)}} \Kzero\left((\Gamma_x \mathcal{K})^\compact\right),
\label{eqnabstractsplitting}
\end{equation}
where~$X_{i}$ is the set of points~$x \in X$ such that~$\dim(x) = i$.

\begin{lemma}
	Suppose~$\mathcal{A}$ is coherent. Then
	\begin{equation}
    (\Gamma_x \mathcal{K})^\compact \cong \derived_{\lbrace x \rbrace}^\perf(\mathcal{A}_x).
  \end{equation}
	\label{lmasplitcomp}
\end{lemma}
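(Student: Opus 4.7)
The plan is to construct an equivalence $\Gamma_x\mathcal{K}\simeq\mathrm{D}_{\{\mathfrak{m}_x\}}(\mathrm{Mod}(\mathcal{A}_x))$, where $\mathfrak{m}_x$ denotes the closed point of $\mathrm{Spec}(\mathcal{O}_{X,x})$, and then read off the compact objects via the affine version of Proposition~\ref{propsubsetsubcatcoincide} together with \cite[Proposition~2.23]{1510.00211}. Unwinding $\Gamma_x = L_{Y_x}\circ\Gamma_{\overline{\{x\}}}$ gives $\Gamma_x\mathcal{K}=\mathcal{K}_{\overline{\{x\}}}\cap\mathcal{K}_{Y_x}^{\perp}$: if $a\in\mathcal{K}_{\overline{\{x\}}}$ then $L_{Y_x}(a)$ lies in $\mathcal{K}_{Y_x}^{\perp}$ by construction and, by Proposition~\ref{propstevesuppprop}, has support contained in $\overline{\{x\}}\cap(\Spc(\mathcal{T}^c)\setminus Y_x)=\{x\}\subseteq\overline{\{x\}}$, hence stays in $\mathcal{K}_{\overline{\{x\}}}$; conversely any $a\in\mathcal{K}_{\overline{\{x\}}}\cap\mathcal{K}_{Y_x}^{\perp}$ is fixed by both $\Gamma_{\overline{\{x\}}}$ and $L_{Y_x}$. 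Via the equivalence $\mathcal{K}_{Y_x}^{\perp}\simeq\mathrm{D}(\mathrm{Mod}(\mathcal{A}_x))$ given by $(i_x)^*$ in Proposition~\ref{propstalkquot}, and the support identity $i_x^{-1}(\overline{\{x\}})=\{\mathfrak{m}_x\}$, the subcategory $\Gamma_x\mathcal{K}$ embeds fully faithfully into $\mathrm{D}_{\{\mathfrak{m}_x\}}(\mathrm{Mod}(\mathcal{A}_x))$.

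The central technical step is essential surjectivity, which I would deduce from the claim that $(i_x)_*$ sends $\mathrm{D}_{\{\mathfrak{m}_x\}}(\mathrm{Mod}(\mathcal{A}_x))$ into $\mathcal{K}_{\overline{\{x\}}}$. Since $(i_x)_*$ is exact, this reduces to a single $\mathcal{A}_x$-module $N$ with $\mathrm{Supp}(N)=\{\mathfrak{m}_x\}$: the stalk $((i_x)_*N)_y$ is a filtered colimit of sections $N(i_x^{-1}(U))$ over opens $U\ni y$, and if $y\in\overline{\{x\}}$ every such $U$ must contain $x$ because opens are closed under generization, forcing $i_x^{-1}(U)=\mathrm{Spec}(\mathcal{O}_{X,x})$ and the section to equal $N$; if $y\notin\overline{\{x\}}$ one may shrink $U$ inside $X\setminus\overline{\{x\}}$, whereupon $i_x^{-1}(U)$ misses $\mathfrak{m}_x$ and the section vanishes since $N$ is supported at $\mathfrak{m}_x$. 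Combined with $(i_x)^*(i_x)_*=\mathrm{id}$, this upgrades the fully faithful functor of the previous paragraph to the desired equivalence $\Gamma_x\mathcal{K}\simeq\mathrm{D}_{\{\mathfrak{m}_x\}}(\mathrm{Mod}(\mathcal{A}_x))$.

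Finally, Proposition~\ref{propsubsetsubcatcoincide} applied to $\mathrm{Spec}(\mathcal{O}_{X,x})$ identifies $\mathrm{D}_{\{\mathfrak{m}_x\}}(\mathrm{Mod}(\mathcal{A}_x))$ with the smashing subcategory $\mathrm{D}(\mathrm{Mod}(\mathcal{A}_x))_{\{\mathfrak{m}_x\}}$ attached to the specialization-closed singleton $\{\mathfrak{m}_x\}$, and \cite[Proposition~2.23]{1510.00211} identifies its compact objects with perfect complexes supported at $\{\mathfrak{m}_x\}$, which is precisely $\mathrm{D}^{\mathrm{perf}}_{\{x\}}(\mathcal{A}_x)$ in the paper's notation. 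Since the equivalence of the previous paragraph preserves compacts, this finishes the argument. The main obstacle is the stalk computation in paragraph two; every other step is either an adjointness calculation or a direct application of results already established in the excerpt.
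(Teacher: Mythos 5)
Your argument is correct, but the second half takes a genuinely different route from the paper. Both proofs begin the same way: decompose $\Gamma_x$ as $\Gamma_{\overline{\{x\}}}\circ L_{Y_x}$ and use Lemma~\ref{lmarelativesmash} together with Proposition~\ref{propstalkquot} to identify $L_{Y_x}\mathcal{O}_X\ast\mathcal{K}$ with $\mathrm{D}(\mathrm{Mod}(\mathcal{A}_x))$. From there the paper never identifies the big category $\Gamma_x\mathcal{K}$: it characterizes its compact objects directly, noting that the inclusion $\Gamma_x\mathcal{K}\hookrightarrow L_{Y_x}\mathcal{O}_X\ast\mathcal{K}$ preserves compactness (its right adjoint $\Gamma_{\overline{\{x\}}}(\mathbb{I})\ast-$ preserves coproducts) and that any compact object of $\mathrm{D}(\mathrm{Mod}(\mathcal{A}_x))$ supported in $\overline{\{x\}}$ is fixed by $\Gamma_{\overline{\{x\}}}$ via the localization triangle and Proposition~\ref{propstevesuppprop}. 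You instead prove the stronger statement that $\Gamma_x\mathcal{K}\simeq\mathrm{D}_{\{\mathfrak{m}_x\}}(\mathrm{Mod}(\mathcal{A}_x))$ as big categories, with the essential surjectivity supplied by an explicit sheaf-theoretic computation of the stalks of $(i_x)_*N$, and then read off the compacts from \cite[Proposition~2.23]{1510.00211}. Your route buys a cleaner intermediate statement at the cost of the hands-on stalk computation and the external citation; the paper's is shorter and purely formal. Two small points you should make explicit: (i) for $(i_x)_*M$ to land in $\Gamma_x\mathcal{K}=\mathcal{K}_{\overline{\{x\}}}\cap\mathcal{K}_{Y_x}^{\perp}$ you also need $(i_x)_*M\in\mathcal{K}_{Y_x}^{\perp}$, which follows in one line from adjunction since $(i_x)^*$ kills $\mathcal{K}_{Y_x}=\mathrm{D}_{Y_x}(\mathcal{A})$; (ii) the final translation from ``compact objects with $\supp\subset\{\mathfrak{m}_x\}$'' to the notation $\mathrm{D}^{\mathrm{perf}}_{\{x\}}(\mathcal{A}_x)$, which is defined via $\mathrm{Supp}$, tacitly uses Proposition~\ref{propsupp=Supp} (or the specialization-closed case of Proposition~\ref{propsubsetsubcatcoincide}), exactly as the paper does in its last sentence.
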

\begin{proof}
	Since for any object~$A \in \mathcal{K}$ we have, by definition,~$\Gamma_x A = \Gamma_{\overline{\lbrace x \rbrace}} L_{Y_x} \mathcal{O}_X \otimes_{\mathcal{O}_X}^\LLL A$, it follows that
	\begin{equation}
    \Gamma_x \mathcal{K} = \Gamma_{\overline{\lbrace x \rbrace}} \mathcal{O}_X \ast \left(L_{Y_x} \mathcal{O}_X \ast \mathcal{K}\right).
  \end{equation}
	The subcategory~$\derived_{Y_x}(\mathcal{A})$ is smashing by \cref{propsubsetsubcatcoincide} and it follows from \cref{lmarelativesmash} and \cref{propstalkquot} that~$L_{Y_x} \mathcal{O}_X \ast \mathcal{K} \cong \derived(\Catmod(\mathcal{A}_x))$. The compact objects of~$\Gamma_x \mathcal{K}$ are given by the compact objects~$a$ of~$L_{Y_x} \mathcal{O}_X \ast \mathcal{K}$ with~$\supp(a) \subset \overline{ \lbrace x \rbrace}$: the inclusion functor~$I: \Gamma_x \mathcal{K} \to L_{Y_x} \mathcal{O}_X \ast \mathcal{K}$ has a coproduct-preserving right adjoint~$\Gamma_{\overline{\lbrace x\rbrace}}(\mathbb{I}) \ast -$ and hence preserves compactness. Thus, the compact objects of~$\Gamma_x \mathcal{K}$ embed into the compact objects of~$L_{Y_x} \mathcal{O}_X \ast \mathcal{K}$ with support in~$\overline{\lbrace x\rbrace}$. On the other hand, if~$a$ is a compact object of~$L_{Y_x} \mathcal{O}_X \ast \mathcal{K}$ with support in~$\overline{\lbrace x\rbrace}$, then the localization triangle
	\begin{equation}
    \Gamma_{\overline{\lbrace x\rbrace}}(\mathbb{I}) \ast a \to a \to L{\overline{\lbrace x\rbrace}}(\mathbb{I}) \ast a \to \Sigma\left(\Gamma_{\overline{\lbrace x\rbrace}}(\mathbb{I}) \ast a\right)
  \end{equation}
	 and \cref{propstevesuppprop} show that~$\Gamma_{\overline{\lbrace x\rbrace}}(\mathbb{I}) \ast a \cong a$, and hence~$a$ belongs to the essential image of the embedding~$I$.

	 Since~$\derived(\Catmod(\mathcal{A}_x))^\compact \cong \derived^\perf(\mathcal{A}_x)$ and~$\supp = \Supp$ for its objects by \cref{propsupp=Supp}, the desired description follows.
\end{proof}

\begin{lemma}
	Let~$(R,\mathfrak{m})$ be a commutative noetherian local ring and~$A$ a (module\dash)finite~$R$-algebra. Then a right~$A$-module~$M$ has finite length over~$A$ if and only if it has finite length over~$R$.
	\label{lmaflsame}
\end{lemma}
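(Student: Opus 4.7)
The plan is to handle the two implications separately, with only the second requiring real content.

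For the direction ``finite $R$\dash length implies finite $A$\dash length'': since the $R$\dash module structure on $M$ factors through the ring map $R \to A$, every $A$\dash submodule of $M$ is automatically an $R$\dash submodule. Hence any strictly increasing chain of $A$\dash submodules is a strictly increasing chain of $R$\dash submodules, so the $A$\dash length is bounded above by the $R$\dash length. This takes one line.

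For the converse, I would use that $\mathfrak{m}A$ sits inside the Jacobson radical $\rad(A)$. Indeed, $A/\mathfrak{m}A$ is a finite-dimensional algebra over the field $k \coloneqq R/\mathfrak{m}$ (using that $A$ is module-finite over $R$), hence is Artinian, so $\mathfrak{m}A$ is contained in $\rad(A)$ by Nakayama applied in the classical way (any element $x \in \mathfrak{m}A$ satisfies $1 + ax$ is a unit for all $a \in A$, because $\mathfrak{m}$ lies in the Jacobson radical of $R$ and $A$ is finite over $R$). Now suppose $M$ has finite $A$\dash length, with composition series
\[0 = M_0 \subset M_1 \subset \cdots \subset M_n = M~,\]
each quotient $S_i \coloneqq M_i/M_{i-1}$ being a simple right $A$\dash module. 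Any simple $A$\dash module is annihilated by $\rad(A) \supset \mathfrak{m}A$, so each $S_i$ is a simple module over the finite-dimensional $k$\dash algebra $A/\mathfrak{m}A$. Picking $0 \neq s \in S_i$, the map $A/\mathfrak{m}A \to S_i$, $\bar a \mapsto s\bar a$, is surjective, so $\dim_k S_i \leq \dim_k A/\mathfrak{m}A < \infty$. Since $R$ acts on $S_i$ through $k$, the $R$\dash length of $S_i$ equals $\dim_k S_i$ and is finite. Adding these up along the filtration shows $M$ has finite $R$\dash length.

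The only minor obstacle is the assertion $\mathfrak{m}A \subseteq \rad(A)$; this is standard for module-finite extensions over a local base, and can be cited from any text on noncommutative algebra, or proved directly via the unit criterion as above. Everything else is the standard composition-series calculation.
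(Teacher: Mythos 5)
Your proof is correct and follows essentially the same route as the paper: the easy direction via restriction of chains, and the converse by reducing to simple right $A$\dash modules, using $\mathfrak{m}A \subseteq \rad(A)$ (which the paper cites from Lam) to see that every simple is a module over the $R$\dash finite-length algebra $A/\mathfrak{m}A$. The only cosmetic difference is that you bound each composition factor directly as a quotient of $A/\mathfrak{m}A$, while the paper phrases this as $A/\rad(A)$ having finite $R$\dash length; the content is identical.
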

\begin{proof}
	Recall that a right module has finite length if and only if it is both artinian and noetherian. Hence, if~$M$ has finite length over~$R$, it must also have finite length over~$A$, since every chain of~$A$-submodules of~$M$ is also a chain of~$R$-submodules.

  In order to prove that right~$A$-modules of finite~$A$-length also have finite~$R$-length, it suffices to show that all simple right~$A$-modules have finite~$R$-length: one can then refine finite composition series over~$A$ to finite composition series over~$R$. In order to study simple right~$A$-modules it suffices to consider simple modules over~$A/\mathrm{J}(A)$, since the Jacobson radical annihilates all simple modules, by definition. We have~$\mathrm{J}(R) = \mathfrak{m}$ and by~\cite[corollary~5.9]{MR1125071}, it follows that~$\mathfrak{m}A \subset \mathrm{J}(A)$, and hence we have a surjection~$A/\mathfrak{m}A \twoheadrightarrow A/\mathrm{J}(A)$. By assumption,~$A/\mathfrak{m}A$ is a finite~$R$-module with support contained in~$\lbrace \mathfrak{m} \rbrace$ and hence has finite length over~$R$. It follows that~$A/\mathrm{J}(A)$ has finite~$R$-length as well. Hence, the finite length right modules over~$A/\mathrm{J}(A)$ have finite length over~$R$, which holds in particular for the simple ones.
\end{proof}

\begin{corollary}
	Suppose~$\mathcal{A}$ is coherent. Then
  \begin{equation}
    \ZZ^{\Delta}_{i} (X,\mathcal{A}) = \coprod_{x \in X_{(i)}} \Kzero\left(\derived_{\fl}^\perf(\mathcal{A}_x)\right).
  \end{equation}
  where~$\derived_{\fl}^\perf(\mathcal{A}_x) \subset \derived^\perf(\mathcal{A}_x)$ denotes the full subcategory of complexes with finite length cohomology. If furthermore~$\mathcal{A}$ has finite global dimension, then
	\begin{equation}
    \ZZ^{\Delta}_{i} (X,\mathcal{A}) = \coprod_{x \in X_{(i)}} \Kzero\left(\derived^\bounded(\fl\mathcal{A}_x)\right),
  \end{equation}
	where~$\fl\mathcal{A}_x$ denotes the abelian category of right~$\mathcal{A}_x$-modules of finite length.
	\label{corsplitcycles}
\end{corollary}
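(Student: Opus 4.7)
The plan is to combine three already-established inputs: the abstract product decomposition~(\ref{eqnabstractsplitting}), the description of compact objects in~$\Gamma_x\mathcal{K}$ from Lemma~\ref{lmasplitcomp}, and the change-of-rings comparison of finite length from Lemma~\ref{lmaflsame}.

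First, combining~(\ref{eqnabstractsplitting}) with Lemma~\ref{lmasplitcomp} yields
\[\mathrm{Z}^{\Delta}_{i}(X,\mathcal{A}) = \coprod_{x \in X_{(i)}} \mathrm{K}_0\left(\mathrm{D^{perf}_{\lbrace x \rbrace}}(\mathcal{A}_x)\right),\]
so the first statement reduces to showing that, inside~$\mathrm{D^{perf}}(\mathcal{A}_x)$, having cohomology supported at the closed point $\lbrace x \rbrace$ of~$\mathrm{Spec}(\mathcal{O}_{X,x})$ is equivalent to having finite length cohomology. Since~$\mathcal{A}$ is coherent, $\mathcal{A}_x$ is module-finite over the noetherian local ring~$\mathcal{O}_{X,x}$ and is therefore itself noetherian, so every perfect complex over~$\mathcal{A}_x$ has finitely generated cohomology. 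For a finitely generated $\mathcal{O}_{X,x}$-module, being supported at the maximal ideal~$\mathfrak{m}_x$ is equivalent to annihilation by some power of~$\mathfrak{m}_x$, which by noetherianity is equivalent to having finite length over~$\mathcal{O}_{X,x}$. Lemma~\ref{lmaflsame} then identifies this with having finite length over~$\mathcal{A}_x$, establishing the first equality.

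For the second equality, the finite global dimension hypothesis on~$\mathcal{A}$ passes to each stalk, giving $\mathrm{D^{perf}}(\mathcal{A}_x) = \mathrm{D^b}(\mathrm{Coh}(\mathcal{A}_x))$. Viewing~$\mathcal{A}_x$ as a coherent algebra on the noetherian separated scheme~$\mathrm{Spec}(\mathcal{O}_{X,x})$ and applying Corollary~\ref{corverdiervsserre} with $p=0$, the argument from the first part identifies $\mathrm{Coh}(\mathcal{A}_x)_{\leq 0}$ with $\mathcal{A}_x\mathrm{-fl.}$, yielding an equivalence
\[\mathrm{D^b}(\mathcal{A}_x\mathrm{-fl.}) \xrightarrow{\sim} \mathrm{D^b_{\leq 0}}(\mathrm{Coh}(\mathcal{A}_x)) = \mathrm{D^{perf}_{fl.}}(\mathcal{A}_x).\]
Taking~$\mathrm{K}_0$ and summing over $x \in X_{(i)}$ then produces the second formula.

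The technical substance lies in Lemmas~\ref{lmasplitcomp} and~\ref{lmaflsame} together with Corollary~\ref{corverdiervsserre}; the present step is essentially a packaging argument, and the only mild subtlety is ensuring that the two support conditions — support at the closed point of $\mathrm{Spec}(\mathcal{O}_{X,x})$ and finite length over~$\mathcal{A}_x$ — really agree for perfect (in particular, cohomologically finitely generated) complexes, which is precisely what Lemma~\ref{lmaflsame} supplies.
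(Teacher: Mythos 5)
Your argument is correct and follows the same route as the paper: reduce via~(\ref{eqnabstractsplitting}) and Lemma~\ref{lmasplitcomp} to identifying $\mathrm{D^{perf}_{\lbrace x \rbrace}}(\mathcal{A}_x)$ with $\mathrm{D^{perf}_{fl.}}(\mathcal{A}_x)$ using Lemma~\ref{lmaflsame}, then use finite global dimension and Corollary~\ref{corverdiervsserre} for the second formula. The only difference is that you spell out the equivalence between support at the closed point and finite $\mathcal{O}_{X,x}$-length slightly more explicitly than the paper does, which is fine.
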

\begin{proof}
  For the first statement, it suffices to prove that~$\derived^\perf_{\lbrace x \rbrace}(\mathcal{A}_x)  \cong \derived_{\fl}^\perf(\mathcal{A}_x)$ by \cref{lmasplitcomp}. This follows from \cref{lmaflsame} since a complex~$C^{\bullet} \in \derived^\perf(\mathcal{A}_x)$ has support in~$\lbrace x \rbrace$ if and only if~$\Supp\HH^\bullet(C^{\bullet})) \subset \lbrace x \rbrace$ if and only if~$\HH^\bullet(C^{\bullet})$ has finite~$\mathcal{O}_{X,x}$-length if and only if~$\HH^\bullet(C^{\bullet})$ has finite~$\mathcal{A}_x$-length.

	For the second assertion, \cref{corverdiervsserre} gives
	\begin{equation}
    \derived^\perf_{\lbrace x \rbrace}(\mathcal{A}_x) \cong \derived^\bounded_{\lbrace x \rbrace}(\catmod(\mathcal{A}_x)) \cong \derived^\bounded(\catmod(\mathcal{A}_x)_{\lbrace x \rbrace})
  \end{equation}
	Now a finitely generated right~$A_{x}$-module has support in~$\lbrace x \rbrace$ if and only if it has finite length as an~$R$-modules if and only if it has finite length as a right~$\mathcal{A}_x$-module by \cref{lmaflsame}. This shows that~$\catmod(\mathcal{A}_x)_{\lbrace x \rbrace} \cong \fl\mathcal{A}_x$ and finishes the proof.
\end{proof}

\Cref{corsplitcycles} makes it possible to give a computation of~$\ZZ_i^{\Delta}(X,\mathcal{A})$ in large generality.
\begin{theorem}
  \label{theorem:cycle-groups}
	Let~$X$ be a noetherian scheme and~$\mathcal{A}$ a coherent~$\mathcal{O}_X$-algebra of finite global dimension. Then
	\begin{equation}
    \ZZ^{\Delta}_{i} (X,\mathcal{A}) = \bigoplus_{x \in X_{(i)}} \mathbb{Z}^{r_x}
  \end{equation}
	where~$r_x < \infty$ is the number of isomorphism classes of simple right modules of~$\mathcal{A}_x$.
\end{theorem}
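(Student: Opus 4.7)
The plan is to combine the splitting established in Corollary~\ref{corsplitcycles} with a standard dévissage computation of the $\mathrm{K}_0$ of a length category.

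By Corollary~\ref{corsplitcycles}, since $\mathcal{A}$ is coherent of finite global dimension we already have
\[\mathrm{Z}^{\Delta}_{i}(X,\mathcal{A}) \cong \bigoplus_{x \in X_{(i)}} \mathrm{K}_0\!\left(\mathrm{D^b}(\mathcal{A}_x\mathrm{-fl.})\right),\]
so the whole task reduces to showing that $\mathrm{K}_0(\mathrm{D^b}(\mathcal{A}_x\mathrm{-fl.})) \cong \mathbb{Z}^{r_x}$ for each $x$. First I would replace the derived $\mathrm{K}_0$ by its abelian version: for any abelian category $\mathcal{C}$, the Euler characteristic $[C^\bullet] \mapsto \sum_i (-1)^i [\mathrm{H}^i(C^\bullet)]$ provides an isomorphism $\mathrm{K}_0(\mathrm{D^b}(\mathcal{C})) \xrightarrow{\sim} \mathrm{K}_0(\mathcal{C})$ (its inverse sends a module to the class of its complex concentrated in degree $0$). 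Applying this to $\mathcal{C} = \mathcal{A}_x\mathrm{-fl.}$ reduces the computation to $\mathrm{K}_0(\mathcal{A}_x\mathrm{-fl.})$.

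Next, $\mathcal{A}_x\mathrm{-fl.}$ is a length category: every object has a finite composition series by finite length. Hence the Jordan--Hölder theorem applies, and iterating the short exact sequences of a composition series in $\mathrm{K}_0$ shows that the classes of the simple right $\mathcal{A}_x$-modules generate $\mathrm{K}_0(\mathcal{A}_x\mathrm{-fl.})$, while Jordan--Hölder (uniqueness of composition multiplicities) ensures that these classes are $\mathbb{Z}$-linearly independent. This gives
\[\mathrm{K}_0(\mathcal{A}_x\mathrm{-fl.}) \;\cong\; \bigoplus_{[S]} \mathbb{Z}\cdot[S],\]
the free abelian group on the isomorphism classes of simple right $\mathcal{A}_x$-modules.

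Finally I would check that the number $r_x$ of such isomorphism classes is finite. Every simple right $\mathcal{A}_x$-module is annihilated by the Jacobson radical $\mathrm{J}(\mathcal{A}_x)$, so the simples of $\mathcal{A}_x$ coincide with those of $\mathcal{A}_x/\mathrm{J}(\mathcal{A}_x)$. Since $\mathcal{A}_x$ is module-finite over the noetherian local ring $(\mathcal{O}_{X,x},\mathfrak{m}_x)$, by \cite[Corollary~5.9]{MR1125071} (used already in the proof of Lemma~\ref{lmaflsame}) we have $\mathfrak{m}_x \mathcal{A}_x \subset \mathrm{J}(\mathcal{A}_x)$, so $\mathcal{A}_x/\mathrm{J}(\mathcal{A}_x)$ is a quotient of the finite-dimensional $k(x)$-algebra $\mathcal{A}_x/\mathfrak{m}_x\mathcal{A}_x$. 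A finite-dimensional (hence artinian) semisimple algebra over a field has only finitely many isomorphism classes of simple modules by Wedderburn, so $r_x < \infty$. Assembling these three steps yields the claimed formula. The only mildly subtle point is the very last one---making sure that finiteness of $r_x$ really follows from coherence of $\mathcal{A}$ alone, without any regularity hypothesis on $X$---but the Nakayama-style reduction to the finite-dimensional algebra $\mathcal{A}_x/\mathfrak{m}_x\mathcal{A}_x$ handles it cleanly.
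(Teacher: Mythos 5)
Your proposal is correct and follows essentially the same route as the paper: reduce via Corollary~\ref{corsplitcycles} to computing $\mathrm{K}_0(\mathcal{A}_x\mathrm{-fl.})$, obtain freeness on the simples by Jordan--H\"older d\'evissage, and get finiteness of $r_x$ by passing to $\mathcal{A}_x/\mathrm{J}(\mathcal{A}_x)$ using $\mathfrak{m}_x\mathcal{A}_x\subset\mathrm{J}(\mathcal{A}_x)$ as in Lemma~\ref{lmaflsame}. The only cosmetic difference is that you argue finiteness via finite-dimensionality of $\mathcal{A}_x/\mathfrak{m}_x\mathcal{A}_x$ over the residue field, whereas the paper phrases it as finite length over $\mathcal{O}_{X,x}$; these are the same observation.
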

\begin{proof}
	By \cref{corsplitcycles}, it suffices to show that~$\Kzero(\derived^\bounded(\fl\mathcal{A}_x) = \Kzero(\fl\mathcal{A}_x)=\mathbb{Z}^{r_x}$ with~$r_x < \infty$. From the proof of \cref{lmaflsame} we see that the simple~$\mathcal{A}_x$-modules correspond to the simple~$\mathcal{A}_x/\mathrm{J}(\mathcal{A}_x)$-modules, and that the latter algebra is of finite length over~$\mathcal{O}_{X,x}$. This implies that~$\mathcal{A}_x/\mathrm{J}(\mathcal{A}_x)$ is right Artinian and hence has~$r_x < \infty$ simple right modules (all of them occur in a composition series of~$\mathcal{A}_x$ over itself by the Jordan-H\"older theorem). A standard induction on the composition multiplicities of these simple modules shows that~$\Kzero(\fl\mathcal{A}_x)=\mathbb{Z}^{r_x}$ as desired.
\end{proof}

Let us finish the section with an easy observation concerning the vanishing of~$\ZZ^{\Delta}_i(X,\mathcal{A})$ and~$\CH^{\Delta}_i(X,\mathcal{A})$.
\begin{proposition}
  \label{proposition:vanishing-outside-range}
	Suppose~$\dim(\supp(\mathcal{A})) = n$. Then
	\begin{equation}
    \ZZ^{\Delta}_i (X,\mathcal{A})= \CH^{\Delta}_i(X,\mathcal{A}) = 0
  \end{equation}
	for all~$i > n$.
\end{proposition}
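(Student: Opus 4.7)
The plan is to show that the whole of $\mathcal{K}=\mathrm{D}(\mathrm{Qcoh}(\mathcal{A}))$ is already contained in $\mathcal{K}_{(n)}$, which immediately forces every subquotient $\mathcal{K}_{(i)}/\mathcal{K}_{(i-1)}$ with $i>n$ to be zero, and hence both $\mathrm{Z}^{\Delta}_i(X,\mathcal{A})$ and its quotient $\mathrm{CH}^{\Delta}_i(X,\mathcal{A})$ to vanish.

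First I would verify the key containment $\mathrm{supp}_{\mathcal{T}}(C^{\bullet})\subset\mathrm{supp}(\mathcal{A})$ for every $C^{\bullet}\in\mathcal{K}$. For $x\notin\mathrm{supp}(\mathcal{A})$ we have $\mathcal{A}_x=0$, so the stalk $U(C^{\bullet})_x$ is a complex of modules over the zero ring and therefore vanishes. Since the functor $\Gamma_x$ on $\mathrm{D}(\mathrm{Qcoh}(\mathcal{O}_X))$ can be computed stalkwise at $x$ (affine-locally it is $\mathrm{K}_{\infty}(\mathfrak{p})\otimes_R^{\mathrm{L}}(-)_{\mathfrak{p}}$), and the $\ast$-action is just the derived tensor product followed by the forgetful functor, we obtain $\Gamma_x(\mathbb{I})\ast C^{\bullet}=0$, i.e.\ $x\notin\mathrm{supp}_{\mathcal{T}}(C^{\bullet})$. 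Consequently $\dim\mathrm{supp}(C^{\bullet})\leq\dim\mathrm{supp}(\mathcal{A})=n$.

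Next I would invoke the local-to-global principle, which holds for the action of $\mathrm{D}(\mathrm{Qcoh}(\mathcal{O}_X))$ on $\mathrm{D}(\mathrm{Qcoh}(\mathcal{A}))$ (see Remark \ref{remlocglob}), together with Remark \ref{remdifferentsubcatdef}, in order to identify $\mathcal{K}_{(p)}=\{a\in\mathcal{K}:\dim\mathrm{supp}(a)\leq p\}$. The previous paragraph then says $\mathcal{K}_{(p)}=\mathcal{K}$ for every $p\geq n$, so $\mathcal{K}_{(i)}/\mathcal{K}_{(i-1)}=0$ whenever $i>n$. Passing to compact objects and applying $\mathrm{K}_0$ gives $\mathrm{Z}^{\Delta}_i(X,\mathcal{A})=0$, and the vanishing of $\mathrm{CH}^{\Delta}_i(X,\mathcal{A})$ follows from Definition \ref{definition:1510.00211}, where it appears as a quotient of $\mathrm{Z}^{\Delta}_i(X,\mathcal{A})$.

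The only delicate point is the first step: one cannot just appeal to Proposition \ref{propsubsetsubcatcoincide} directly because $\mathrm{supp}(\mathcal{A})$ need not be specialization-closed for a general quasi-coherent algebra. The remedy is the pointwise stalk argument above, which shows that every individual $\Gamma_x$ with $x\notin\mathrm{supp}(\mathcal{A})$ annihilates $C^{\bullet}$, and this is already enough to bound the dimension of $\mathrm{supp}_{\mathcal{T}}(C^{\bullet})$.
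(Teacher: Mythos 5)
Your proof is correct and follows essentially the same route as the paper: the paper's own argument simply asserts that $\mathcal{K}_{(i)}=\mathcal{K}_{(i-1)}=\mathcal{K}$ for $i>n$ and concludes, while you supply the justification that makes this true, namely that every object of $\mathrm{D}(\mathrm{Qcoh}(\mathcal{A}))$ has support contained in $\mathrm{supp}(\mathcal{A})$, so the filtration stabilises at level~$n$. The one small caveat is that your step ``$x\notin\mathrm{supp}(\mathcal{A})\Rightarrow\mathcal{A}_x=0$'' implicitly identifies $\mathrm{supp}$ with $\mathrm{Supp}$ via Proposition~\ref{propsupp=Supp}, and hence uses coherence of $\mathcal{A}$ --- which is the intended setting here.
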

\begin{proof}
	If~$i>n$, then~$\mathcal{K}_{i} = \mathcal{K}_{i-1} = \mathcal{K}$ and hence
	\begin{equation}
    \ZZ^{\Delta}_{i} (X,\mathcal{A})= \Kzero\left((\mathcal{K}_{(i)}/\mathcal{K}_{(i-1)})^\compact\right) = 0,
  \end{equation}
	which also implies~$\CH^{\Delta}_i(X,\mathcal{A}) = 0$.
\end{proof}

\subsection{Comparison to Chow groups of \texorpdfstring{$X$}{X} for coherent \texorpdfstring{$\mathcal{O}_X$}{OX}-algebras on regular schemes}
Suppose that~$\mathcal{A}$ is a coherent~$\mathcal{O}_X$-algebra and that~$X$ is regular. By definition of~$\supp$, the forgetful functor~$U: \derived(\Qcoh(\mathcal{A})) \to \derived(\Qcoh(\mathcal{O}_X))$ induces functors
\begin{equation}
  \derived(\Qcoh(\mathcal{A}))_{(p)} \to \derived(\Qcoh(\mathcal{O}_X))_{(p)}
\end{equation}
for all~$p \geq 0$. If~$C^{\bullet}$ is a perfect complex in~$\derived(\Qcoh(\mathcal{A}))$, then~$U(C^{\bullet})$ will be an object of~$\derived^\bounded(\coh(X)) = \derived^\perf(X)$ and hence~$U$ preserves compactness. Hence, we obtain a commutative diagram of functors
\begin{equation}
	\begin{tikzcd}
		\derived(\Qcoh(\mathcal{A}))_{(p)}^\compact \arrow{r} \arrow{d} \arrow{dr}& \underbrace{(\derived(\Qcoh(\mathcal{A}))_{(p)}^\compact/\derived(\Qcoh(\mathcal{A}))_{(p-1)}^\compact)^{\natural}}_{= (\derived(\Qcoh(\mathcal{A}))_{(p)}/\derived(\Qcoh(\mathcal{A}))_{(p-1)})^\compact} \arrow{dr} &\\
		\derived(\Qcoh(\mathcal{A}))_{(p+1)}^\compact \arrow{dr} & \derived(\Qcoh(\mathcal{O}_X))_{(p)}^\compact \arrow{r} \arrow{d}& \underbrace{(\derived(\Qcoh(\mathcal{O}_X))_{(p)}^\compact/\derived(\Qcoh(\mathcal{O}_X))_{(p-1)}^\compact)^{\natural}}_{= (\derived(\Qcoh(\mathcal{O}_X))_{(p)}/\derived(\Qcoh(\mathcal{O}_X))_{(p-1)})^\compact}\\
		& \derived(\Qcoh(\mathcal{O}_X))_{(p+1)}^\compact &
	\end{tikzcd}
	\label{eqncomptobase}
\end{equation}
in which the horizontal arrows are given by the Verdier quotient followed by the inclusion into the idempotent completion, the vertical arrows are inclusions and the diagonal ones are induced by~$U$.
\begin{remark}
	It is possible to construct the above diagram without assuming~$X$ to be regular: the main obstruction is for~$U$ to preserve compactness. This happens for example, when~$U$ admits a coproduct-preserving right adjoint. But the functor~$\RRR\sheafHom_{\mathcal{O}_X}(U(\mathcal{A}), -)$ is always right adjoint to~$U$. It will preserve coproducts if~$U(\mathcal{A})$ is a perfect complex over~$X$ by~\cite[proof right after Example 1.13]{MR1308405}. Hence, we see that, instead of assuming that~$X$ is regular, it suffices that~$U(\mathcal{A})$ is perfect. If~$X$ is regular this is, of course, always the case.
	\label{remcomparisonsingular}
\end{remark}

\begin{proposition}
	Suppose that~$\mathcal{A}$ is a coherent~$\mathcal{O}_X$-algebra on a noetherian regular scheme~$X$. Let~$\mathcal{T} = \derived(\Qcoh(\mathcal{O}_X))$ and~$\mathcal{K} = \derived(\Qcoh(\mathcal{A}))$. Then the forgetful functor~$\derived(\Qcoh(\mathcal{A}))_{(p)} \to \derived(\Qcoh(\mathcal{O}_X))_{(p)}$ induces group homomorphisms
	\begin{equation}
    \ZZ^{\Delta}_{p} (X,\mathcal{A}) \to \ZZ^{\Delta}_{p} (X,\mathcal{O}_X) = \ZZ_p(X) \quad \text{and} \quad \CH^{\Delta}_{p} (X,\mathcal{A}) \to \CH^{\Delta}_{p} (X,\mathcal{O}_X) = \CH_p(X)
  \end{equation}
	for all~$p \geq 0$.
	\label{propcomparisonmap}
\end{proposition}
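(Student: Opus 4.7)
The plan is to obtain the two homomorphisms by applying $\mathrm{K}_0$ to the commutative diagram~(\ref{eqncomptobase}), which already packages the construction of compatible triangulated functors on all relevant subcategories. What remains is to verify the underlying support compatibility that justifies the vertical arrows of the diagram, and then to chase the definition of the Chow group through it.

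The first step is to establish the equality $\supp_{\mathcal{T}}(a) = \supp(U(a))$ for every $a \in \mathcal{K}$. Since $U$ is conservative---a quasi-coherent $\mathcal{A}$-module vanishes if and only if its underlying $\mathcal{O}_X$-module does---and since $U(t \ast a) \cong t \otimes^{\mathrm{L}}_{\mathcal{O}_X} U(a)$ naturally in both variables, one has $\Gamma_x \mathbb{I} \ast a \neq 0$ in $\mathcal{K}$ if and only if $\Gamma_x U(a) \neq 0$ in $\mathcal{T}$. This produces the vertical forgetful maps $\mathcal{K}^c_{(p)} \to \mathcal{T}^c_{(p)}$ appearing in~(\ref{eqncomptobase}); preservation of compactness itself follows from the regularity hypothesis on $X$ via the argument reviewed in Remark~\ref{remcomparisonsingular} (so that $U(\mathcal{A})$ is perfect over $\mathcal{O}_X$).

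Applying $\mathrm{K}_0$ to the diagonal arrow between the idempotent completions in~(\ref{eqncomptobase}) immediately yields the cycle group homomorphism $\mathrm{Z}^{\Delta}_{p}(X,\mathcal{A}) \to \mathrm{Z}^{\Delta}_{p}(X,\mathcal{O}_X)$, and Theorem~\ref{thmchowrecover} identifies the target with $\mathrm{Z}_p(X)$. For the Chow map I would then perform a direct diagram chase: commutativity of~(\ref{eqncomptobase}) shows that the induced map $U_{\ast}\colon \mathrm{K}_0(\mathcal{K}^c_{(p)}) \to \mathrm{K}_0(\mathcal{T}^c_{(p)})$ intertwines both the inclusion $i$ and the composite $q^{\natural}$ of Definition~\ref{definition:1510.00211}. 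Consequently $U_{\ast}(\ker i_{\mathcal{A}}) \subset \ker i_{\mathcal{O}_X}$, so $q^{\natural}(\ker i_{\mathcal{A}})$ is sent into $q^{\natural}(\ker i_{\mathcal{O}_X})$, and the cycle map descends to the desired homomorphism $\mathrm{CH}^{\Delta}_p(X,\mathcal{A}) \to \mathrm{CH}^{\Delta}_p(X,\mathcal{O}_X) = \mathrm{CH}_p(X)$.

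The only genuine technical point is the support equality; everything else is formal bookkeeping around $\mathrm{K}_0$ and the relations defining~$\mathrm{CH}^{\Delta}_p$. I do not foresee any significant obstacle, since the heavy lifting (compact generation, the equivalence $(\mathcal{K}^c_{(p)}/\mathcal{K}^c_{(p-1)})^{\natural} \cong (\mathcal{K}_{(p)}/\mathcal{K}_{(p-1)})^c$, and the recovery of classical cycle and Chow groups) has already been established earlier in the paper.
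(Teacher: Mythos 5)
Your proposal is correct and follows essentially the same route as the paper: the paper's proof is exactly to apply $\mathrm{K}_0$ to diagram~(\ref{eqncomptobase}) and invoke Theorem~\ref{thmchowrecover}, with the support compatibility justified ``by definition of $\mathrm{supp}$'' and compactness preserved because $U(C^{\bullet})\in\mathrm{D^b}(\mathrm{Coh}(X))=\mathrm{D^{perf}}(X)$ for regular $X$ (your alternative via Remark~\ref{remcomparisonsingular} is the variant the paper itself records). Your additional details — conservativity of $U$, the projection formula $U(t\ast a)\cong t\otimes^{\mathrm{L}}_{\mathcal{O}_X}U(a)$, and the diagram chase showing $q^{\natural}(\ker i_{\mathcal{A}})$ lands in $q^{\natural}(\ker i_{\mathcal{O}_X})$ — are exactly the bookkeeping the paper leaves implicit.
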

\begin{proof}
	This follows immediately from \cref{thmchowrecover} and the definitions of~$\ZZ^{\Delta}_{p}(X,\mathcal{A})$ and~$\CH^{\Delta}_{p} (X,\mathcal{A})$ by applying~$\Kzero(-)$ to~(\ref{eqncomptobase}).
\end{proof}

\begin{remark}
	If in \cref{propcomparisonmap} we only assume that~$U(\mathcal{A})$ is perfect instead of~$X$ being regular (see \cref{remcomparisonsingular}), then~$U$ still gives group homomorphisms
	\begin{equation}
    \ZZ^{\Delta}_{p} (X,\mathcal{A}) \to \ZZ^{\Delta}_{p} (X,\mathcal{O}_X) \quad \text{and} \quad \CH^{\Delta}_{p} (X,\mathcal{A}) \to \CH^{\Delta}_{p} (X,\mathcal{O}_X)
  \end{equation}
	for all~$p \geq 0$.
\end{remark}

\begin{remark}
	Extension of scalars~$\mathcal{A} \otimes^\LLL_{\mathcal{O}_X} -$ has a a coproduct-preserving right adjoint~$U$ and hence preserves compact object. For~$C^{\bullet} \in \derived^\perf(X)$, we have
	\begin{equation}
    \supp(\mathcal{A}  \otimes^\LLL_{\mathcal{O}_X} C^{\bullet}) = \supp(\mathcal{A})  \cap \supp(C^{\bullet})
  \end{equation}
	from which we deduce that~$\mathcal{A} \otimes^\LLL_{\mathcal{O}_X} -$ restricts to
	\begin{equation}
    \derived^\perf(\mathcal{O}_X)_{(p)} \to \derived^\perf(\mathcal{A})_{(p)}
  \end{equation}
	for all~$p \geq 0$. Hence, by a similar argument as for~$U$, we obtain that extension of scalars induces morphisms~$\CH^{\Delta}_{p} (X,\mathcal{O}_X) \to \CH^{\Delta}_{p} (X,\mathcal{A})$. Note however, that if~$\dim(\supp(\mathcal{A})) = q$, then these morphisms are necessarily trivial for~$p >q$ since~$\ZZ^{\Delta}_{p} (X,\mathcal{A}) = \CH^{\Delta}_{p} (X,\mathcal{A}) = 0$ in this case by \cref{proposition:vanishing-outside-range}.
\end{remark}

\section{The case of coherent commutative \texorpdfstring{$\mathcal{O}_X$}{OX}-algebras}
\label{sectioncommcohalg}
In the following, we will show, how the framework we have set up lets us deal with finite morphisms between noetherian schemes. Let~$X$ be a noetherian separated scheme and~$\mathcal{A}$ a \emph{commutative}~$\mathcal{O}_X$-algebra which is \emph{coherent} as an~$\mathcal{O}_X$-module. Then~$\mathcal{A}$ corresponds to an affine morphism~$\varphi: Y \coloneqq \relSpec\mathcal{A} \to X$ and there is an equivalence of categories~$\Theta:\Qcoh(\mathcal{A}) \cong \Qcoh(\mathcal{O}_Y)$ that makes the following diagram commute up to natural isomorphism:

\begin{equation}
\begin{tikzcd}
  \Qcoh(\mathcal{A}) \arrow{rr}{\Theta}  \arrow{rd}{U} & & \Qcoh(\mathcal{O}_Y) \arrow{ld}{\varphi_*} \\
& \Qcoh(\mathcal{O}_X)&
\end{tikzcd}
\label{eqncoherentcommutative}
\end{equation}

Let us note that~$\Theta$ also restricts to an equivalence between the subcategories of coherent modules and the restriction makes a diagram similar to (\ref{eqncoherentcommutative}) commute, with~$\Qcoh(-)$ replaced by~$\coh(-)$. The following three results should be well-known.

\begin{lemma}
	The morphism~$\varphi$ is finite. In particular,~$Y$ is noetherian and separated.
	\label{lmacohalgfinmor}
\end{lemma}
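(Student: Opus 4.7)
The plan is to unwind the definitions: a finite morphism is exactly an affine morphism whose structure sheaf is finite (i.e.\ coherent) as a sheaf of modules on the base. Both conditions are built into our hypotheses.

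First, by the universal property of the relative $\mathbf{Spec}$ construction, the morphism $\varphi\colon Y = \mathbf{Spec}(\mathcal{A}) \to X$ is affine, and for every affine open $U = \Spec(R) \subset X$ we have $\varphi^{-1}(U) = \Spec(A)$ with $A = \Gamma(U, \mathcal{A})$. Since $\mathcal{A}$ is coherent as an $\mathcal{O}_X$-module and $X$ is noetherian, the $R$-module $A$ is finitely generated; this is precisely the definition of $\varphi$ being a finite morphism.

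For the noetherian assertion, I would cover $X$ by finitely many affine opens $U_i = \Spec(R_i)$ (possible since $X$ is noetherian and hence quasi-compact). On each such open, $A_i = \Gamma(U_i, \mathcal{A})$ is a finite module over the noetherian ring $R_i$, and a ring which is finite over a noetherian ring is itself noetherian. Thus $Y$ is covered by the finitely many affine opens $\Spec(A_i)$, each the spectrum of a noetherian ring, so $Y$ is noetherian. For separatedness, one can either appeal to the fact that finite morphisms are proper and hence separated, or more directly observe that affine morphisms are separated and that the composition of separated morphisms is separated: since $X \to \Spec(\mathbb{Z})$ is separated by hypothesis and $\varphi$ is separated, the structure morphism $Y \to \Spec(\mathbb{Z})$ is separated as well, which is the definition of $Y$ being a separated scheme.

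There is no real obstacle here; the statement is essentially a reminder that ``commutative coherent $\mathcal{O}_X$-algebra'' is just another name for ``finite morphism to $X$'', and the preservation of the noetherian and separated properties under finite morphisms is entirely standard.
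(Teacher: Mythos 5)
Your proof is correct and follows the same route as the paper: identify $\varphi^{-1}(U)=\Spec(\Gamma(U,\mathcal{A}))$ over each affine open $U=\Spec(R)$ and use coherence of $\mathcal{A}$ to see that $\Gamma(U,\mathcal{A})$ is a finite $R$-module, hence $\varphi$ is finite. The paper's proof stops there and leaves the (standard) deduction that $Y$ is noetherian and separated implicit, whereas you spell it out; both versions are fine.
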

\begin{proof}
	This is an immediate consequence of the construction of~$\relSpec\mathcal{A}$: over each open affine~$U = \Spec R$ of~$X$ lies an open affine~$\Spec\mathcal{A}(U)$, and~$\mathcal{A}(U)$ is a finite~$R$-module since~$\mathcal{A}$ was assumed to be a coherent sheaf on~$X$.
\end{proof}

\begin{lemma}
	Let~$f: Y \to X$ be a morphism of schemes and assume~$X$ locally noetherian.
	\begin{enumerate}
		\item For any coherent~$\mathcal{O}_X$-module~$M$, we have~$\Supp(f^*(M)) = f^{-1}(\Supp(M))$.
		\item Suppose~$f$ is finite. For any closed subset~$Z \subset \im(f)$, we have
		\begin{equation}
      \dim(f^{-1}(Z)) = \dim(Z)
    \end{equation}
		and for any closed set~$W \subset Y$, we have
		\begin{equation}
      \dim(f(W)) = \dim(W)
    \end{equation}
	\end{enumerate}
	\label{lmadimfinmor}
\end{lemma}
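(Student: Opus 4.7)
For part (1), the plan is to work stalk-locally and invoke Nakayama. Since $f$ is a morphism of ringed spaces, for any $y \in Y$ we have $(f^*M)_y \cong M_{f(y)} \otimes_{\mathcal{O}_{X,f(y)}} \mathcal{O}_{Y,y}$, and coherence of $M$ ensures that this is a finitely generated $\mathcal{O}_{Y,y}$-module (here we use that $X$ is locally noetherian, so $M_{f(y)}$ is finitely generated over $\mathcal{O}_{X,f(y)}$). By Nakayama, $(f^*M)_y \neq 0$ if and only if $(f^*M)_y \otimes_{\mathcal{O}_{Y,y}} \kappa(y) \neq 0$. The latter module is isomorphic to $(M_{f(y)}/\mathfrak{m}_{f(y)} M_{f(y)}) \otimes_{\kappa(f(y))} \kappa(y)$, which, as $\kappa(y)$ is a nonzero $\kappa(f(y))$-vector space, is nonzero precisely when $M_{f(y)}/\mathfrak{m}_{f(y)} M_{f(y)} \neq 0$, i.e.\ by Nakayama when $M_{f(y)} \neq 0$, i.e.\ when $f(y) \in \mathrm{Supp}(M)$. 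This gives both inclusions simultaneously.

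For part (2), the plan is to reduce to the affine case and use the classical properties of integral ring extensions. Since dimension is computed on closed irreducible subsets and $f$ being finite is an affine property, I would cover $X$ by affine opens $U = \Spec(R)$, note that $f^{-1}(U) = \Spec(S)$ with $S$ a module-finite $R$-algebra, and translate both claims to statements about chains of primes under the integral extension $R \to S/I$ (or a quotient thereof). Going-up lifts a chain of primes in $R$ contained in the image of $\Spec(S) \to \Spec(R)$ to a chain of the same length in $S$, while incomparability forbids strict inclusions among primes of $S$ that contract to the same prime of $R$; combined, they yield $\dim f^{-1}(Z) = \dim Z$ for any closed $Z \subset \mathrm{im}(f)$.

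For the equality $\dim f(W) = \dim W$ I would exploit that integral morphisms are closed (so $f(W)$ is indeed a closed subset of $X$), reduce to $W$ irreducible by decomposing into irreducible components and taking the maximum on both sides, and then apply the finite surjection $W \to f(W)$ together with the previous paragraph to conclude $\dim W = \dim f^{-1}(f(W)) \cap W$-style bounds; more directly, going-up and incomparability applied to this finite surjection give both $\dim W \leq \dim f(W)$ and $\dim W \geq \dim f(W)$.

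The only mild subtlety I anticipate is the bookkeeping with the hypothesis $Z \subset \mathrm{im}(f)$ in the first equality of (2): without it one could have a chain in $Z$ that fails to lift because some prime does not lie in the image, so I would be careful to verify at the outset that each generic point of an irreducible component of $Z$ is hit by $f$. Everything else is routine translation between the geometric statements and the commutative-algebra statements about integral extensions.
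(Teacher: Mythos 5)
Your argument is correct and follows essentially the same route as the paper's own sketch: part (1) is the stalkwise Nakayama computation underlying the Atiyah--Macdonald exercise the paper cites, and part (2) unpacks, via going-up and incomparability, the standard fact that finite surjective morphisms preserve Krull dimension, with the second equality deduced from the first by restricting to $W \to f(W)$ exactly as in the paper. Your remark about needing $Z \subset \mathrm{im}(f)$ so that the generic points of the components of $Z$ lift is the right point of care, and nothing further is missing.
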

\begin{proof}[Sketch of the proof]
	For the first assertion we can assume that~$X,Y$ are affine, in this case the statement is proved in \cite[exercise~3.19(viii)]{MR242802}. For the second statement, we consider the fibre square
		\begin{equation}
		  \begin{tikzcd}
        f^{-1}Z \arrow{r} \arrow{d} & Y \arrow{d}{f} \\
		  	Z \arrow{r} & \im(f)
		  \end{tikzcd}
		\end{equation}
		and use that for finite and surjective morphisms, domain and codomain have the same Krull dimension. The last assertion follows from the second one by considering the composition~$f|_W: W \to Y \xrightarrow{f} X$.
\end{proof}

%\begin{proof}
%	For the first assertion we can assume that~$X = \Spec(R), Y = \Spec(S)$ are affine and that~$M$ is a finitely generated~$R$-module. This statement is proved in Atiyah-MacDonald, Chap. 3, exercise 19(viii).
%
%	For the second statement, let~$Z \subset X$ be a closed subset and endow it with the reduced induced scheme structure. Then we have a fiber square
%	\begin{equation}
%	\begin{tikzcd}
%		f^{-1}Z \arrow[r] \arrow[d] & Y \arrow[d,"f"] \\
%		Z \arrow[r] & \im(f)
%	\end{tikzcd}
%	\end{equation}
%	where the horizontal morphisms are closed immersions and the vertical ones are finite and surjective, since all three properties are stable under base change. It follows that~$\dim(f^{-1}Z) = \dim(Z)$ since for a finite and surjective morphism, domain and codomain have the same Krull dimension.
%
%	For the last equality, assume that~$W \subset Y$ is closed and endow it with the reduced induced scheme structure. The inclusion map~$W \to Y$ is finite, and hence the composition~$f|_W: W \to Y \xrightarrow{f} X$ is finite. Since~$f|_W$ maps surjectively onto~$f(W)$ it follows that~$W$ and~$f(W)$ have the same Krull dimension.
%\end{proof}

\begin{proposition}
	Suppose~$X$ is a locally noetherian scheme and~$f: X \to Y$ is an affine closed morphism and~$M$ a quasi-coherent~$\mathcal{O}_X$-module. Then~$\Supp(f_*M) = f(\Supp(M))$.
	\label{propsuppdirectcommute}
\end{proposition}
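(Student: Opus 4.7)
The plan is to reduce the statement to a purely algebraic question about rings and modules and then use closedness of $f$ as the only nontrivial ingredient. First, I would observe that both sides of the claimed equality are local on $Y$: if $\{U_i\}_{i\in I}$ is an affine open cover of $Y$, then setting $V_i \coloneqq f^{-1}(U_i)$ (which is affine since $f$ is affine) and $f_i \coloneqq f|_{V_i}\colon V_i \to U_i$, one easily checks that $\mathrm{Supp}(f_*M) \cap U_i = \mathrm{Supp}((f_i)_* (M|_{V_i}))$ and $f(\mathrm{Supp}(M)) \cap U_i = f_i(\mathrm{Supp}(M|_{V_i}))$. Moreover, each $f_i$ remains closed: if $Z \subset V_i$ is closed and $Z = Z' \cap V_i$ for some $Z'$ closed in $X$, then $f_i(Z) = f(Z') \cap U_i$, which is closed in $U_i$. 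Thus, after reduction, we may assume $X = \mathrm{Spec}(B)$, $Y = \mathrm{Spec}(A)$ and $M$ corresponds to a $B$\dash module, still denoted $M$, whose $A$\dash module structure (via the ring map $A \to B$) gives $f_*M$.

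For the easy inclusion $f(\mathrm{Supp}(M)) \subseteq \mathrm{Supp}(f_*M)$, I would take $\mathfrak{q} \in \mathrm{Supp}(M)$, set $\mathfrak{p} = \mathfrak{q} \cap A$, and note that $M_\mathfrak{q}$ is obtained from $M_\mathfrak{p}$ by further localizing at the image of $B \setminus \mathfrak{q}$ in $B_\mathfrak{p}$. Hence $M_\mathfrak{p} \neq 0$, which is exactly the statement $\mathfrak{p} \in \mathrm{Supp}(f_*M)$.

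For the reverse inclusion, take $\mathfrak{p} \in \mathrm{Supp}(f_*M)$, i.e.\ $M_\mathfrak{p} \neq 0$ as an $A$\dash localization. Pick $m \in M$ with $m/1 \neq 0$ in $M_\mathfrak{p}$, equivalently $\mathrm{Ann}_A(m) \subseteq \mathfrak{p}$. Observe that $\mathrm{Ann}_A(m) = \mathrm{Ann}_B(m) \cap A$, so the induced map $A/\mathrm{Ann}_A(m) \hookrightarrow B/\mathrm{Ann}_B(m)$ is an injective ring map. The main step, and the only genuine obstacle, is to show that the induced map on spectra $V(\mathrm{Ann}_B(m)) \to V(\mathrm{Ann}_A(m))$ is surjective. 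The standard fact that an injective ring map induces a dense image on spectra (the image contains all minimal primes of the target) gives density; combined with closedness of $f$, the image is both dense and closed in $V(\mathrm{Ann}_A(m))$, hence equals $V(\mathrm{Ann}_A(m))$. Since $\mathfrak{p} \in V(\mathrm{Ann}_A(m))$, there exists $\mathfrak{q} \in V(\mathrm{Ann}_B(m))$ with $f(\mathfrak{q}) = \mathfrak{p}$, and $\mathrm{Ann}_B(m) \subseteq \mathfrak{q}$ implies $m/1 \neq 0$ in $M_\mathfrak{q}$, so $\mathfrak{q} \in \mathrm{Supp}(M)$ and $\mathfrak{p} \in f(\mathrm{Supp}(M))$, completing the proof.
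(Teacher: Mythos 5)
Your proof is correct, and it takes a genuinely different route from the one in the paper. The paper argues stalkwise on $Y$: using closedness of $f$ it identifies $(f_*M)_y$ with the colimit of $M(V)$ over opens $V$ containing the whole fibre $f^{-1}(y)$, places that fibre inside an affine chart $\mathrm{Spec}(R)$, and computes the colimit as $S^{-1}M$ with $S = R \setminus \bigcup_{\mathfrak{p}\in f^{-1}(y)}\mathfrak{p}$, from which the support statement is read off. You instead localize on $Y$ to reduce to a ring map $A\to B$ and argue elementwise: for $\mathfrak{p}\in\mathrm{Supp}(f_*M)$ you choose a witness $m$ with $\mathrm{Ann}_A(m)\subseteq\mathfrak{p}$, note that $A/\mathrm{Ann}_A(m)\hookrightarrow B/\mathrm{Ann}_B(m)$ is injective, and combine the resulting density of the image of $V(\mathrm{Ann}_B(m))\to V(\mathrm{Ann}_A(m))$ with closedness of $f$ to get surjectivity. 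Both proofs use closedness as the essential input, but in different places. The paper's approach yields the stronger byproduct of an explicit description of the stalks of $f_*M$; your approach has the advantage of bypassing the delicate last step of the paper's sketch, namely that $S^{-1}M=0$ follows from $M_{\mathfrak{p}}=0$ for all $\mathfrak{p}\in f^{-1}(y)$, which for an infinite fibre requires a prime-avoidance-type argument that is not automatic, whereas your annihilator argument needs nothing of the sort. Neither proof uses the locally noetherian hypothesis.
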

\begin{proof}[Sketch of the proof]
		We shall compute the stalks of the sheaf~$f_*M$ at~$y \in Y$. Since~$f$ is closed, this can be done using all opens on~$X$, i.e.~$(f_*M)_y = \varinjlim_{V \supset f^{-1}(y)} M(V)$. The set~$f^{-1}(y)$ will be contained in an affine open~$\Spec R\subset X$ because~$f$ is affine and hence, we can assume that~$M$ is an~$R$-module and~$f^{-1}(y) =: P$ is a set of prime ideals of~$R$. We rewrite
		\begin{equation}
      (f_*M)_y = \varinjlim_{V \supset f^{-1}(y)} M(V) = \varinjlim_{D(r) \supset P} M_r,
    \end{equation}
		where~$D(r)$ runs over the basic opens of~$\Spec(R)$ that contain~$P$. From this, we see that~$(f_*M)_y = S^{-1}M$, where~$S \coloneqq R \setminus \bigcup_{\mathfrak{p} \in P} \mathfrak{p}$. It follows that~$(f_*M)_y = 0$ if and only if~$M_{\mathfrak{p}} = 0$ for all~$\mathfrak{p} \in P = f^{-1}(y)$, which proves the claim.
\end{proof}

\begin{corollary}
	The equivalence~$\Theta: \Qcoh(\mathcal{A}) \to \Qcoh(\mathcal{O}_Y)$ respects dimension of support: if~$M \in \Qcoh(\mathcal{A})$, then~$\dim(\Supp_X(M)) = \dim(\Supp_Y(\Theta(M)))~$. Hence,~$\Theta$ induces exact equivalences
	\begin{equation}
    \Qcoh(\mathcal{A})_{\leq p} \xrightarrow{\sim} \Qcoh(\mathcal{O}_Y)_{\leq p}
  \end{equation}
%	and
%	\begin{equation}\coh(\mathcal{A})_{\leq p} \xrightarrow{\sim} \coh(\mathcal{O}_Y)_{\leq p}\end{equation}
	for all~$p \geq 0$.
	\label{corfinitedimpres}
\end{corollary}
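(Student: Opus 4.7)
The plan is to reduce both claims to a computation of $\mathrm{Supp}_X(M)$ in terms of $\mathrm{Supp}_Y(\Theta(M))$, and then leverage the fact that $\varphi$ is a finite morphism to conclude that applying $\varphi$ preserves dimension. The key inputs are the commutative diagram~(\ref{eqncoherentcommutative}), Lemma~\ref{lmacohalgfinmor}, Proposition~\ref{propsuppdirectcommute}, and Lemma~\ref{lmadimfinmor}(2).

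First, the diagram~(\ref{eqncoherentcommutative}) yields $U(M) \cong \varphi_*(\Theta(M))$, so by the definition of $\mathrm{Supp}_X$ for $\mathcal{A}$-modules, $\mathrm{Supp}_X(M) = \mathrm{Supp}(\varphi_*(\Theta(M)))$. Since $\varphi$ is finite by Lemma~\ref{lmacohalgfinmor}, it is in particular affine and closed, so Proposition~\ref{propsuppdirectcommute} applies and gives
\[\mathrm{Supp}_X(M) = \varphi(\mathrm{Supp}_Y(\Theta(M)))~.\]

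The main technical obstacle now is that Lemma~\ref{lmadimfinmor}(2) requires a \emph{closed} subset of $Y$, whereas the support of a general quasi-coherent module need not be closed (or even specialization-closed). We circumvent this by a pointwise argument. For any $y \in Y$, the set $\varphi(\overline{\{y\}})$ is closed (since $\varphi$ is closed) and irreducible, and contains $\varphi(y)$, so it must equal $\overline{\{\varphi(y)\}}$. Applying Lemma~\ref{lmadimfinmor}(2) to the closed set $W = \overline{\{y\}}$ then gives
\[\dim(\varphi(y)) = \dim(\overline{\{\varphi(y)\}}) = \dim(\overline{\{y\}}) = \dim(y)~.\]
Taking the maximum over $y \in S := \mathrm{Supp}_Y(\Theta(M))$ in the paper's definition $\dim(S) = \max_{P \in S} \dim(P)$, and using the description $\mathrm{Supp}_X(M) = \varphi(S)$ from the previous paragraph, we obtain $\dim(\mathrm{Supp}_X(M)) = \dim(\mathrm{Supp}_Y(\Theta(M)))$.

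For the second statement, the dimension equality shows that $\Theta$ restricts to a functor $\mathrm{Qcoh}(\mathcal{A})_{\leq p} \to \mathrm{Qcoh}(\mathcal{O}_Y)_{\leq p}$, and the symmetric statement for $\Theta^{-1}$ (which is handled identically, since $\Theta$ is an equivalence and the support statement is symmetric) shows it is essentially surjective. Exactness is automatic since $\Theta$ was already an exact equivalence of abelian categories, and so is its restriction to a Serre subcategory.
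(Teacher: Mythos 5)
Your proof follows essentially the same route as the paper: identify $\mathrm{Supp}_X(M)=\varphi(\mathrm{Supp}_Y(\Theta(M)))$ via the diagram~(\ref{eqncoherentcommutative}) and Proposition~\ref{propsuppdirectcommute}, then use finiteness of $\varphi$ (Lemma~\ref{lmacohalgfinmor}, Lemma~\ref{lmadimfinmor}) to see that taking the image preserves dimension. Your pointwise reduction to the closed sets $\overline{\{y\}}$ is a sensible extra step, since the paper applies Lemma~\ref{lmadimfinmor}(2) directly to the support even though the support of a merely quasi-coherent module need not be closed; this detail is correct and, if anything, tightens the published argument.
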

\begin{proof}
	By definition and (\ref{eqncoherentcommutative}), we have
	\begin{equation}
    \dim(\Supp(M)) = \dim(\Supp_X(U(M))) = \dim(\Supp_X(\varphi_*(\Theta(M)))).
  \end{equation}
	Since~$\mathcal{A}$ was assumed to be coherent,~$\varphi$ is finite by \cref{lmacohalgfinmor} and it follows from \cref{propsuppdirectcommute} that
	\begin{equation}
    \dim(\Supp_X(\varphi_*(\Theta(M)))) = \dim(\varphi(\Supp_X(\Theta(M))))
  \end{equation}
	as finite morphisms are in particular affine and (universally) closed. By \cref{lmadimfinmor}, the latter quantity is equal to~$\dim(\Supp_Y(\Theta(M)))$, which proves the claim.
\end{proof}

\begin{corollary}
	The functor~$\Theta$ induces an equivalence
	\begin{equation}
    \derived(\Qcoh(\mathcal{A}))_{(p)} \cong \derived(\Qcoh(\mathcal{O}_Y))_{(p)}
  \end{equation}
	for all~$p \geq 0$.
	\label{corthetasubcatequiv}
\end{corollary}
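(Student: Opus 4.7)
The plan is to reduce the statement to a purely support-theoretic assertion and then apply Corollary \ref{corfinitedimpres}.

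First, I would unfold both sides using Proposition \ref{propsubsetsubcatcoincide}. Applied to the action of $\mathrm{D}(\mathrm{Qcoh}(\mathcal{O}_X))$ on $\mathrm{D}(\mathrm{Qcoh}(\mathcal{A}))$ with $V=V_{\leq p}\subset X$, it gives
\[\mathrm{D}(\mathrm{Qcoh}(\mathcal{A}))_{(p)} = \mathrm{D}_{V_{\leq p}}(\mathrm{Qcoh}(\mathcal{A})) = \{C^\bullet : \mathrm{Supp}_X(\mathrm{H}^*(C^\bullet)) \subset V_{\leq p}\},\]
and applied to the self-action of $\mathrm{D}(\mathrm{Qcoh}(\mathcal{O}_Y))$ (the case $\mathcal{A}=\mathcal{O}_Y$ of the same proposition, with $Y$ in place of $X$, using Lemma \ref{lmacohalgfinmor} to guarantee that $Y$ is noetherian and separated) it gives the analogous description of $\mathrm{D}(\mathrm{Qcoh}(\mathcal{O}_Y))_{(p)}$ in terms of dimension of support in $Y$.

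Next, since $\Theta: \mathrm{Qcoh}(\mathcal{A}) \to \mathrm{Qcoh}(\mathcal{O}_Y)$ is an exact equivalence of abelian categories, it extends to an exact equivalence of derived categories
\[\mathrm{D}(\Theta): \mathrm{D}(\mathrm{Qcoh}(\mathcal{A})) \xrightarrow{\sim} \mathrm{D}(\mathrm{Qcoh}(\mathcal{O}_Y))\]
which commutes with the formation of cohomology objects: $\mathrm{H}^i(\mathrm{D}(\Theta)(C^\bullet)) \cong \Theta(\mathrm{H}^i(C^\bullet))$ in $\mathrm{Qcoh}(\mathcal{O}_Y)$.

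To conclude, I would apply Corollary \ref{corfinitedimpres} cohomology-object-wise: for every $i$,
\[\dim\bigl(\mathrm{Supp}_X(\mathrm{H}^i(C^\bullet))\bigr) = \dim\bigl(\mathrm{Supp}_Y(\Theta(\mathrm{H}^i(C^\bullet)))\bigr) = \dim\bigl(\mathrm{Supp}_Y(\mathrm{H}^i(\mathrm{D}(\Theta)(C^\bullet)))\bigr).\]
Taking the supremum over $i$, the condition $\mathrm{Supp}_X(\mathrm{H}^*(C^\bullet)) \subset V_{\leq p}$ is equivalent to $\mathrm{Supp}_Y(\mathrm{H}^*(\mathrm{D}(\Theta)(C^\bullet))) \subset V_{\leq p}$. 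Therefore $\mathrm{D}(\Theta)$ and its quasi-inverse restrict to the indicated full subcategories, yielding the claimed equivalence. There is no real obstacle here; the only point requiring a little care is verifying that the ambient hypotheses of Proposition \ref{propsubsetsubcatcoincide} hold on the $Y$-side, which is exactly what Lemma \ref{lmacohalgfinmor} supplies.
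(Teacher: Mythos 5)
Your argument is correct and follows essentially the same route as the paper's own proof: identify $\mathcal{K}_{(p)}$ with the subcategory of complexes whose cohomology has support of dimension $\leq p$ via Proposition \ref{propsubsetsubcatcoincide}, pass the exact equivalence $\Theta$ to derived categories, and invoke Corollary \ref{corfinitedimpres} to see that dimension of support is preserved. Your extra remark that Lemma \ref{lmacohalgfinmor} is needed to verify the hypotheses on the $Y$-side is a welcome point of care that the paper leaves implicit.
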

\begin{proof}
	The equivalence~$\Theta$ is exact (as any equivalence of abelian categories) and hence induces and equivalence~$\derived(\Qcoh(\mathcal{A})) \cong \derived(\Qcoh(\mathcal{O}_Y))$. Now, it suffices to remark that for~$C^{\bullet} \in \derived(\Qcoh(\mathcal{A}))$ we have
	\begin{align*}
	C^{\bullet} \in \derived(\Qcoh(\mathcal{A}))_{(p)} &\Leftrightarrow \HH^\bullet(C^{\bullet}) \in \Qcoh(\mathcal{A})_{\leq p} \\
	&\Leftrightarrow \HH^\bullet(\Theta(C^{\bullet})) \in \Qcoh(\mathcal{O}_Y)_{\leq p}  \\
	&\Leftrightarrow \Theta(C^{\bullet}) \in \derived(\Qcoh(\mathcal{O}_X))_{(p)}
	\end{align*}
	where we used \cref{propsubsetsubcatcoincide} and \cref{corfinitedimpres}.
\end{proof}

\begin{theorem}
	Let~$X$ be a separated scheme of finite type over a field and~$\mathcal{A}$ a coherent sheaf of \emph{commutative}~$\mathcal{O}_X$-algebras. Then
	\begin{equation}
    \CH^{\Delta}_{p}(X,\mathcal{A}) \cong \CH^{\Delta}_{p}(Y,\mathcal{O}_Y)
  \end{equation}
	for all~$p \geq 0$. In particular if~$\relSpec\mathcal{A}$ is regular ($\Leftrightarrow \mathcal{A}$ has finite global dimension), then
	\begin{equation}
    \CH^{\Delta}_{p}(X,\mathcal{A}) \cong \CH_p(\relSpec\mathcal{A}).
  \end{equation}
	\label{thmcommrecover}
\end{theorem}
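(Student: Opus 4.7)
The plan is to bootstrap from the categorical equivalence already established in the previous subsection. By Corollary \ref{corthetasubcatequiv}, the equivalence $\Theta\colon\mathrm{Qcoh}(\mathcal{A})\to\mathrm{Qcoh}(\mathcal{O}_Y)$ derives to an equivalence $\mathrm{D}(\mathrm{Qcoh}(\mathcal{A}))\cong\mathrm{D}(\mathrm{Qcoh}(\mathcal{O}_Y))$ which, crucially, identifies the filtration piece $\mathrm{D}(\mathrm{Qcoh}(\mathcal{A}))_{(p)}$ arising from the action of $\mathrm{D}(\mathrm{Qcoh}(\mathcal{O}_X))$ with the filtration piece $\mathrm{D}(\mathrm{Qcoh}(\mathcal{O}_Y))_{(p)}$ arising from the action of $\mathrm{D}(\mathrm{Qcoh}(\mathcal{O}_Y))$ on itself. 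Since $\mathrm{CH}^{\Delta}_p$ is built only from this filtration and from general categorical operations (passage to compact objects, Verdier quotient, idempotent completion, $\mathrm{K}_0$), the plan is to check that $\Theta$ preserves each of these and then read off the claimed isomorphism.

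More concretely, I would first observe that any triangulated equivalence automatically preserves compact objects, Verdier quotients and idempotent completions. Combined with Corollary \ref{corthetasubcatequiv}, this yields compatible equivalences
\[\mathrm{D}(\mathrm{Qcoh}(\mathcal{A}))^c_{(p)}\xrightarrow{\sim}\mathrm{D}(\mathrm{Qcoh}(\mathcal{O}_Y))^c_{(p)}\]
for all $p$, and the inclusion and Verdier quotient maps on the left match those on the right under $\Theta$. Applying $\mathrm{K}_0$ to the entire diagram appearing before Definition \ref{definition:1510.00211}, I would then get an isomorphism of the pair $(q^{\natural},i)$ on both sides. This produces the first claimed isomorphism $\mathrm{CH}^{\Delta}_p(X,\mathcal{A})\cong\mathrm{CH}^{\Delta}_p(Y,\mathcal{O}_Y)$ directly from the definition. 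The second statement then follows from Theorem \ref{thmchowrecover} applied to the regular separated scheme $Y=\mathbf{Spec}(\mathcal{A})$, together with the standard equivalence (Serre/Auslander--Buchsbaum--Serre on the stalks $\mathcal{O}_{Y,y}$, which are exactly the stalks of $\mathcal{A}$ at the points of the fibres) between regularity of $Y$ and finite global dimension of $\mathcal{A}$.

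The only genuinely subtle point, and the thing I would flag explicitly, is that the two actions used on the two sides of the asserted isomorphism are a priori different: on the left $\mathrm{D}(\mathrm{Qcoh}(\mathcal{O}_X))$ acts via $\otimes^{\mathrm{L}}_{\mathcal{O}_X}$, while on the right $\mathrm{D}(\mathrm{Qcoh}(\mathcal{O}_Y))$ acts on itself via $\otimes^{\mathrm{L}}_{\mathcal{O}_Y}$, and there is no morphism of tensor triangulated categories between them inducing $\Theta$. What saves the argument is that Definition \ref{definition:1510.00211} only sees the resulting dimension filtration, and the combined input of Corollary \ref{corfinitedimpres} and Lemma \ref{lmadimfinmor} (finiteness of $\varphi$ forces $\dim_X(\mathrm{Supp}_X(M))=\dim_Y(\mathrm{Supp}_Y(\Theta M))$) says that both actions induce the very same filtration on the common category. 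So the mismatch of actions is harmless for this invariant, and no further analysis of the monoidal structure is needed.
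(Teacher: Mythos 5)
Your proposal is correct and follows essentially the same route as the paper: the paper's proof also applies $\mathrm{K}_0$ to the defining diagram on both sides and uses that $\Theta$ induces equivalences of all the filtration pieces (Corollary \ref{corthetasubcatequiv}) to identify the maps $q^{\natural}$ and $i$, then invokes Theorem \ref{thmchowrecover} for the regular case. Your explicit flagging of the mismatch of actions, resolved by the agreement of the two dimension filtrations via Corollary \ref{corfinitedimpres} and Lemma \ref{lmadimfinmor}, is exactly the point the paper's terse proof leaves implicit.
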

\begin{proof}
	There is a diagram
	\begin{equation}
    \begin{tikzcd}
	  	\Kzero\left((\derived(\Qcoh(\mathcal{A}))_{(p)})^\compact\right) \arrow[r] \arrow[dr] \arrow[d] & \Kzero\left((\derived(\Qcoh(\mathcal{A}))_{(p)}/\derived(\Qcoh(\mathcal{A}))_{(p+1)})^\compact\right)	\arrow[dr] & \\
	  	\Kzero\left((\derived(\Qcoh(\mathcal{A}))_{(p-1)})^\compact\right)  \arrow[dr] & \Kzero\left((\derived(\mathcal{O}_Y)_{(p)})^\compact\right) \arrow[r] \arrow[d]& \Kzero\left((\derived(\mathcal{O}_Y)_{(p)}/\derived(\mathcal{O}_Y)_{(p+1)})^\compact\right) \\
	  	& \Kzero\left((\derived(\mathcal{O}_Y)_{(p-1)})^\compact\right)
    \end{tikzcd}
	\end{equation}
	where all diagonal arrows are isomorphisms induced by~$\Theta$, as follows from \cref{corthetasubcatequiv}. This immediately gives the desired isomorphisms of Chow groups.
\end{proof}

\section{Relative tensor triangular Chow groups for orders}
\label{sectionorders}
In this section we study relative tensor triangular Chow groups for a special class of coherent~$\mathcal{O}_X$\dash algebras: orders. These are particularly well-behaved \emph{noncommutative} algebras, whose definition we recall in \cref{subsection:orders-preliminaries}. In their modern incarnation they were defined in~\cite{MR0117252} and the main reference is~\cite{MR0393100}. The main goal is to show that they coincide with other invariants in the literature, as is the case in the commutative setting where tensor triangular Chow groups agree with the classical Chow groups, see~\cite{MR3423452,1510.00211}.

We give some general results on cycle groups in \cref{subsection:cycle-groups}, based on \cref{theorem:cycle-groups}. We get a description of the top degree cycle groups for any order in \cref{proposition:highest-cycle-group}.

Finally we will use the structure theory for hereditary orders over discrete valuation rings to describe all cycle groups of hereditary orders and the codimension one cycle groups of tame orders, making the result in \cref{theorem:cycle-groups} concrete in a well-known example.

In \cref{subsection:chow-groups} we discuss Chow groups for orders. An easy corollary of the theory is a description of the top degree Chow group in \cref{proposition:highest-chow-group}. More importantly, we recall the definition of various class groups in the theory of orders, and we show that these classical invariants agree with the appropriate tensor triangular Chow groups.

In \cref{subsection:group-rings} we study the Chow groups of group rings over Dedekind domains, for which it is again possible to relate the tensor triangular Chow groups to classical invariants. We give some explicit examples on how one can compute them for integral group rings, using tools from algebraic number theory and representation theory.

\subsection{Preliminaries on orders}
\label{subsection:orders-preliminaries}
In this section we will introduce some basic results about orders on schemes. There are no new results here, but the literature at this level of generality is somewhat scattered.

Observe that for most of this section we will assume that we are working in a central simple algebra. This corresponds to the more geometric approach to the theory of orders. In \cref{subsection:group-rings} we will relax this condition, and consider algebras which are only separable over the generic point, as is common in representation theory and algebraic number theory. We will explain how the results of \cref{subsection:cycle-groups,subsection:chow-groups} change in this more general situation.
\begin{definition}
  \label{definition:order}
  Let~$X$ be an integral normal noetherian scheme with function field~$K$. Let~$A_K$ be a central simple~$K$\dash algebra. An \emph{$\mathcal{O}_X$\dash order}~$\mathcal{A}$ in~$A_K$ is a torsion-free coherent~$\mathcal{O}_X$\dash algebra whose generic fibre is~$A_K$.

  We say that~$\mathcal{A}$ is a \emph{maximal order} if it is not properly contained in another order.
\end{definition}

In~\cite{MR0393100} (maximal) orders are studied in both the geometric and arithmetic setting, mostly in the case of dimension~1. The behaviour of orders in higher dimension quickly becomes more and more complicated.

We will need two more classes of orders, besides just the maximal ones. Recall that Auslander--Goldman characterized maximal orders as those orders which are reflexive as~$\mathcal{O}_X$\dash modules, and for which~$\mathcal{A}_{\eta_Y}$ is a maximal order over the discrete valuation ring~$\mathcal{O}_{X,\eta_Y}$, for all~$\eta_Y$ a point of codimension~1. In dimension one there is a larger class of orders whose behaviour is as nice as that of the maximal orders, and of which maximal orders are a special instance.
\begin{definition}
  Assume that~$X$ is regular and of dimension~1. Then we say that~$\mathcal{A}$ is an \emph{hereditary order} if~$\mathcal{A}(U)$ is of global dimension~1 for every affine open~$U\subseteq X$.
\end{definition}

For hereditary (and maximal) orders in dimension~1 there exists an extensive structure theory. Inspired by the Auslander--Goldman maximality criterion we can introduce a final class of orders, for which one can bootstrap the structure theory of hereditary orders.
\begin{definition}
  We say that~$\mathcal{A}$ is a \emph{tame order} if it is reflexive as an~$\mathcal{O}_X$\dash module, and~$\mathcal{A}_{\eta_Y}$ is an hereditary order over the discrete valuation ring~$\mathcal{O}_{X,\eta_Y}$, for all~$\eta_Y$ a point of codimension~1.
\end{definition}
The notion of tame generalises hereditary orders to higher dimensions.

We now give some examples of orders for which we can describe the tensor triangular cycle and Chow groups.

\begin{example}
  The easiest examples of maximal orders are matrix algebras and their \'etale twisted forms: Azumaya algebras.
\end{example}

\begin{example}
  An example of an hereditary but non-maximal order on~$\mathbb{P}_k^1$ is
  \begin{equation}
    \mathcal{A}\coloneqq
    \begin{pmatrix}
      \mathcal{O}_{\mathbb{P}_k^1} & \mathcal{O}_{\mathbb{P}_k^1} \\
      \mathcal{O}_{\mathbb{P}_k^1}(-p) & \mathcal{O}_{\mathbb{P}_k^1}
    \end{pmatrix}
  \end{equation}
  where~$p\in\mathbb{P}_k^1$ is a closed point. The algebra structure is induced from the embedding in~$\Mat_2(\mathcal{O}_{\mathbb{P}_k^1})$.

  For each closed point~$q\neq p$ we see that~$\mathcal{A}_q$ is isomorphic to the matrix ring over~$\mathcal{O}_{\mathbb{P}_k^1,q}$, whereas for the point~$p$ we get the non-maximal order
  \begin{equation}
    \mathcal{A}_p\cong
    \begin{pmatrix}
      \mathcal{O}_{\mathbb{P}_k^1,p} & \mathcal{O}_{\mathbb{P}_k^1,p} \\
      \mathfrak{m} & \mathcal{O}_{\mathbb{P}_k^1,p}
    \end{pmatrix}.
  \end{equation}
  It is precisely this non-maximality that will contribute to the structure of the relative Chow group, see \cref{corollary:quasiprojective-curve-matrix}.
\end{example}

\subsection{Cycle groups}
\label{subsection:cycle-groups}
Using \cref{theorem:cycle-groups} we have a complete description of cycle groups of coherent~$\mathcal{O}_X$\dash algebras. In this section we discuss what happens in the special case of orders. First we observe that the top-dimensional Chow group always is of the same form.
\begin{proposition}
  \label{proposition:highest-cycle-group}
  Let~$X$ be an integral normal noetherian scheme of dimension~$n$. Let~$\mathcal{A}$ be an order on~$X$. Then
  \begin{equation}
    \ZZ_n^\Delta(X,\mathcal{A})\cong\mathbb{Z}.
  \end{equation}

  \begin{proof}
    Let~$\eta$ be the unique generic point of~$X$. Then~$\mathcal{A}_\eta$ is a central simple algebra over the function field~$\mathcal{O}_{X,\eta}$ and by Morita theory we can conclude from \cref{theorem:cycle-groups}, as there is a unique simple for a division algebra.
  \end{proof}
\end{proposition}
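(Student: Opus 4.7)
The plan is to reduce the computation to a single term via the geometry of $X$ and then identify that term explicitly. First I would exploit the hypothesis that $X$ is integral of dimension $n$ to conclude that the set $X_{(n)}$ of dimension-$n$ points consists of the unique generic point $\eta$. Applying the first formula of \cref{corsplitcycles} (which only requires $\mathcal{A}$ to be coherent, with no finite-global-dimension hypothesis) therefore yields
\begin{equation*}
  \mathrm{Z}_n^\Delta(X,\mathcal{A}) \cong \mathrm{K}_0\!\left(\mathrm{D^{perf}_{fl.}}(\mathcal{A}_\eta)\right).
\end{equation*}

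Next I would identify $\mathcal{A}_\eta$. By \cref{definition:order} the generic fibre of the order $\mathcal{A}$ is the central simple $K$-algebra $A_K$, where $K$ is the function field of $X$. Being a central simple algebra over a field, $A_K$ is semisimple and in particular of global dimension zero. Consequently the argument in the second half of \cref{corsplitcycles} can be carried out locally at $\eta$ even though $\mathcal{A}$ itself is not assumed to have finite global dimension globally, and it yields $\mathrm{K}_0(\mathrm{D^{perf}_{fl.}}(A_K)) \cong \mathrm{K}_0(A_K\text{-fl.})$.

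Finally I would invoke Artin–Wedderburn (equivalently, Morita theory, as in the author's hint): one has $A_K \cong \Mat_m(D)$ for some division $K$-algebra $D$, and the category $A_K\text{-fl.}$ is Morita equivalent to $D\text{-fl.}$ Since $D$ admits, up to isomorphism, a unique simple right module (namely $D$ itself), an induction on composition length — exactly as in the proof of \cref{theorem:cycle-groups} — gives $\mathrm{K}_0(A_K\text{-fl.}) \cong \mathbb{Z}$, generated by the class of any simple $A_K$-module. There is no real obstacle in this argument; the only subtle point worth spelling out is that the finite-global-dimension hypothesis needed to pass from the perfect-complex formulation to the abelian one in \cref{corsplitcycles} is automatic at the generic point, because the generic fibre of any order is a semisimple algebra.
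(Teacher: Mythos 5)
Your proof is correct and follows essentially the same route as the paper's: reduce to the unique generic point via the splitting of the cycle group over $X_{(n)}$, then use Artin--Wedderburn/Morita theory to see that the central simple generic fibre has a unique simple module. In fact your version is slightly more careful than the paper's, which cites \cref{theorem:cycle-groups} directly even though that theorem assumes $\mathcal{A}$ has finite global dimension; your observation that semisimplicity of $\mathcal{A}_\eta$ makes the relevant hypothesis automatic at the generic point closes that small gap.
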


There are several issues in computing the cycle and Chow groups for orders in other degrees:
\begin{enumerate}
  \item there is no general structure theory for (maximal) orders on local rings in arbitrary dimension;
  \item even if there is such a description (as will be the case in dimension~1) the non-splitness of the central simple algebra over the generic point will play an important role, because the higher K-theory of central simple algebras (let alone orders) is different in general from the K-theory of the center.
\end{enumerate}

Nevertheless, in the one-dimensional case we can obtain an explicit description.

First we consider the complete local case, for which there exists an explicit description of hereditary orders~\cite[\S39]{MR0393100}. In this affine situation we will use ring-theoretical notation from op.~cit. In particular, we consider a (complete) discrete valuation ring~$(R,\mathfrak{m})$ whose field of fractions is denoted~$K$, and an hereditary~$R$\dash order~$\Lambda$ in a central simple~$K$\dash algebra~$A\cong\Mat_n(D)$, where~$D$ is a division algebra over~$K$. Then there exists a unique maximal~$R$\dash order~$\Delta$ in~$D$, and we have a block decomposition
\begin{equation}
  \Lambda=
  \begin{pmatrix}
    \Delta & \rad\Delta & \rad\Delta & \ldots & \rad\Delta \\
    \Delta & \Delta     & \rad\Delta & \ldots & \rad\Delta \\
    \Delta & \Delta     & \Delta     & \ldots & \rad\Delta \\
    \ldots &            &            &        & \ldots \\
    \Delta & \Delta     & \Delta     & \ldots & \Delta \\
  \end{pmatrix}^{n_1,\ldots,n_r}
\end{equation}
where the block decomposition is given by putting~$\Mat_{n_i\times n_j}(\Delta)$ (resp.~$\Mat_{n_i\times n_j}(\rad\Delta)$) if~$i\geq j$ (resp.~$i<j$). In particular,~$\sum_{i=1}^rn_i=n$.
\begin{definition}
  The number of blocks~$r$ in the block decomposition is the \emph{type} of~$\Lambda$.
\end{definition}

The following result can be proved along the same lines as \cref{theorem:quasiprojective-curve}, but we give an alternative proof here using d\'evissage in algebraic K-theory~\cite[\S5]{MR0338129}.
\begin{proposition}
  \label{proposition:cDVR-type}
  Let~$R$ be a complete discrete valuation ring, with fraction field~$K$ and residue field~$k$. Let~$\Lambda$ be an hereditary~$R$\dash order in the central simple~$K$\dash algebra~$A$. Then
  \begin{equation}
    \ZZ_0^\Delta(R,\Lambda)\cong\mathbb{Z}^r
  \end{equation}
  where~$r$ is the type of~$\Lambda$.

  \begin{proof}
    By d\'evissage for algebraic K-theory and the invariance of K-theory under nilpotent thickenings applied to~\cite[corollary~39.18(iii)]{MR0393100} we have that % \cite[corollary~II.6.3.1]{MR3076731}
    \begin{equation}
      \Kzero(\fl\Lambda)\cong\Kzero(\Lambda/\rad\Lambda).
    \end{equation}
    By~\cite[(39.17)]{MR0393100} we have
    \begin{equation}
      \Kzero(\Lambda/\rad\Lambda)\cong\bigoplus_{i=1}^r\Kzero(\Mat_{n_i}(\Delta/\rad\Delta))\cong\mathbb{Z}^{\oplus r}
    \end{equation}
    where~$\Delta/\rad\Delta$ is a skew field over~$k$ by \cite[corollary~17.5]{MR0393100}.

    Similarly one can by d\'evissage appeal to~\cite[corollary~39.18(v)]{MR0393100} for the conclusion.
    % see corollary 5.1 in Quillen, or the proof of theorem 7.5.4
  \end{proof}
\end{proposition}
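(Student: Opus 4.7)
The plan is to apply \cref{theorem:cycle-groups} and then identify the number of simple right~$\Lambda$\dash modules with the type of~$\Lambda$. Since~$\Lambda$ is hereditary, it has global dimension at most one, so the finite-global-dimension hypothesis of \cref{theorem:cycle-groups} is satisfied and the theorem yields
\[
\ZZ_0^\Delta(R,\Lambda) \cong \bigoplus_{x\in(\Spec R)_{(0)}} \mathbb{Z}^{r_x}.
\]
As~$\Spec R$ is a one-dimensional local scheme, its unique zero-dimensional point is the maximal ideal~$\mathfrak{m}$, with~$\Lambda_{\mathfrak{m}}=\Lambda$. The sum therefore collapses to a single summand~$\mathbb{Z}^{r_{\mathfrak{m}}}$, where~$r_{\mathfrak{m}}$ counts the isomorphism classes of simple right~$\Lambda$\dash modules.

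To compute~$r_{\mathfrak{m}}$, I would first observe that since the Jacobson radical annihilates every simple module, simple right~$\Lambda$\dash modules correspond bijectively to simple right modules over the semisimple quotient~$\Lambda/\rad\Lambda$. The structure theorem for hereditary orders over complete discrete valuation rings from~\cite[\S39]{MR0393100}, recalled just before the statement, provides the explicit block description of~$\Lambda$ and in particular identifies
\[
\Lambda/\rad\Lambda \cong \prod_{i=1}^r \Mat_{n_i}(\Delta/\rad\Delta),
\]
with~$\Delta/\rad\Delta$ a skew field over~$k$. Each factor is simple Artinian with a unique simple module up to isomorphism, so there are exactly~$r$ simple right~$\Lambda/\rad\Lambda$\dash modules; hence~$r_{\mathfrak{m}}=r$, which is the desired conclusion.

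The key point, and the only real subtlety, lies in invoking the structure theorem in the correct form. The completeness hypothesis on~$R$ is essential here: it guarantees the existence and uniqueness of a maximal order~$\Delta$ in the division algebra~$D$ underlying~$A\cong\Mat_n(D)$, and hence the block-matrix description of~$\Lambda$ whose number of blocks is by definition the type. Once this structural input is in place, the computation is purely combinatorial. Alternatively, one could bypass \cref{theorem:cycle-groups} and argue directly via d\'evissage (as hinted at in the author's sketch), replacing the last step by the chain~$\Knought(\Lambda\text{-fl.})\cong\Knought(\Lambda/\rad\Lambda\text{-mod})\cong\mathbb{Z}^r$, but either route relies on the same structural input.
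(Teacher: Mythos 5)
Your proof is correct, and it takes a mildly different route from the one in the paper. You invoke \cref{theorem:cycle-groups} (legitimately: an hereditary order has global dimension at most one and is coherent, and $\Spec R$ has the single dimension-zero point $\mathfrak{m}$ with $\Lambda_{\mathfrak{m}}=\Lambda$), reducing the problem to counting simple right $\Lambda$-modules, which you then do via $\Lambda/\rad\Lambda\cong\prod_{i=1}^r\Mat_{n_i}(\Delta/\rad\Delta)$. The paper instead computes $\Knought(\fl\Lambda)$ directly by d\'evissage and nilpotent invariance, obtaining $\Knought(\fl\Lambda)\cong\Knought(\Lambda/\rad\Lambda)$, and then applies the same identity (39.17) of Reiner. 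The two arguments share all the real content --- the identification of the cycle group with $\Knought$ of finite-length modules, reduction modulo the radical, and the block decomposition of an hereditary order over a \emph{complete} discrete valuation ring (you are right that completeness is what licenses this structure theory) --- and indeed the proof of \cref{theorem:cycle-groups} is itself a $\Knought$-level d\'evissage by induction on composition series, so your route is essentially a repackaging. What your version buys is self-containedness within the paper; what the paper's version buys is that the d\'evissage isomorphism holds in all K-theoretic degrees, which matters later when the localization (Bass--Tate) sequence involving $\Kone$ is used. Your closing remark correctly identifies the paper's actual argument as the alternative.
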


\subsection{Chow groups in the regular case}
\label{subsection:chow-groups}
In this section we prove the main results for orders: \cref{corollary:reduced-projective-class-group-affine} shows that for an hereditary order over a Dedekind domain the 0th relative Chow group agrees with the reduced projective class group, and if the order is moreover maximal it agrees with the ideal class group. These are classical invariants that will be introduced below. In the setting of a quasiprojective curve over a field we get the analogous result in \cref{corollary:reduced-projective-class-group-projective}, from which we obtain \cref{theorem:quasiprojective-curve}.

As an immediate corollary to \cref{proposition:highest-cycle-group} and the description of the rational equivalence we have the following general result.
\begin{proposition}
  \label{proposition:highest-chow-group}
  With notation and assumptions as in \cref{proposition:highest-cycle-group} we have that
  \begin{equation}
    \CH_n^\Delta(X,\mathcal{A})\cong\mathbb{Z}.
  \end{equation}

  \begin{proof}
    We have that~$q^\natural(\ker(i))$ from \eqref{eqncommdiagsubcats} is zero because~$i$ is an isomorphism if~$p\geq n$.
  \end{proof}
\end{proposition}
A similar proof of course works for every coherent~$\mathcal{O}_X$\dash algebra, where the cycle group is given by the Grothendieck group of a certain finite-dimensional algebra over the function field, in particular it is easy to construct examples for which
\begin{equation}
  \CH_n^\Delta(X,\mathcal{A})\neq\mathbb{Z},
\end{equation}
e.g.\ by taking~$\mathcal{A}=\mathcal{O}_X\oplus\mathcal{O}_X$.

\paragraph{Classical invariants}
In the~1\dash dimensional case the only other tensor triangular Chow group we need to describe is~$\CH_0^\Delta$, see \cref{proposition:vanishing-outside-range}. We will do this using \cref{propchowexseq}, which allows us to interpret the tensor triangular Chow groups in terms of classical invariants such as the ideal class group and the reduced projective class group, whose definitions we now recall in the affine setting.

Let~$R$ be a Dedekind domain, and denote its quotient field by~$K$. Let~$\Lambda$ be an~$R$\dash order in a central simple~$K$\dash algebra~$A$. Let~$M,N$ be left~$\Lambda$\dash modules. We say that they are \emph{stably isomorphic} if there exists an integer~$r$ and an isomorphism~$M\oplus\Lambda^{\oplus r}\cong N\oplus\Lambda^{\oplus r}$.
\begin{definition}
  The \emph{ideal class group} (or \emph{stable class group})~$\Cl\Lambda$ of~$\Lambda$ consists of the stable isomorphism classes of left~$\Lambda$\dash ideals (i.e.~those submodules~$I$ such that~$KI=A$), where the group structure is defined in~\cite[theorem~35.5]{MR0393100}.
\end{definition}
It is a one-sided generalisation of the usual class group (or Picard group). There also exists a two-sided version, which is different in general, see \cref{remark:one-vs-two-sided}. Because we are only considering the module structure on one side, it is the former and not the latter that is important to us.

In this case the localisation sequence that is used to define rational equivalence in the zeroth Chow group as in \eqref{equation:chow-as-cokernel} is also known as the \emph{Bass--Tate sequence}~\cite{MR712062,MR925271}. We will now recall the description from~\cite[\S2]{MR0404410}. In the relevant degrees the localization sequence takes on the form
\begin{equation}
  \label{equation:bass-tate}
  \Kone(\Lambda)\to\Kone(A)\to\Kzero(\fl\Lambda)\to\Kzero(\Lambda)\to\Kzero(A)\to 0.
\end{equation}
We can also apply d\'evissage to the term~$\Kzero(\fl\Lambda)$, and obtain
\begin{equation}
  \Kzero(\fl\Lambda)\cong\bigoplus_{\mathfrak{p}\in\Spec R\setminus\{0\}}\Kzero(\fl\Lambda_{\mathfrak{p}}).
\end{equation}

\begin{definition}
  The \emph{reduced projective class group}~$\widetilde{\Kzero}(\Lambda)$ of~$\Lambda$ is the kernel of the morphism~$\Kzero(\Lambda)\twoheadrightarrow\Kzero(A)$ in \eqref{equation:bass-tate}.
\end{definition}
In some texts the reduced projective class group is also denoted~$\mathrm{SK}_0$.

Observe that the reduced projective class group is the kernel of a \emph{split} epimorphism, because~$\Kzero(A)\cong\mathbb{Z}$ is projective. So to compute the reduced projective class group it suffices to compute~$\Kzero(\Lambda)$.

The connection between these two types of class groups is given by~\cite[theorem~36.3]{MR0393100} and~\cite[(2.9)]{MR0404410}. The first result says that for a maximal order we have that
\begin{equation}
  \Cl\Lambda\cong\widetilde{\Kzero}(\Lambda),
\end{equation}
whilst the latter describes the ideal class group in general as a \emph{subgroup} of the reduced projective class group via the short exact sequence
\begin{equation}
  \label{equation:SES-Cl-K0red}
  0\to\Cl\Lambda\to\widetilde{\Kzero}(\Lambda)\overset{\lambda_0}{\to}\bigoplus_{\mathfrak{p}\in\Spec R\setminus\{(0)\}}\widetilde{\Kzero}(\Lambda_{\mathfrak{p}})\to 0
\end{equation}
In particular, if~$\Lambda$ is maximal, then~$\lambda_0$ is the zero map: by~\cite[theorem~18.7]{MR0393100} we have indeed that~$\Cl\Lambda_{\mathfrak{p}}=\widetilde{\Kzero}(\Lambda_{\mathfrak{p}})$ is zero.

Moreover, we know by Jacobinski that~$\Cl\Lambda\cong\Cl\Lambda'$, for~$\Lambda\subseteq\Lambda'$ an inclusion of \emph{hereditary} orders~\cite[theorem~40.16]{MR0393100}. In particular it suffices to compute the ideal class group of a maximal order containing~$\Lambda$, provided one starts with an hereditary order.

\begin{remark}
  It is possible to reprove Jacobinski's result using \eqref{equation:SES-Cl-K0red} and the results used in the proof of \cref{proposition:correct-reiten-vandenbergh}: if~$\Lambda$ is an hereditary order, then~$\Kzero(\Lambda_{\mathfrak{p}})\cong\mathbb{Z}^{\oplus r-1}$ for~$\mathfrak{p}$ a maximal ideal of~$R$, where~$r$ is the type of~$\Lambda_{\mathfrak{p}}$, because the last terms of \eqref{equation:bass-tate} reduce to the \emph{split} short exact sequence
  \begin{equation}
    0\to\mathbb{Z}^{\oplus r-1}\to\mathbb{Z}^{\oplus r}\to\mathbb{Z}\to 0.
  \end{equation}
\end{remark}

As an immediate corollary to \cref{propchowexseq} we have the following main result. In particular, by the above discussion we obtain an explicit description of the relative tensor triangular Chow groups in the case of an order~$\Lambda$ over a Dedekind domain~$R$.
\begin{corollary}
  \label{corollary:reduced-projective-class-group-affine}
  We have that
  \begin{equation}
    \CH_0^\Delta(R,\Lambda)\cong\widetilde{\Kzero}(\Lambda).
  \end{equation}
  If~$\Lambda$ is moreover hereditary, then
  \begin{equation}
    \CH_0^\Delta(R,\Lambda)\cong\widetilde{\Kzero}(\Lambda)\cong\Cl\Lambda'\oplus\mathbb{Z}^{r-1}
  \end{equation}
  where~$\Lambda'$ is a maximal order containing~$\Lambda$ and~$r$ is the maximal length of a chain of inclusions of orders.
\end{corollary}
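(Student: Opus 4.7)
The strategy is to apply \cref{propchowexseq} in the case $p=0$ and identify the terms of the resulting short exact sequence with the first few terms of the Bass--Tate sequence \eqref{equation:bass-tate}. Since $\Spec R$ is one-dimensional we have $\mathcal{K}_{(1)}=\mathcal{K}=\mathrm{D}(\Qcoh\Lambda)$ and $\mathcal{K}_{(-1)}=0$, so the middle term of the exact sequence in \cref{propchowexseq} becomes
\[K_0(\mathcal{K}^c)=K_0(\mathrm{D^{perf}}(\Lambda))\cong \Knought(\Lambda),\]
where the last identification uses that compact objects agree with perfect complexes (\cref{thmcompgen}) and that $\Knought$ of perfect complexes coincides with $\Knought$ of finitely generated projectives. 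Likewise, by \cref{proposition:highest-cycle-group} the right-hand term $\mathrm{Z}_1^\Delta(R,\Lambda)$ is the infinite cyclic group $\Knought(A)$: indeed, by \eqref{eqnabstractsplitting} and \cref{lmasplitcomp} applied at the generic point~$\eta$, together with \cref{propstalkquot}, we have $(\Gamma_\eta\mathcal{K})^c\simeq\mathrm{D^{perf}}(A)$.

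The next step is to identify the map $\pi\colon\Knought(\Lambda)\to\Knought(A)$ appearing in \cref{propchowexseq} with the usual generic-fibre map $[P]\mapsto[P\otimes_RK]$ from the Bass--Tate sequence. This follows because $\pi$ is induced by the Verdier quotient $\mathcal{K}\to\mathcal{K}/\mathcal{K}_{(0)}$, which under the equivalence $\mathcal{K}/\mathcal{K}_{(0)}\simeq\mathrm{D}(\mathrm{Mod}(A))$ of \cref{propstalkquot} (with $x=\eta$, so $Y_\eta$ is the set of closed points and $\mathrm{D}_{Y_\eta}(\Lambda)=\mathcal{K}_{(0)}$) is nothing but the derived pullback $(i_\eta)^*=-\otimes_RK$. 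Hence \cref{propchowexseq} produces the left-exact sequence
\[0\to\CH_0^\Delta(R,\Lambda)\to\Knought(\Lambda)\to\Knought(A),\]
and the kernel on the left is $\widetilde{\Knought}(\Lambda)$ by definition, establishing the first isomorphism.

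For the second half, assume $\Lambda$ is hereditary. We combine the short exact sequence
\[0\to\Cl\Lambda\to\widetilde{\Knought}(\Lambda)\to\bigoplus_{\mathfrak{p}}\widetilde{\Knought}(\Lambda_\mathfrak{p})\to 0\]
from \eqref{equation:SES-Cl-K0red} with the local computation recorded in the remark following \eqref{equation:SES-Cl-K0red}: for hereditary $\Lambda_\mathfrak{p}$ of type $r_\mathfrak{p}$ we have $\widetilde{\Knought}(\Lambda_\mathfrak{p})\cong\mathbb{Z}^{r_\mathfrak{p}-1}$, and this vanishes when $\Lambda_\mathfrak{p}$ is maximal. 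Only finitely many primes contribute, so the direct sum is free abelian of total rank $r-1$ (in the notation of the statement), and the sequence therefore splits. Finally, Jacobinski's theorem \cite[theorem~40.16]{MR0393100} identifies $\Cl\Lambda$ with $\Cl\Lambda'$ for any maximal order $\Lambda'\supseteq\Lambda$, yielding the desired decomposition.

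The main obstacle is the identification of the map $\pi$ in \cref{propchowexseq} with the classical Bass--Tate localisation map; everything else is a straightforward reading off of the cited structural results. Care is needed because the Verdier quotient lives inside a (large) triangulated category, whereas the Bass--Tate map is usually phrased on the level of abelian categories of modules. \cref{propstalkquot}, together with the compatibility of compact objects with Verdier quotients in compactly generated settings, is what bridges the two.
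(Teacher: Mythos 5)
Your proposal is correct and follows the same core route as the paper: apply \cref{propchowexseq} with $p=0$, identify the resulting left-exact sequence with the start of the Bass--Tate sequence \eqref{equation:bass-tate} so that $\CH_0^\Delta(R,\Lambda)=\ker\left(\Knought(\Lambda)\to\Knought(A)\right)=\widetilde{\Knought}(\Lambda)$, and then combine \eqref{equation:SES-Cl-K0red}, the local computation $\widetilde{\Knought}(\Lambda_{\mathfrak{p}})\cong\mathbb{Z}^{\oplus r_{\mathfrak{p}}-1}$, and Jacobinski's theorem for the hereditary case. The one place where you genuinely diverge is in how the middle term and the map $\pi$ are identified: the paper routes this through the abelian-level filtration of \cref{subsection:main-result} (the Serre quotients $\mathrm{Coh}_{\leq i}/\mathrm{Coh}_{\leq i-1}$ and \eqref{equation:chow-as-cokernel}), which is stated there under a finite-global-dimension hypothesis, whereas you identify $\Knought(\mathcal{K}^c)\cong\Knought(\Lambda)$ directly from \cref{thmcompgen} and recognise $\pi$ as the generic-fibre functor via \cref{propstalkquot} at $\eta$. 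This is a small but real improvement in rigour: a general $R$-order need not have finite global dimension, and your argument covers the first isomorphism without that assumption, which is exactly what is needed later for group rings. One cosmetic point: your rank count gives $\sum_{\mathfrak{p}}(r_{\mathfrak{p}}-1)$, which is the quantity the corollary denotes $r-1$; this matches the intended reading (compare \cref{proposition:length-of-chain}), but it is worth flagging that the statement's phrase ``maximal length of a chain of inclusions of orders'' is off by the same bookkeeping everywhere and your interpretation is the consistent one.
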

In \cref{subsection:group-rings} we will encounter another situation in which we can express the relative tensor triangular Chow groups in terms of class groups of orders, but there the behaviour with respect to inclusions in maximal orders is different.

\begin{remark}
  \label{remark:one-vs-two-sided}
  In~\cite[theorem~40.9]{MR0393100} a description of the (two-sided) Picard group is given. It combines information about the local type (see \cref{proposition:cDVR-type}) and the ramification. This differs from the tensor triangular Chow groups, for which the local type shows up as copies of~$\mathbb{Z}$, not in the form of torsion quotients.
\end{remark}

\paragraph{Hereditary orders on curves}
Up to now we only looked at hereditary orders on Dedekind domains. In~\cite{MR1825805,MR3597149} the case of hereditary orders on smooth (quasi)projective curves over a field~$k$ is studied, mostly from a representation theory point of view.

Let~$C$ be an irreducible quasiprojective curve over~$\Spec k$. Let~$\mathcal{A}$ be an hereditary order in the central simple~$k(C)$\dash algebra~$A$. In this situation \cref{corollary:reduced-projective-class-group-affine} becomes the following statement.

\begin{corollary}
  \label{corollary:reduced-projective-class-group-projective}
  We have that
  \begin{equation}
    \CH_0^\Delta(C,\mathcal{A})\cong\ker\left( \Kzero(\mathcal{A})\twoheadrightarrow\Kzero(A)\cong\mathbb{Z} \right).
  \end{equation}
\end{corollary}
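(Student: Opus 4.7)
The plan is to mimic the affine argument behind \cref{corollary:reduced-projective-class-group-affine}, applying the cokernel description \eqref{equation:chow-as-cokernel} of the zeroth tensor triangular Chow group to~$X=C$ and~$\mathcal{A}$. For this to apply I first need to check that~$\mathcal{A}$ has finite global dimension, which holds because~$\mathcal{A}$ is hereditary on a smooth curve (locally of global dimension one), so~$\mathrm{D^{perf}}(\mathcal{A})=\mathrm{D^b}(\mathrm{Coh}(\mathcal{A}))$ and~$\Knought(\mathcal{A})=\Knought(\mathrm{Coh}(\mathcal{A}))$. Taking~$i=0$ in \eqref{equation:chow-as-cokernel}, noting that~$\mathrm{Coh}_{\leq -1}(\mathcal{A})=0$ and~$\mathrm{Coh}_{\leq 1}(\mathcal{A})=\mathrm{Coh}(\mathcal{A})$, yields
\[
  \CH_0^\Delta(C,\mathcal{A})\cong\mathrm{coker}\bigl(\Kone(\mathrm{Coh}(\mathcal{A})/\mathrm{Coh}_{\leq 0}(\mathcal{A}))\to\Knought(\mathrm{Coh}_{\leq 0}(\mathcal{A}))\bigr).
\]

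Next, I would identify the Serre quotient~$\mathrm{Coh}(\mathcal{A})/\mathrm{Coh}_{\leq 0}(\mathcal{A})$ with the category~$\mathrm{mod}(A)$ of finite\dash dimensional modules over the generic fibre~$A=\mathcal{A}_\eta$. Concretely, localisation at the (unique) generic point~$\eta$ sends a coherent~$\mathcal{A}$\dash module to its stalk~$\mathcal{A}_\eta$\dash module, the kernel of this exact functor is exactly the~$\mathrm{Coh}_{\leq 0}(\mathcal{A})$\dash objects (finite-length modules), and essential surjectivity is immediate since any~$A$\dash module of finite~$K$\dash dimension is the generic fibre of a coherent~$\mathcal{A}$\dash submodule of the constant sheaf. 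This is a standard Serre localization argument on a one-dimensional noetherian scheme.

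Plugging this into Quillen's localization long exact sequence for the Serre pair~$\mathrm{Coh}_{\leq 0}(\mathcal{A})\hookrightarrow\mathrm{Coh}(\mathcal{A})\twoheadrightarrow\mathrm{mod}(A)$ produces the exact sequence
\[
  \Kone(\mathcal{A})\to\Kone(A)\to\Knought(\mathrm{Coh}_{\leq 0}(\mathcal{A}))\to\Knought(\mathcal{A})\to\Knought(A)\to 0,
\]
the projective analogue of the Bass--Tate sequence. Comparing with the previous display immediately shows
\[
  \CH_0^\Delta(C,\mathcal{A})\cong\ker\bigl(\Knought(\mathcal{A})\twoheadrightarrow\Knought(A)\bigr),
\]
and~$\Knought(A)\cong\mathbb{Z}$ by Morita invariance applied to the central simple algebra~$A$.

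The main obstacle is making sure the appeal to \eqref{equation:chow-as-cokernel} is justified in this non\dash affine setting, i.e.\ confirming finite global dimension of~$\mathcal{A}$ and the consequent identification~$\Knought(\mathrm{Coh}(\mathcal{A}))=\Knought(\mathcal{A})$; after that, the identification of the Serre quotient with~$\mathrm{mod}(A)$ and the invocation of the localization sequence are formal. The surjectivity onto~$\Knought(A)$ needed to write the Chow group as a kernel (rather than a cokernel into~$\Knought(\mathcal{A})/\mathrm{image}$) is automatic from the exactness of the localization sequence on the right.
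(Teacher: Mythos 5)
Your proposal is correct and follows essentially the same route as the paper: it specializes the cokernel description \eqref{equation:chow-as-cokernel} (valid here since an hereditary order on a smooth curve has finite global dimension) to $i=0$, identifies the Serre quotient $\mathrm{Coh}(\mathcal{A})/\mathrm{Coh}_{\leq 0}(\mathcal{A})$ with $\mathrm{mod}(A)$ at the generic point, and reads off the kernel from the right end of the K-theoretic localization sequence, exactly as in the affine case of \cref{corollary:reduced-projective-class-group-affine}. The verification that $\Knought(A)\cong\mathbb{Z}$ via Morita theory and the remark that surjectivity onto $\Knought(A)$ is automatic are both as intended.
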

One can use the results of~\cite{MR1825805} to compute Grothendieck groups of hereditary orders in this setting. The results in this paper are stated only for~$k$ algebraically closed. In this case we have by Tsen's theorem that~$\Br(k)=\Br(k(C))=0$, which means that the central simple~$k(C)$\dash algebra~$A$ is always of the form~$\Mat_n(k(C))$, i.e.~it is unramified.

If~$k$ is not algebraically closed, then one should change the definition of~$r$ in~\cite[proposition~2.1]{MR1825805}: it should only incorporate the local types of the hereditary order, not the ramification of a maximal order containing it. The reason why the definition using ramification works in the algebraically closed case is because every central simple~$k(C)$\dash algebra is automatically unramified, and so is every maximal order. But if~$\Br(k(C))\neq 0$ there are ramified maximal orders.

The correct definition should only account for the length of a chain of orders containing~$\mathcal{A}$ and terminating in a maximal order~$\overline{\mathcal{A}}$. If~$\mathcal{A}$ is itself already maximal we will say that this length is~0.
\begin{proposition}
  \label{proposition:length-of-chain}
  Let~$\mathcal{A}$ be a sheaf of hereditary~$\mathcal{O}_C$\dash orders. Let~$r_p$ be the type of the hereditary~$\mathcal{O}_{C,p}$\dash order~$\mathcal{A}_p$. Then the maximal length of a chain of orders containing~$\mathcal{A}$ is independent of the maximal order in which it terminates and is equal to
  \begin{equation}
    \sum_{p\in C}(r_p-1).
  \end{equation}

  \begin{proof}
    This follows from the proof of~\cite[theorem~40.8]{MR0393100}.
  \end{proof}
\end{proposition}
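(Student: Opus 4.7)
The plan is to reduce the global statement to a local one at each closed point of $C$, using the fact that an hereditary $\mathcal{O}_C$-order is locally at each stalk an hereditary $\mathcal{O}_{C,p}$-order in the discrete valuation ring setting, where Reiner's structure theory (cf.\ \cite[\S39]{MR0393100}) applies. First I would observe that a chain $\mathcal{A}=\mathcal{A}^{(0)}\subsetneq\mathcal{A}^{(1)}\subsetneq\cdots\subsetneq\mathcal{A}^{(N)}=\overline{\mathcal{A}}$ of hereditary $\mathcal{O}_C$-orders is determined by, and equivalent to, its system of stalks, so that at each point $p\in C$ one obtains a (possibly non-strict) chain of local hereditary orders $\mathcal{A}_p\subseteq\mathcal{A}_p^{(1)}\subseteq\cdots\subseteq\overline{\mathcal{A}}_p$. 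Because strict inclusion globally is detected by strict inclusion at at least one stalk, the global length is bounded by the sum of the local lengths; conversely, strict inclusions can be realised one point at a time, so we are reduced to computing the maximal length of a chain of hereditary orders above $\mathcal{A}_p$ terminating in a maximal order.

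Next I would invoke the local structure theorem: after completion, every hereditary order over a complete DVR $R$ is, up to conjugation inside the central simple algebra, of the standard block form with multiplicities $(n_1,\ldots,n_{r_p})$ described just before \cref{proposition:cDVR-type}, and the hereditary orders containing it correspond bijectively to coarsenings of this ordered partition obtained by merging consecutive blocks. A maximal order corresponds to a single block (type~$1$), so the longest chain from type $r_p$ to type $1$ has length exactly $r_p-1$, and this length is independent of the particular maximal order reached (all of them have type $1$). Passing from the completion back to the henselian or ordinary local ring is harmless because the lattice of hereditary orders is unchanged under $\mathfrak{m}$-adic completion by \cite[theorem~39.14]{MR0393100}.

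Summing over points, since $r_p=1$ for all but finitely many $p$ (the locus where $\mathcal{A}$ fails to be locally maximal is a proper closed subscheme of the curve, hence finite), I get a well-defined total
\[\sum_{p\in C}(r_p-1),\]
and by the first paragraph this is both an upper bound and (by merging blocks one at a time, at one point at a time) achievable. The independence from the terminal maximal order $\overline{\mathcal{A}}$ is then automatic from the independence at each stalk.

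The main obstacle I anticipate is the bookkeeping that guarantees the local-to-global reduction is clean: namely, that one can perform the merges independently at different points without accidentally enlarging the order at another point, and that the containment relations among orders really do glue from their stalks. Both follow from the fact that hereditary orders are reflexive and hence equal to the intersection of their localisations at points of codimension~$1$, so I would spell out these two points and then cite \cite[theorem~40.8]{MR0393100} for the remaining combinatorial count.
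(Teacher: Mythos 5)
Your argument is correct and is essentially an unpacking of exactly what the paper's one-line proof points to: the proof of \cite[theorem~40.8]{MR0393100} proceeds by the same local--global reduction (orders over a curve are determined by, and can be modified independently at, their stalks at the finitely many points where they fail to be maximal) combined with the complete-local block-structure count showing the longest chain above a type-$r_p$ hereditary order has length $r_p-1$ regardless of which of its $r_p$ maximal overorders one terminates in. The only detail worth adding is that every order containing a hereditary order is itself hereditary, so restricting attention to chains of \emph{hereditary} orders, as you implicitly do, loses nothing.
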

We can now formulate~\cite[proposition~2.1]{MR1825805} in such a way that it is also valid over non-algebraically closed fields. By the discussion above the formulation of~loc.~cit.~can be misinterpreted if one does not assume throughout that~$k$ is algebraically closed.
\begin{proposition}
  \label{proposition:correct-reiten-vandenbergh}
  Let~$\mathcal{A}$ be a sheaf of hereditary~$\mathcal{O}_C$\dash orders in a central simple~$k(C)$\dash algebra~$A$. Let~$\overline{\mathcal{A}}$ be a maximal order containing~$\mathcal{A}$. Then
  \begin{equation}
    \Kzero(\mathcal{A})\cong\Kzero(\overline{\mathcal{A}})\oplus\mathbb{Z}^{\oplus\rho}
  \end{equation}
  where~$\rho\coloneqq\sum_{p\in C_{(0)}}(r_p-1)$.

  \begin{proof}
    This follows from \cref{proposition:length-of-chain} and~\cite[theorem~1.14]{MR978602}.
  \end{proof}
\end{proposition}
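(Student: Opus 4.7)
The plan is to induct on the total length of a maximal chain of hereditary orders between~$\mathcal{A}$ and~$\overline{\mathcal{A}}$, which by \cref{proposition:length-of-chain} equals~$\rho=\sum_{p\in C_{(0)}}(r_p-1)$ and in particular is finite (as~$\mathcal{A}_p=\overline{\mathcal{A}}_p$ and hence~$r_p=1$ for all but finitely many points). The base case~$\rho=0$ is immediate because then~$\mathcal{A}=\overline{\mathcal{A}}$.

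For the induction step, suppose~$\mathcal{A}\subsetneq\overline{\mathcal{A}}$; then there is a point~$p\in C_{(0)}$ with~$r_p\geq 2$, and by the structure theory of hereditary orders over a discrete valuation ring (as used in the proof of \cref{proposition:length-of-chain}, i.e.~\cite[theorem~40.8]{MR0393100}) we may find a sheaf of hereditary~$\mathcal{O}_C$\dash orders~$\mathcal{A}'$ with~$\mathcal{A}\subsetneq\mathcal{A}'\subseteq\overline{\mathcal{A}}$, which differs from~$\mathcal{A}$ only at~$p$ and such that~$\mathcal{A}'_p$ has type~$r_p-1$ (concretely, amalgamate two neighbouring blocks in the standard block form). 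Then \cite[theorem~1.14]{MR978602} applied to the one-step inclusion~$\mathcal{A}\subset\mathcal{A}'$ yields a canonical splitting
\begin{equation}
  \Knought(\mathcal{A})\cong\Knought(\mathcal{A}')\oplus\mathbb{Z},
\end{equation}
where the extra~$\mathbb{Z}$ is generated by the class of the simple~$\mathcal{A}$\dash module that becomes decomposable after passing to~$\mathcal{A}'$. Since the chain from~$\mathcal{A}'$ to~$\overline{\mathcal{A}}$ has length~$\rho-1$ (again by \cref{proposition:length-of-chain}, as only~$r_p$ has dropped by one), the induction hypothesis gives~$\Knought(\mathcal{A}')\cong\Knought(\overline{\mathcal{A}})\oplus\mathbb{Z}^{\oplus\rho-1}$ and combining these two splittings yields the desired description of~$\Knought(\mathcal{A})$.

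The main technical issue will be to verify that \cite[theorem~1.14]{MR978602}, which is originally formulated over a base that is a complete discrete valuation ring or an algebraically closed field, applies in the global setting of a quasiprojective curve over an arbitrary field~$k$. This should be handled by reducing to the local picture: the inclusion~$\mathcal{A}\subset\mathcal{A}'$ is an isomorphism away from~$p$, so the exact sequence~$0\to\mathcal{A}\to\mathcal{A}'\to\mathcal{A}'/\mathcal{A}\to 0$ has quotient supported at~$p$, and a localisation/d\'evissage argument reduces the comparison of~$\Knought(\mathcal{A})$ and~$\Knought(\mathcal{A}')$ to a statement about the categories of finite length modules over~$\mathcal{A}_p$ and~$\mathcal{A}'_p$, where~\cite[theorem~1.14]{MR978602} applies directly. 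Once this local-to-global step is justified, the rest is a formal induction.
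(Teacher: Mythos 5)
Your proposal uses exactly the same two ingredients as the paper's (one-line) proof --- \cref{proposition:length-of-chain} together with \cite[theorem~1.14]{MR978602} --- merely organized as an induction on the length of the chain of hereditary overorders, so it is essentially the paper's argument spelled out in detail. The local-to-global caveat you raise about the applicability of \cite[theorem~1.14]{MR978602} is reasonable, but the paper invokes that theorem in the same global form, so you are not assuming anything beyond what the authors do.
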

We are now ready to prove the main result for hereditary orders on quasiprojective curves.
\begin{theorem}
  \label{theorem:quasiprojective-curve}
  Let~$\mathcal{A}$ be a sheaf of hereditary~$\mathcal{O}_C$\dash orders in a central simple~$k(C)$\dash algebra~$A$. Let~$\overline{\mathcal{A}}$ be a maximal order containing~$\mathcal{A}$. Then
  \begin{equation}
    \begin{aligned}
      \CH_0^\Delta(C,\mathcal{A})&\cong\Cl(\overline{\mathcal{A}})\oplus\mathbb{Z}^{\oplus\rho} \\
      \CH_1^\Delta(C,\mathcal{A})&\cong\mathbb{Z}
    \end{aligned}
  \end{equation}
  where~$\rho\coloneqq\sum_{p\in C_{(0)}}(r_p-1)$.

  \begin{proof}
    By~\cite[proposition~2.1]{MR1825805} we obtain that
    \begin{equation}
      \Kzero(\mathcal{A})\cong\Kzero(\overline{\mathcal{A}})\oplus\mathbb{Z}^{\oplus\rho}.
    \end{equation}
    Now we apply \cref{corollary:reduced-projective-class-group-projective} to conclude.
  \end{proof}
\end{theorem}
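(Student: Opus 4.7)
The top Chow group is immediate from \cref{corollary:highest-chow-group} since $\dim C = 1$. The substance of the theorem is the computation of $\CH_0^\Delta(C,\mathcal{A})$, which I plan to carry out in three steps.

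First, I invoke \cref{corollary:reduced-projective-class-group-projective} to rewrite
\[
\CH_0^\Delta(C, \mathcal{A}) \cong \ker\bigl(K_0(\mathcal{A}) \twoheadrightarrow K_0(A) \cong \mathbb{Z}\bigr),
\]
thereby reducing the problem to the computation of this kernel. Second, I apply \cref{proposition:correct-reiten-vandenbergh} to obtain the splitting $K_0(\mathcal{A}) \cong K_0(\overline{\mathcal{A}}) \oplus \mathbb{Z}^{\oplus \rho}$ with $\rho = \sum_{p \in C_{(0)}}(r_p - 1)$. Unwinding its construction, the $\mathbb{Z}^{\oplus \rho}$ summand is generated by classes of finite-length $\mathcal{A}$-modules supported at those closed points where $\mathcal{A}$ is properly contained in $\overline{\mathcal{A}}$, coming from the successive quotients in a chain $\mathcal{A} \subsetneq \cdots \subsetneq \overline{\mathcal{A}}$ of hereditary orders; such classes manifestly die under extension of scalars to the generic point. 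Simultaneously, the restriction of $K_0(\mathcal{A}) \to K_0(A)$ to the $K_0(\overline{\mathcal{A}})$ summand is exactly the generic-fibre map for the maximal order.

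Third, I identify $\ker\bigl(K_0(\overline{\mathcal{A}}) \twoheadrightarrow K_0(A)\bigr)$ with $\Cl(\overline{\mathcal{A}})$. In the affine Dedekind case this is \cite[theorem~36.3]{MR0393100}; globally one runs the same Bass--Tate localization-sequence argument that underlies \cref{corollary:reduced-projective-class-group-affine}, observing that for the maximal order $\overline{\mathcal{A}}$ the local reduced class groups $\widetilde{K_0}(\overline{\mathcal{A}}_p)$ all vanish by \cite[theorem~18.7]{MR0393100}, so that the short exact sequence analogous to \eqref{equation:SES-Cl-K0red} collapses to an isomorphism $\Cl(\overline{\mathcal{A}}) \cong \widetilde{K_0}(\overline{\mathcal{A}})$. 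Assembling the three steps yields the claimed decomposition.

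The main obstacle I anticipate is the bookkeeping in the second step: explicitly exhibiting generators of the $\mathbb{Z}^{\oplus \rho}$ summand that manifestly vanish after restriction to the generic point, rather than merely knowing that the abstract splitting exists. Once this is pinned down, step three is a routine globalization of the Dedekind case and the final assembly is a diagram chase on the kernel of a direct sum.
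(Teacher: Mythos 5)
Your plan follows the paper's proof exactly: the paper likewise combines \cref{corollary:reduced-projective-class-group-projective} with the splitting $\Knought(\mathcal{A})\cong\Knought(\overline{\mathcal{A}})\oplus\mathbb{Z}^{\oplus\rho}$ of \cref{proposition:correct-reiten-vandenbergh} (quoted there as \cite[proposition~2.1]{MR1825805}) and the identification $\Cl(\overline{\mathcal{A}})\cong\widetilde{\Knought}(\overline{\mathcal{A}})$ for maximal orders. The compatibility checks you flag in steps two and three (that the $\mathbb{Z}^{\oplus\rho}$ summand dies at the generic point, and that the affine statement of \cite[theorem~36.3]{MR0393100} globalizes) are left implicit in the paper, so spelling them out is a refinement rather than a divergence.
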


We now discuss some situation in which these Chow groups can be described more explicitly, which reduces to having an explicit description of the ideal class group of a maximal order in this geometric setting.

\begin{corollary}
  \label{corollary:quasiprojective-curve-matrix}
  Let~$k$ be algebraically closed. Then for every~$\mathcal{A}$ as in \cref{theorem:quasiprojective-curve} we have that
  \begin{equation}
    \CH_0^\Delta(C,\mathcal{A})\cong\Pic C\oplus\mathbb{Z}^{\oplus\rho}.
  \end{equation}
  If~$k$ is not algebraically closed the same description holds as long as~$A\cong\Mat_n(k(C))$.

  \begin{proof}
    By Tsen's theorem we know that~$\Br(k(C))=0$, so~$A\cong\Mat_n(k(C))$. The maximal orders in~$A$ are all of the form~$\End_X(\mathcal{E})$ for~$\mathcal{E}$ a vector bundle of rank~$n$, and by Morita theory we can conclude because~$\Kzero(\overline{\mathcal{A}})\cong\Kzero(\mathcal{O}_C)\cong\Pic(C)\oplus\mathbb{Z}$.
  \end{proof}
\end{corollary}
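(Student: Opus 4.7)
The plan is to reduce this to a computation of $\Knought(\overline{\mathcal{A}})$ for some convenient maximal order $\overline{\mathcal{A}}\supseteq\mathcal{A}$, and then invoke \cref{theorem:quasiprojective-curve}. First I would observe that once $A\cong\Mat_n(k(C))$ (which is automatic when $k$ is algebraically closed by Tsen's theorem, hence $\Br(k(C))=0$), every maximal $\mathcal{O}_C$-order in $A$ is Morita-equivalent to $\mathcal{O}_C$: concretely, such a maximal order can be written as $\End_{\mathcal{O}_C}(\mathcal{E})$ for some vector bundle $\mathcal{E}$ of rank $n$ on $C$. This is the step I expect to require the most care, since one has to justify that all maximal orders inside a matrix algebra over the function field of a Dedekind-like base arise this way; the standard argument is to pick any maximal order, view it locally as an endomorphism ring of a full lattice, and patch these lattices to a global vector bundle, using that $C$ is a regular curve.

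Given this, Morita invariance of algebraic K-theory gives $\Knought(\overline{\mathcal{A}})\cong\Knought(\mathcal{O}_C)$. For a smooth (quasi-)projective curve the rank–determinant map yields a split decomposition $\Knought(\mathcal{O}_C)\cong\Pic(C)\oplus\mathbb{Z}$, where the $\mathbb{Z}$\dash factor corresponds to the rank and the $\Pic(C)$ factor to classes of degree-zero line bundles combined with the reduced rank. Tracing this through the map $\Knought(\overline{\mathcal{A}})\twoheadrightarrow\Knought(A)\cong\mathbb{Z}$ used to define $\widetilde{\Knought}(\overline{\mathcal{A}})$, and remembering that $\Cl(\overline{\mathcal{A}})\cong\widetilde{\Knought}(\overline{\mathcal{A}})$ for a maximal order by \cite[theorem~36.3]{MR0393100} (equivalently, by the identification stated right after \eqref{equation:SES-Cl-K0red}), I would conclude $\Cl(\overline{\mathcal{A}})\cong\Pic(C)$.

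Substituting this isomorphism into the conclusion of \cref{theorem:quasiprojective-curve} produces $\CH_0^\Delta(C,\mathcal{A})\cong\Pic(C)\oplus\mathbb{Z}^{\oplus\rho}$, which is the desired formula. For the second assertion (when $k$ is not algebraically closed but we assume $A\cong\Mat_n(k(C))$), the argument is identical, because the only place algebraic closedness was used was to force the split form of $A$ via Tsen; the classification of maximal orders and the Morita step do not require it.
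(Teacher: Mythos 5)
Your proposal is correct and follows essentially the same route as the paper: Tsen's theorem to split $A$, the identification of maximal orders as $\End_{\mathcal{O}_C}(\mathcal{E})$ for a rank-$n$ bundle $\mathcal{E}$, Morita invariance giving $\Knought(\overline{\mathcal{A}})\cong\Knought(\mathcal{O}_C)\cong\Pic(C)\oplus\mathbb{Z}$, and then \cref{theorem:quasiprojective-curve}. You simply spell out in more detail the lattice-patching justification and the passage from $\widetilde{\Knought}(\overline{\mathcal{A}})$ to $\Cl(\overline{\mathcal{A}})\cong\Pic(C)$, which the paper leaves implicit.
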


\begin{remark}
  It would be interesting to develop the notion of functoriality for relative tensor triangular Chow groups, as was done for the non-relative case in~\cite{MR3423452}. One example would be the observation that the functor
  \begin{equation}
    -\otimes_R\Mat_n(R)\colon R\mhyphen\mathrm{mod}\to\Mat_n(R)\mhyphen\mathrm{mod}
  \end{equation}
  induces multiplication by~$n$ on the level of Grothendieck groups. In more general settings (e.g.\ inclusions of orders) one expects similar interesting behaviour.
\end{remark}

If~$k$ is not algebraically closed we have an inclusion
\begin{equation}
  \label{equation:brauer-inclusion}
  \Br C\hookrightarrow\Br k(C)
\end{equation}
sending an Azumaya algebra to the central simple algebra at the generic point of~$C$. In the special case of~$C=\mathbb{P}_k^1$ we moreover have that~$\Br(\mathbb{P}_k^1)\cong\Br(k)$.

If the class of the central simple~$k(C)$\dash algebra~$\mathcal{A}_\eta$ in the Brauer group~$\Br(k(C))$ actually comes from~$\Br(C)$ in the inclusion~\eqref{equation:brauer-inclusion} we say that it is \emph{unramified}. Because~$C$ is nonsingular of dimension~1 we have that every maximal order in the unramified central simple algebra~$\mathcal{A}_\eta$ is actually an Azumaya algebra~\cite{MR3461057,MR0121392}, and we can describe the Chow groups up to \emph{controlled} torsion. The situation of \cref{corollary:quasiprojective-curve-matrix} is a special case of this where the Azumaya algebra is split, where~$n=1$.
\begin{corollary}
  \label{corollary:quasiprojective-curve-azumaya}
  Let~$\mathcal{A}$ be an hereditary order as in \cref{theorem:quasiprojective-curve} such that~$\mathcal{A}_\eta$ is an unramified central simple~$k(C)$\dash algebra, and denote~$\rho=\sum(e_i-1)$. Let~$n$ be the degree of~$\mathcal{A}_\eta$ over~$k(C)$. Then
  \begin{equation}
    \CH_0^\Delta(C,\mathcal{A})\otimes_{\mathbb{Z}}\mathbb{Z}[1/n]\cong\left( \Pic C\oplus\mathbb{Z}^{\oplus\rho} \right)\otimes_{\mathbb{Z}}\mathbb{Z}[1/n].
  \end{equation}

  \begin{proof}
    Denote by~$\overline{\mathcal{A}}$ any maximal order containing~$\mathcal{A}$. By the assumptions it is necessarily an Azumaya algebra.

    Using~\cite[corollary~1.2]{MR3056551} we have that there exists an isomorphism
    \begin{equation}
      \label{equation:CH-maximal-up-to-torsion}
      \Kzero(C)\otimes_{\mathbb{Z}}\mathbb{Z}[1/n]\cong\Kzero(\overline{\mathcal{A}})\otimes_{\mathbb{Z}}\mathbb{Z}[1/n],
    \end{equation}
    and by \cref{theorem:quasiprojective-curve} we can conclude.
  \end{proof}
\end{corollary}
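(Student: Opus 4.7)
The plan is to combine \cref{theorem:quasiprojective-curve} with the K-theoretic input \eqref{equation:CH-maximal-up-to-torsion} from \cite{MR3056551} and to reduce the computation of~$\Cl(\overline{\mathcal{A}})$ to the computation of~$\Pic C$ after tensoring with~$\mathbb{Z}[1/n]$. Concretely, I would first apply \cref{theorem:quasiprojective-curve} to a maximal order~$\overline{\mathcal{A}}$ containing~$\mathcal{A}$ to rewrite the left-hand side as~$(\Cl(\overline{\mathcal{A}})\oplus\mathbb{Z}^{\oplus\rho})\otimes_{\mathbb{Z}}\mathbb{Z}[1/n]$, so that all that remains is to show
\[
  \Cl(\overline{\mathcal{A}})\otimes_{\mathbb{Z}}\mathbb{Z}[1/n]\cong\Pic(C)\otimes_{\mathbb{Z}}\mathbb{Z}[1/n].
\]

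The unramified hypothesis on~$\mathcal{A}_\eta$ guarantees, as recalled just before the statement, that the maximal order~$\overline{\mathcal{A}}$ is actually an Azumaya algebra of degree~$n$ over~$C$. This is the hypothesis needed to invoke \cite[corollary~1.2]{MR3056551}, which produces the isomorphism~$\Knought(\overline{\mathcal{A}})\otimes\mathbb{Z}[1/n]\cong\Knought(C)\otimes\mathbb{Z}[1/n]$ from~\eqref{equation:CH-maximal-up-to-torsion}. Next I would recall that for the maximal order~$\overline{\mathcal{A}}$ the isomorphism~$\Cl(\overline{\mathcal{A}})\cong\widetilde{\Knought}(\overline{\mathcal{A}})$ of \cite[theorem~36.3]{MR0393100} identifies the ideal class group with the kernel of the (split) surjection~$\Knought(\overline{\mathcal{A}})\twoheadrightarrow\Knought(\mathcal{A}_\eta)\cong\mathbb{Z}$, so that
\[
  \Cl(\overline{\mathcal{A}})\otimes_{\mathbb{Z}}\mathbb{Z}[1/n]\cong\ker\bigl(\Knought(\overline{\mathcal{A}})\otimes\mathbb{Z}[1/n]\to\Knought(\mathcal{A}_\eta)\otimes\mathbb{Z}[1/n]\bigr).
\]

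To finish I would check that this kernel matches, under the isomorphism of \cite{MR3056551}, the corresponding kernel on~$C$. The relevant square is the generic fibre square
\[
  \begin{tikzcd}
    \Knought(\overline{\mathcal{A}})\otimes\mathbb{Z}[1/n] \arrow{r}{\sim} \arrow{d} & \Knought(C)\otimes\mathbb{Z}[1/n] \arrow{d} \\
    \Knought(\mathcal{A}_\eta)\otimes\mathbb{Z}[1/n] \arrow{r}{\sim} & \Knought(k(C))\otimes\mathbb{Z}[1/n]
  \end{tikzcd}
\]
whose lower horizontal arrow is an isomorphism because the base change~$\Knought(k(C))\to\Knought(\mathcal{A}_\eta)$ is multiplication by~$n$ on~$\mathbb{Z}$, which becomes invertible after inverting~$n$. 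For a smooth curve the rank-and-determinant decomposition~$\Knought(C)\cong\Pic(C)\oplus\mathbb{Z}$ identifies the kernel of the right-hand vertical map with~$\Pic(C)$, hence the kernel on the left is~$\Pic(C)\otimes\mathbb{Z}[1/n]$, yielding the claimed description of~$\CH_0^\Delta(C,\mathcal{A})\otimes\mathbb{Z}[1/n]$.

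The only nontrivial point in the plan is the compatibility of the isomorphism of \cite{MR3056551} with generic restriction, i.e.\ the commutativity of the square above; granted this, everything else is bookkeeping combining \cref{theorem:quasiprojective-curve}, the maximality identification~$\Cl\cong\widetilde{\Knought}$, and the standard decomposition of~$\Knought$ of a smooth curve. This compatibility is routine since the isomorphism of \cite{MR3056551} is natural in flat base change and localisation at the generic point is flat, so I do not expect a serious obstacle there.
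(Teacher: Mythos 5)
Your proof is correct and follows essentially the same route as the paper: identify the maximal order~$\overline{\mathcal{A}}$ as an Azumaya algebra, invoke \cite[corollary~1.2]{MR3056551}, and conclude via \cref{theorem:quasiprojective-curve}; you merely spell out the bookkeeping with~$\Cl(\overline{\mathcal{A}})\cong\widetilde{\Knought}(\overline{\mathcal{A}})$ and the rank--determinant decomposition of~$\Knought(C)$ that the paper leaves implicit. One tiny imprecision: the base-change map~$\Knought(k(C))\to\Knought(\mathcal{A}_\eta)$ is multiplication by~$m$, where~$\mathcal{A}_\eta\cong\Mat_m(D)$, rather than by~$n$; since~$m$ divides~$n$ this does not affect your conclusion.
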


\begin{remark}
  In this case op.\ cit.\ gives that the map induced on~$\Kzero$ by~$-\otimes_{\mathcal{O}_C}\mathcal{A}$ has torsion (co)kernel of exponent~$n^4$.
\end{remark}

\paragraph{Maximal orders on surfaces}
There is another invariant in the literature which is a special case of relative Chow groups for orders~\cite[\S3.7]{artin-dejong}. In op.~cit.\ these are defined for a (terminal) maximal order~$\mathcal{A}$ on a (smooth) projective surface~$X$ over an algebraically closed field~$k$.  Here we don't need a precise definition of a terminal maximal order, only that it has finite global dimension~\cite[corollary~3.3.5]{artin-dejong}. Less explicitly so, they have also been defined in a more specific setting in \cite{MR1880659}. In both cases this intersection theory for sheaves of orders is used to show that the center of a quadratic Artin--Schelter regular algebra which is finite over its center is necessarily~$\mathbb{P}^2$.

Comparing definitions, we have that the filtration obtained by the tensor action is the same as the filtration by dimension of support \cref{corverdiervsserre} on the abelian level, which is precisely the filtration used in op.\ cit. They define a divisor group for~$\mathcal{A}$, and as the filtrations are the same we see that
\begin{equation}
  \Div(\mathcal{A})\cong\Cyc_1^\Delta(X,\mathcal{A}).
\end{equation}
Moreover, they define a group~$\mathrm{G}_1(\mathcal{A})$ (not to be confused with higher K-theory of coherent sheaves), using the localization sequence \eqref{equation:chow-as-cokernel}, as the two-dimensional analogue of the reduced projective class group. In particular, combining \eqref{equation:chow-as-cokernel} and~\cite[proposition~3.7.8]{artin-dejong} we have that
\begin{equation}
  \mathrm{G}_1(\mathcal{A})\cong\CH_1^\Delta(X,\mathcal{A}).
\end{equation}
Moreover, in~\cite[proposition~3.7.12]{artin-dejong} an explicit description of~$\mathrm{G}_1(\mathcal{A})$ (and hence the codimension-one Chow group) is given in their situation as
\begin{equation}
  0\to k(X)^\times/\det D^\times\to\CH_1^\Delta(X,\mathcal{A})\to\Pic X\to 0
\end{equation}
where~$D$ is the division algebra over~$k(X)$ Morita equivalent to~$\mathcal{A}_\eta$.

\begin{remark}
  A point not addressed here is the relationship between relative tensor triangular Chow groups for hereditary orders on smooth quasiprojective curves and various Chow groups for ``orbifold curves''. By~\cite{MR2018958} there exists a correspondence between these when working over an algebraically closed base field of characteristic zero. Observe that by~\cite{MR1005008} the Chow groups of the orbifold curve are (up to torsion) the same as the Chow groups of the coarse moduli space. Hence the relative tensor triangular Chow groups of an hereditary order on a smooth quasiprojective curve are different from the Chow groups of its associated orbifold curve, because the stackiness shows up as copies of~$\mathbb{Z}$ and not as torsion.

  This raises at least two questions:
  \begin{enumerate}
    \item is there a purely commutative (relative) setup that recovers the relative Chow groups of the order from the orbifold curve?
    \item is there an analogue of~\cite{MR3423452} identifying the Chow group defined by Vistoli with the tensor triangular Chow group of its derived category?
  \end{enumerate}
\end{remark}

\subsection{Chow groups of (integral) group rings}
\label{subsection:group-rings}
In this section we consider the situation where the scheme~$X$ is~$\Spec R$ for a Dedekind domain~$R$, and the coherent~$\mathcal{O}_X$\dash algebra is given by (the sheafification of) the integral group ring~$RG$, for a finite group~$G$ of order~$n$. Observe that in this situation the global dimension of~$RG$ is often infinite. Especially the case where~$R$ is the ring of integers in an algebraic number field is interesting, where it combines the representation theory of finite groups and algebraic number theory.

As in \cref{subsection:chow-groups} we obtain that we can express in the relative tensor triangular Chow groups in terms of classical invariants, see \cref{theorem:degree-zero-integral-group-ring}.

If we denote~$K$ the field of fractions of~$R$, then we will relax \cref{definition:order} by allowing~$KG$ to be a separable~$K$\dash algebra. By Maschke's theorem this will be the case if the characteristic of~$K$ does not divide~$n$ and~$K$ is a perfect field. We will assume this throughout, and it is of course satisfied in the case where~$K$ is an algebraic number field.

By the Artin--Wedderburn decomposition theorem we have that~$KG$ has a direct product decomposition
\begin{equation}
  KG\cong\prod_{i=1}^t\Mat_{n_i}(D_i)
\end{equation}
whose factors are matrix rings over division rings over~$K$. In particular we allow the conditions in \cref{definition:order} to be relaxed in two directions: we can have multiple factors, and the division algebras can have centers which are larger than~$K$.

This allows us to describe the top degree cycle and Chow groups.

\begin{theorem}
  \label{theorem:top-chow-group-group-ring}
  Let~$R$ be a Dedekind domain such that~$RG$ defines an order in~$KG$. Then
  \begin{equation}
    \Cyc_1^\Delta(R,RG)\cong\CH_1^\Delta(R,RG)\cong\mathbb{Z}^{\oplus t}
  \end{equation}
  where~$t$ is the number of simple factors in the Artin--Wedderburn decomposition of~$KG$.

  \begin{proof}
    This is a straightforward generalisation of \cref{proposition:highest-cycle-group,proposition:highest-chow-group}, taking the more general notion of order into account.
  \end{proof}
\end{theorem}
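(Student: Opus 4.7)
The plan is to adapt the arguments of Proposition~\ref{proposition:highest-cycle-group} and Corollary~\ref{corollary:highest-chow-group} to the group-ring setting; the only new ingredient is that the generic stalk~$KG$ is no longer a central simple algebra but a product of~$t$ simple factors via Artin--Wedderburn.

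First I would compute the cycle group. Since~$\Spec R$ has dimension one, its unique point of dimension one is the generic point~$\eta$, with stalk~$(RG)_{\eta} = KG$. The first assertion of Corollary~\ref{corsplitcycles} then gives
\[
  \Cyc_1^{\Delta}(R, RG) \cong \Knought\bigl( \mathrm{D^{perf}_{fl.}}(KG) \bigr).
\]
The separability hypothesis ensures that~$KG$ is semisimple, hence of global dimension zero; moreover every finitely generated $KG$-module has finite length. It follows that~$\mathrm{D^{perf}_{fl.}}(KG) = \mathrm{D^b}(KG\mathrm{-fl.})$, and hence~$\Knought(\mathrm{D^{perf}_{fl.}}(KG)) = \Knought(KG\mathrm{-fl.})$. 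The Artin--Wedderburn decomposition yields exactly~$t$ isomorphism classes of simple right $KG$-modules, and the standard induction on composition length used in the proof of Theorem~\ref{theorem:cycle-groups} gives~$\Knought(KG\mathrm{-fl.}) \cong \mathbb{Z}^{\oplus t}$.

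Next, for the Chow group, I would repeat the argument of Corollary~\ref{corollary:highest-chow-group}: with~$p = n = 1$ we have~$\mathcal{K}_{(1)} = \mathcal{K}_{(2)} = \mathcal{K}$, so the inclusion~$\mathcal{K}^c_{(1)} \hookrightarrow \mathcal{K}^c_{(2)}$ is the identity and the map~$i$ in diagram~\eqref{eqncommdiagsubcats} is an isomorphism. Therefore~$\ker(i) = 0$, which forces~$q^{\natural}(\ker(i)) = 0$, and we conclude~$\CH_1^{\Delta}(R, RG) = \Cyc_1^{\Delta}(R, RG) \cong \mathbb{Z}^{\oplus t}$.

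I do not anticipate a serious obstacle. The only subtlety worth highlighting is that~$RG$ itself typically has infinite global dimension, so Theorem~\ref{theorem:cycle-groups} does not apply directly; it is essential to use the \emph{stalkwise} first assertion of Corollary~\ref{corsplitcycles} and exploit that the relevant stalk~$KG$ is of finite global dimension.
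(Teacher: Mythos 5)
Your proof is correct and follows essentially the same route as the paper, which simply declares the result a straightforward generalisation of Proposition~\ref{proposition:highest-cycle-group} and Corollary~\ref{corollary:highest-chow-group}; you carry out that generalisation explicitly via the stalkwise description in Corollary~\ref{corsplitcycles} at the generic point and the vanishing of~$\ker(i)$ in top dimension. Your closing caveat is well taken: since~$RG$ typically has infinite global dimension, one must indeed invoke the local statement of Corollary~\ref{corsplitcycles} rather than Theorem~\ref{theorem:cycle-groups} globally, a point the paper glosses over.
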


An easy example of the dependence on the field of fractions is given by considering the group rings~$\mathbb{Z}\Cyc_p$ and~$\mathbb{Z}[\zeta_p]\Cyc_p$, for a cyclic group of prime order~$p\geq 3$, where~$\zeta_p$ is a primitive~$p$th root of unity.
\begin{example}
  \label{example:cyclic-group-rings}
  We have that~$\mathbb{Q}\Cyc_p\cong\mathbb{Q}\times\mathbb{Q}(\zeta_p)$, so
  \begin{equation}
    \label{equation:chow-group-cyclic-group}
    \CH_1^\Delta(\mathbb{Z},\mathbb{Z}\Cyc_p)\cong\mathbb{Z}^{\oplus2}.
  \end{equation}
  On the other hand~$\mathbb{Q}(\zeta_p)\Cyc_p\cong\prod_{i=0}^{p-1}\mathbb{Q}(\zeta_p)$, hence
  \begin{equation}
    \CH_1^\Delta(\mathbb{Z}[\zeta_p],\mathbb{Z}[\zeta_p]\Cyc_p)\cong\mathbb{Z}^{\oplus p}.
  \end{equation}
\end{example}
\begin{remark}
  More generally we have that the integral group ring~$\mathbb{Z}G$ considered as a sheaf of algebras over~$\Spec\mathbb{Z}$ has highest Chow group
  \begin{equation}
    \CH_1^\Delta(\mathbb{Z},\mathbb{Z}G)\cong\mathbb{Z}^t
  \end{equation}
  where~$t$ is the number of conjugacy classes of cyclic subgroups of~$G$~\cite[corollary~13.1.2]{MR0450380}.
\end{remark}

For the zero-dimensional Chow groups we obtain a result similar to \cref{corollary:reduced-projective-class-group-affine}. We will not cover the zero-dimensional cycle groups explicitly: there is no uniform description possible but the techniques of \cref{theorem:top-chow-group-group-ring} go through.
\begin{theorem}
  \label{theorem:degree-zero-integral-group-ring}
  Let~$R$ be a Dedekind domain such that~$RG$ defines an order in~$KG$. Then
  \begin{equation}
    \CH_0^\Delta(R,RG)\cong\widetilde{\Kzero}(RG)\cong\Cl RG.
  \end{equation}

  \begin{proof}
    The first isomorphism follows from \cref{propchowexseq}. The second isomorphism is~\cite[remarks~49.11(iv)]{MR892316}.
  \end{proof}
\end{theorem}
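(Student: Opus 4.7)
The plan is to deduce this as a direct analogue of \cref{corollary:reduced-projective-class-group-affine}, after noting that the same arguments apply in the slightly relaxed order setting of this section (where $KG$ is merely separable over $K$, rather than central simple). As a preliminary sanity check I would verify that $RG$ fits the framework of \cref{sectionrelgroupsalg}: since $R$ is Dedekind, $\Spec R$ is noetherian, separated, regular, and one-dimensional, and $RG$ is a coherent, torsion-free $\mathcal{O}_{\Spec R}$-algebra because $G$ is finite.

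Next I would specialise the exact sequence of \cref{propchowexseq} to $\mathcal{T} = \mathrm{D}(\Qcoh\mathcal{O}_{\Spec R})$, $\mathcal{K} = \mathrm{D}(\Qcoh RG)$ and $p=0$. Since $\dim\Spc(\mathcal{T}^c) = 1$ we have $\mathcal{K}_{(-1)} = 0$ and $\mathcal{K}_{(1)} = \mathcal{K}$, so the middle term collapses to
\[
  \Knought(\mathcal{K}^c) \;=\; \Knought(\mathrm{D}^{\mathrm{perf}}(RG)) \;=\; \Knought(RG),
\]
while the right-hand term $\ZZ_1^\Delta(R,RG)$ is identified with $\Knought(KG)$ by the argument used to prove \cref{theorem:top-chow-group-group-ring}, applied factor-by-factor to the Artin--Wedderburn decomposition of $KG$. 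Unwinding \cref{corsplitcycles} one sees that the map in the sequence is the generic-fibre morphism $\Knought(RG)\to\Knought(KG)$ induced by $-\otimes_R K$; its kernel is $\widetilde{\Knought}(RG)$ by the very definition of the reduced projective class group. This gives the first isomorphism $\CH_0^\Delta(R,RG)\cong\widetilde{\Knought}(RG)$.

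The second isomorphism $\widetilde{\Knought}(RG)\cong\Cl RG$ is where the group-ring setting genuinely improves upon the general order setting of \cref{corollary:reduced-projective-class-group-affine}: in the short exact sequence \eqref{equation:SES-Cl-K0red} each local term $\widetilde{\Knought}(R_\mathfrak{p}G)$ vanishes, so the inclusion $\Cl RG\hookrightarrow\widetilde{\Knought}(RG)$ is already an equality. I would take this as the statement of \cite[remarks~49.11(iv)]{MR892316}, which is classical (it ultimately rests on the semiperfectness of the semilocal ring $R_\mathfrak{p}G$ and Swan's theorem that every stably free $R_\mathfrak{p}G$-module is free). The only genuine subtlety—really a bookkeeping point rather than an obstacle—is to confirm that \cref{propchowexseq} and the cycle-group computation \cref{theorem:top-chow-group-group-ring} both go through without change when the generic fibre is only assumed semisimple rather than central simple, and they do, because the central-simple hypothesis was only ever used through $\Knought$ of the generic fibre, which behaves equally well under the Wedderburn decomposition.
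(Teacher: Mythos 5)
Your argument is correct and follows the same route as the paper: the first isomorphism is exactly the specialisation of \cref{propchowexseq} to $p=0$ over a Dedekind base (with the middle term $\Knought(\mathrm{D}^{\mathrm{perf}}(RG))=\Knought(RG)$ and $\pi$ identified with the generic-fibre map via \cref{propstalkquot} and \cref{lmasplitcomp}, none of which needs finite global dimension or a central simple generic fibre), and the second isomorphism is the cited classical fact that the local reduced projective class groups of $R_{\mathfrak{p}}G$ vanish. The paper merely states these two steps without the unwinding you supply, so there is nothing to add.
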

The second isomorphism is indeed somewhat special to the situation of group rings: for an hereditary order~$\Lambda$ we had that~$\Cl\Lambda\cong\Cl\Lambda'$ if~$\Lambda\subseteq\Lambda'$ is an inclusion of orders, reducing the computation of the class group to that of a maximal order. To compute the class group of a group ring, observe that~$RG$ is maximal if and only if it is hereditary, which happens if and only if~$n\in R^\times$~\cite[theorem~41.1]{MR0393100}.

Moreover, the inclusion of~$RG$ into a maximal order~$\Lambda'$ usually only induces an epimorphism of class groups. In particular one obtains a short exact sequence
\begin{equation}
  0\to\derived(RG)\to\Cl(RG)\cong\widetilde{K}_0(RG)\to\Cl(\Lambda')\to 0
\end{equation}
as in~\cite[(49.33)]{MR892316}, independent of the choice of~$\Lambda'$.

In the case where~$R$ is the ring of integers in an algebraic number field, we get by the Jordan--Zassenhaus theorem that~$\Cl RG$ (and therefore~$\CH_0^\Delta(R,RG)$) is a finite abelian group, generalising the theory of class groups and class numbers of~$R$ to the situation of group rings. This is significantly different from the situation for hereditary orders, where the inclusion in a maximal order was responsible for copies~$\mathbb{Z}$ in the Chow groups. More information and some explicit expressions can be found in~\cite{MR0175935,MR0404410}.

To end this discussion we give some examples of explicit computations of~$\Cl\mathbb{Z}G$.

\begin{example}
  If one considers the situation of \cref{example:cyclic-group-rings}, then the (necessarily unique) maximal order in~$\mathbb{Q}\times\mathbb{Q}(\zeta_p)$ is~$\mathbb{Z}\times\mathbb{Z}[\zeta_p]$, and we~\cite[theorem~50.2]{MR892316} we obtain the following
  \begin{equation}
    \CH_0^\Delta(\mathbb{Z},\mathbb{Z}\Cyc_p)\cong\Cl(\mathbb{Z}[\zeta_p]).
  \end{equation}
  The order of this group is the class number of the cyclotomic field~$\mathbb{Q}(\zeta_p)$. For example if~$p=23$ then~$\CH_0^\Delta(\mathbb{Z},\mathbb{Z}\Cyc_{23})\cong\mathbb{Z}/3\mathbb{Z}$.
\end{example}

Using the class numbers of cyclotomic fields it is possible to give a complete classification of the finite abelian groups for which~$\Cl(\mathbb{Z}G)$ (and therefore~$\CH_0^\Delta(\mathbb{Z},\mathbb{Z}G)$) is zero: by~\cite[corollary~50.17]{MR892316} this is only the case if~$G$ is cyclic of order~$\leq 11$, cyclic of order~$13,14,17,19$ or the Klein group of order~4.

\subsection{Chow groups in the singular case}
Finally we discuss a single example where the base is singular, but the order is a noncommutative resolution and in particular has finite global dimension. Observe that this case is covered by the general results in \cref{subsection:main-result}. By no means is this a complete discussion, it is given to suggest possible future research.

We will work in the setting of~\cite[remark~2.7]{MR2854109}. Consider
\begin{equation}
  \begin{aligned}
    R_1\coloneqq k[[x,y]]/(xy),
    R_2\coloneqq k[[x,y]]/(y^2-x^3)
  \end{aligned}
\end{equation}
which are the complete local rings for the nodal (resp.~cuspidal) curve singularity, with maximal ideals~$\mathfrak{m}_i$. Denote their normalizations by~$\widetilde{R}_i$. Then the \emph{Auslander order} is introduced in op.~cit., and it is given by
\begin{equation}
  A_i\coloneqq
  \begin{pmatrix}
    \widetilde{R}_i & \mathfrak{m}_i \\
    \widetilde{R}_i & R_i
  \end{pmatrix}
\end{equation}
It can be seen that these orders have~3 (resp.~2) simple modules, in particular we get the following description of the cycle groups in dimension~0
\begin{equation}
  \begin{aligned}
    \Kzero(A_1\mhyphen\fl)&\cong\mathbb{Z}^{\oplus3}, \\
    \Kzero(A_2\mhyphen\fl)&\cong\mathbb{Z}^{\oplus2}.
  \end{aligned}
\end{equation}

\paragraph{Acknowledgements} We would like to thank Greg Stevenson for interesting discussions, and the referee for remarks which helped to improve the paper. We would like to thank J\o rgen Rennemo for pointing out an inaccuracy in the original statement of \cref{lmacapsubcats}.

Pieter Belmans was supported by a Ph.D. fellowship of the Research Foundation--Flanders (FWO). Sebastian Klein was supported by the ERC grant no.\ 257004--HHNcdMir.

\bibliographystyle{plain}
\bibliography{mr,arxiv,bibliography}

\end{document}